\numberwithin{equation}{section}
\newtheorem{theorem}{Theorem}[section]
\newtheorem{lemma}[theorem]{Lemma}
\newtheorem{prop}[theorem]{Proposition}
\newtheorem{conj}[theorem]{Conjecture}
\theoremstyle{definition}
\newtheorem{remark}[theorem]{Remark}
\crefname{equation}{equation}{equations}
\crefname{prop}{Proposition}{Propositions}
\newcommand{\R}{\mathbb{R}}
\newcommand{\C}{\mathbb{C}}
\newcommand{\Q}{\mathbb{Q}}
\newcommand{\Z}{\mathbb{Z}}
\newcommand{\SL}{\mathrm{SL}}
\newcommand{\tr}{\mathrm{tr}}
\newcommand{\dist}{\mathrm{dist}}
\newcommand{\n}[1]{\left\Vert #1\right\Vert }
\newcommand{\spann}{\mathrm{span}}
\newcommand{\wt}{\widetilde}
\title{Lifting all elements in $\mathrm{SL}_n(\mathbb{Z}/q\mathbb{Z})$}
\subjclass[2020]{Primary: 11F06; Secondary: 11J25, 20H05}
\keywords{Arithmetic groups, optimal lifting, strong approximation, Bohr sets, power residues}
\author{Amitay Kamber and P\'eter P. Varj\'u}
\thanks{The authors have received funding from the European Research Council (ERC) under the European Union’s Horizon 2020 research and innovation programme (grant agreement No. 803711).
PV was supported by The Royal Society.}
\address{Centre for Mathematical Sciences, Wilberforce Road, Cambridge CB3 0WB, UK.}
\email{amitayka@gmail.com}
\email{pv270@dpmms.cam.ac.uk}
\begin{document}

\begin{abstract}
We show that every element of $\mathrm{SL}_{n}(\mathbb{Z}/q\mathbb{Z})$ can be lifted to an element of $\mathrm{SL}_{n}(\mathbb{Z})$ of norm at most $Cq^2\log q$, while there exists an element such that every lift of it is of norm at least $q^{2+o(1)}$. This should be compared to the recent result that almost every element has a lift of norm bounded by $q^{1+1/n+o(1)}$.

The main step in the proof is showing that for every $q$, there is a small element in $(\mathbb{Z}/q\mathbb{Z})^\times$ with a large $n$-th root, which is a result of independent interest.
We use tools from additive combinatorics including Bohr sets
in the proof.
\end{abstract}
\maketitle

\section{Introduction}

Let $n\ge2$ be a natural number, and let $q\to\infty$ among integers. We treat $n$ as fixed throughout this work, and all the constants may depend on it.

By strong approximation, the modulo $q$ map, $\pi_{q}\colon\SL_{n}(\Z)\to\SL_{n}(\Z/q\Z)$ is onto. We study this map as follows. Let $\n{\cdot}\colon\SL_{n}(\Z)\to\R_{\ge0}$ be the operator norm. Given $T>0$, the set 
\[
F_T:=\left\{ \gamma\in\SL_{n}(\Z):\n{\gamma}\le T\right\}
\]
is finite. Therefore, restricting the map $\pi_q$ to $F_T$ gives us a finite problem of throwing balls into bins.
In this work, we consider the problem of when the balls fill all of the bins, but first, let us remark on the related problem of filling \emph{almost all} of the bins.

By the pigeon-hole principle, filling almost all the bins can only happen if $|F_T| \ge (1+o(1))|\SL_n(\Z/q\Z)|$. To make this explicit, we notice that 
\[
|\SL_n(\Z/q\Z)|=q^{n^2-1+o(1)},
\]
and Duke-Rudnick-Sarnak (\cite[Example 1.6]{duke1993density}) proved that for some constant $C_n$,
\begin{equation}\label{eq:DRS}
|F_T| =C_{n}T^{n^{2}-n}(1+o(1)).    
\end{equation}

This implies that to fill almost all the bins, or alternatively, to lift almost all the elements of $\SL_n(\Z/q\Z)$ to $\SL_n(\Z)$, one needs $T\ge q^{1+1/n+o(1)}$. 

Recently, it was proven that this order of magnitude for $T$ is also sufficient,
at least when $q$ is square-free.
\begin{theorem}[\cite{jana2022eisenstein-average} following \cite{assing2022density}]
\label{thm:average case} For every $\epsilon>0$, the following holds if $q$ is square-free and large enough in terms of $\epsilon$ and $n$.
For every $x\in\SL_{n}(\Z/q\Z)$
outside a set of at most $\epsilon\left|\SL_{n}(\Z/q\Z)\right|$
exceptions, there is $\gamma\in\SL_{n}(\Z)$ with $\pi_{q}(\gamma)=x$,
and $\|\gamma\|\le q^{1+1/n+\epsilon}$.
\end{theorem}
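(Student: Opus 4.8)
\emph{Proof proposal.} I would prove this by the spectral (variance) method, which reduces the optimal-lifting statement to a sharp density hypothesis for the level-$q$ automorphic spectrum. Write $G=\SL_n(\R)$, $K=\SO(n)$, $\Gamma=\SL_n(\Z)$ and $\Gamma(q)=\ker\pi_q$, so that $\SL_n(\Z/q\Z)\cong\Gamma/\Gamma(q)$. The operator norm $\n{\cdot}$ is bi-$K$-invariant, hence so is the ball $B_T=\{g\in G:\n g\le T\}$, and I fix a nonnegative bi-$K$-invariant $k_T$ with $\One_{(1-\delta)B_T}\le k_T\le\One_{B_T}$, smoothed at scale $\delta T$, allowing a $q^{o(1)}$ loss as $\delta\to0$. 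On $Y_q=\Gamma(q)\backslash G$ form the automorphic kernel $K_T(x,y)=\sum_{\gamma\in\Gamma(q)}k_T(x^{-1}\gamma y)$. For $\sigma\in\SL_n(\Z/q\Z)$, choosing a lift $\tilde\sigma\in\Gamma$ and setting $x_0=\Gamma(q)e$, $x_\sigma=\Gamma(q)\tilde\sigma$, unfolding gives
\[
K_T(x_0,x_\sigma)=\sum_{\delta\in\Gamma,\ \pi_q(\delta)=\sigma}k_T(\delta),
\]
so it suffices to show $K_T(x_0,x_\sigma)>0$ for all but $\epsilon|\SL_n(\Z/q\Z)|$ of the $\sigma$ once $T\ge q^{1+1/n+\epsilon}$.

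Next I would decompose $L^2(Y_q)$ spectrally. Since $k_T$ is bi-$K$-invariant, right convolution by $k_T$ annihilates non-spherical vectors and acts on the spherical line of each irreducible constituent $\pi$ by a scalar $\widehat{k_T}(\pi)$. The constant function yields the main term $\vol(Y_q)^{-1}\int_G k_T\d g\asymp T^{n^2-n}q^{-(n^2-1)}$ (using $\vol(Y_q)\asymp q^{n^2-1}$ and the volume asymptotics behind \cref{eq:DRS}), which is $\gg q^{(n^2-n)\epsilon}$ for $T\ge q^{1+1/n+\epsilon}$. Writing $E(\sigma)$ for the contribution of the orthogonal complement of the constants to $K_T(x_0,x_\sigma)$, Chebyshev's inequality reduces the theorem to the variance bound
\[
\Sigma:=\sum_{\sigma\in\SL_n(\Z/q\Z)}|E(\sigma)|^2\ \ll_\epsilon\ \big(T^{n^2-n}q^{-(n^2-1)}\big)^2|\SL_n(\Z/q\Z)|\ \asymp\ T^{2(n^2-n)}q^{-(n^2-1)}.
\]

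To bound $\Sigma$ I would expand $E(\sigma)$ in an orthonormal basis of spherical forms of level $q$ and exploit the structure of the orbit $\{x_\sigma\}$ --- it is the fibre of $Y_q\to\Gamma\backslash G$ over $\Gamma e$, a set of $|\SL_n(\Z/q\Z)|$ very well separated points --- so that, after a standard packing bound for the essentially bounded-bandwidth kernel $K_T(x_0,\cdot)$ together with Schur orthogonality over the $\SL_n(\Z/q\Z)$-orbit, $\Sigma$ is dominated by a weighted spectral sum $\sum_{\phi}|\widehat{k_T}(\pi_\phi)|^2 w_\phi$ over the non-constant spherical spectrum. Two inputs then control it. Locally, Harish-Chandra's estimate gives $|\widehat{k_T}(\pi)|\ll T^{(n^2-n)/2+\epsilon}$ for tempered $\pi$, with at most an extra factor $T^{c(n)\theta_\pi}$ when the spherical parameter of $\pi$ lies at distance $\theta_\pi\in[0,\tfrac12]$ from the tempered axis, $c(n)$ being the sharp such exponent. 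Globally, one needs the \emph{density hypothesis} for the level-$q$ spectrum: the exceptional constituents with $\theta_\pi\ge\theta$ number $\ll q^{(n^2-1)(1-2\theta)+\epsilon}$, and, weighted as above, contribute no more than the tempered bulk. Since $(n^2-n)(1+1/n)=n^2-1$, the bookkeeping at $T=q^{1+1/n}$ is exactly \emph{critical}: the tempered part alone makes $\Sigma$ of size $T^{2(n^2-n)}q^{-(n^2-1)}=q^{n^2-1+o(1)}$, so the sharp, Lindel\"of-strength density hypothesis is needed with no room to spare, and the $q^\epsilon$ slack in $T$ is what ultimately makes the variance bound win. For the cuspidal spectrum this density hypothesis is precisely the result of \cite{assing2022density}.

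The main obstacle is the Eisenstein (and residual) contribution to $\Sigma$, which in rank $\ge2$ is far subtler than in the classical $\SL_2$ case. Eisenstein series on $\SL_n$ are induced from cusp forms on proper Levi subgroups $\prod_i\GL_{n_i}$, so their contribution is an integral over the continuous parameters weighted by intertwining operators and by the Rankin--Selberg $L$-factors occurring in their constant terms; controlling it at the critical exponent requires a density estimate for the Eisenstein spectrum of $Y_q$ --- equivalently, mean-Lindel\"of / zero-density bounds for these $L$-functions --- which one reduces inductively to the cuspidal density hypothesis on the Levis. This is exactly the ingredient supplied by \cite{jana2022eisenstein-average}. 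Finally, the hypothesis that $q$ be square-free enters because it lets one describe the level-$q$ spectrum cleanly, prime by prime, via newform theory, which is the setting in which the cuspidal and Eisenstein density inputs are currently available.
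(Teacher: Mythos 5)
The paper does not prove \cref{thm:average case}; it is imported as an external input from \cite{jana2022eisenstein-average} and \cite{assing2022density}, so there is no internal proof to compare your attempt against. Your sketch is a faithful outline of the strategy of those references: the pretrace/variance reduction, the main term from the constant function, the criticality of the exponent $1+1/n$ (via $(n^2-n)(1+1/n)=n^2-1$), the Lindel\"of-strength density hypothesis for the level-$q$ spectrum with the cuspidal part from \cite{assing2022density} and the Eisenstein part from \cite{jana2022eisenstein-average}, and the role of square-freeness in applying newform theory prime by prime. The one step I would flag as glossed over is the passage from $\sum_\sigma|E(\sigma)|^2$ to the weighted spectral sum: in the actual arguments this is not a ``packing bound plus Schur orthogonality'' but a second unfolding --- summing $|K_T(x_0,x_\sigma)|^2$ over the fibre of $\Gamma(q)\backslash G\to\Gamma\backslash G$ and applying the pretrace formula again on the quotient --- and the precise normalisation of the density exponent and of the weights $w_\phi$ is where the real work in those papers lies; as written, your text is a correct roadmap rather than a proof.
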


In this work, we consider the problem of lifting \emph{every} element and essentially solve it, by the following two theorems.
\begin{theorem}
\label{thm:lower bound}For every $\epsilon>0$, there is a constant
$C_{n,\epsilon}$ such that for every $q$, there exists an element $x\in\SL_{n}(\Z/q\Z)$
with 
\[
\min\{\n{\gamma}:\gamma\in\SL_{n}(\Z),\pi_{q}(\gamma)=x\}\ge C_{n,\epsilon}q^{2-\epsilon}.
\]
\end{theorem}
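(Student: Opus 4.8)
The plan is to produce a single element $x\in\SL_n(\Z/q\Z)$ that is ``rigid'' in the sense that any integer lift is forced to have a large entry, and the natural candidate is a matrix built from a unit $u\in(\Z/q\Z)^\times$ placed on the diagonal, say $x=\diag(u,u^{-1},1,\dots,1)$, or more flexibly an elementary-type matrix like $\One + u\,E_{12}$ together with a companion entry. The point is the abstract of the paper itself: the main auxiliary input is that for every $q$ there is a \emph{small} element $u\in(\Z/q\Z)^\times$ whose $n$-th root $v$ (i.e.\ $v^n\equiv u$) is \emph{large} — meaning every representative of $v$ in $\Z$ has absolute value $\gtrsim q^{1-\epsilon/2}$ while $u$ itself has a representative of size $\lesssim q^{\epsilon'}$. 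I would take this number-theoretic statement as the key lemma and phrase \Cref{thm:lower bound} as its geometric consequence.

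\medskip

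\noindent\textit{Step 1 (Reduction to a statement about units and their roots).}
First I would fix such a pair $(u,v)$ with $v^n\equiv u \pmod q$, $v$ having no small lift, $u$ having a small lift $u_0\in\Z$ with $|u_0|\le q^{\epsilon}$ (and $u_0$ coprime to $q$ or at least with controlled gcd — this can be arranged). Then set
\[
x=\diag(u,u^{-1},1,\dots,1)\in\SL_n(\Z/q\Z).
\]
Let $\gamma\in\SL_n(\Z)$ be any lift of $x$. Looking at the reduction mod $q$, the $(1,1)$ entry $\gamma_{11}$ reduces to $u$, hence $\gamma_{11}\equiv u_0$ and $\gamma_{11}-u_0\equiv 0\pmod q$; if $\|\gamma\|$ is small this gives $|\gamma_{11}|\le n\|\gamma\|$, so either $\gamma_{11}=u_0$ exactly, or $|\gamma_{11}|\ge q-|u_0|\gg q$, and in the latter case we are already done since then $\|\gamma\|\gg q \ge q^{2-\epsilon}$ is false — so the interesting regime is $\gamma_{11}=u_0$ (or a bounded multiple). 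The real leverage must come from combining \emph{several} entries.

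\medskip

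\noindent\textit{Step 2 (Forcing a large entry via the $n$-th root).}
The mechanism I expect to work is a determinant/eigenvalue argument. Over $\Z/q\Z$ the matrix $x$ has a rational eigenvalue $u$ (and $u^{-1}$). If $\gamma\in\SL_n(\Z)$ lifts $x$, then the characteristic polynomial $\chi_\gamma(t)\in\Z[t]$ satisfies $\chi_\gamma(t)\equiv \chi_x(t)\pmod q$, and in particular $\chi_\gamma(u_0)\equiv 0\pmod q$ (using $\chi_x(u)=0$ and $u\equiv u_0$). Now $\chi_\gamma(u_0)$ is an integer, a polynomial expression in the entries of $\gamma$ and in $u_0$, of total degree $n$; if $\|\gamma\|\le T$ and $|u_0|\le q^\epsilon$ then $|\chi_\gamma(u_0)|\le C_n T^n q^{n\epsilon}$. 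Hence either $\chi_\gamma(u_0)=0$, or $T^n q^{n\epsilon}\gg q$, i.e.\ $T\gg q^{(1-n\epsilon)/n}$, which is far too weak. So the force of the argument is: $\chi_\gamma(u_0)=0$, i.e.\ $u_0$ is an actual integer eigenvalue of $\gamma$. But $\det\gamma=1$, so the product of all eigenvalues of $\gamma$ (in $\overline{\Q}$) is $1$; since $u_0\in\Z$ is one of them, its algebraic conjugates contribute, and the remaining eigenvalues have product $1/(\text{norm stuff})$ — and comparing with the fact that mod $q$ the spectrum of $\gamma$ is $\{u,u^{-1},1,\dots\}$ forces relations mod $q$ on symmetric functions of the eigenvalues that, when $\|\gamma\|$ is small, must hold as integer identities. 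Chasing these identities I would extract that some eigenvalue $\lambda$ of $\gamma$ satisfies $\lambda\equiv v \pmod{\mathfrak q}$ for a prime $\mathfrak q\mid q$ in $\Z[\lambda]$, and $\lambda$ is an algebraic integer all of whose conjugates are $\le\|\gamma\|$ in absolute value; since $v$ has no small lift, this pushes $\|\gamma\|\gg q^{1-\epsilon}$. Iterating or bootstrapping this — e.g.\ applying it to a suitable power or a $2\times 2$ block — upgrades the exponent from $1-\epsilon$ to $2-\epsilon$: the heuristic is that $\gamma$ must simultaneously have a small integer eigenvalue $u_0$ \emph{and} realize the large root $v$ in a residue ring, and the ``cost'' of doing both is the product of the two obstructions.

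\medskip

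\noindent\textit{Main obstacle.}
The genuinely hard part is Step 2, and more precisely two things: (a) proving the number-theoretic lemma that small units have large $n$-th roots for \emph{every} $q$ (including highly composite $q$ and prime powers), which presumably uses a pigeonhole/counting argument over the multiplicative group together with a lower bound on heights of roots of unity-like elements — this is the ``result of independent interest'' flagged in the abstract and likely occupies the bulk of the paper; and (b) converting ``$\gamma$ has a small integer eigenvalue and a large eigenvalue in a quotient'' into the clean quadratic bound $q^{2-\epsilon}$ rather than merely $q^{1-\epsilon}$. For (b) I would look for a more clever choice of $x$ than a diagonal matrix — perhaps $x$ whose integer lifts are forced, via \emph{two} independent congruence conditions (one from a small unit, one from its large root), to have an entry of size $\asymp q\cdot q^{1-\epsilon}=q^{2-\epsilon}$, exploiting that the lift of a product/ratio of an entry and a root-entry lives on a coset of $q\Z$ shifted by something of size $\asymp q^{1-\epsilon}$. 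Getting the two conditions to ``multiply'' rather than merely ``add'' is the crux, and I expect it requires the precise interplay between the operator norm on $\SL_n(\Z)$ and the arithmetic of $(\Z/q\Z)^\times$ that the authors isolate.
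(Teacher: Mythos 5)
You correctly identify the key number-theoretic input (a small unit with a large $n$-th root, i.e.\ \cref{thm:large roots}), but your Step~2 has a genuine gap, and you say so yourself: you never produce the mechanism that makes the two obstructions \emph{multiply} to give $q^{2-\epsilon}$ rather than add to give $q^{1-\epsilon}$. Your characteristic-polynomial/eigenvalue argument only exploits congruences modulo $q$, and as you compute, that ceiling is roughly $q^{1/n}$ or, after the speculative eigenvalue chase, at best $q^{1-\epsilon}$. The "iterating or bootstrapping" step that is supposed to upgrade the exponent to $2-\epsilon$ is not an argument.

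The paper's resolution is different and concrete: work modulo $q^2$, not modulo $q$. One applies \cref{thm:large roots} with $q^2$ in the role of $q$, obtaining $\beta\in(\Z/q^2\Z)^\times$ with $\alpha:=\beta^n$ small but $|n\beta|\gg q^{2-\epsilon}$, and takes $x=\diag(\beta\alpha^{-1},\beta,\ldots,\beta)\bmod q$ — so the \emph{large} root sits on the diagonal, not the small unit as in your $\diag(u,u^{-1},1,\ldots,1)$. The point (\cref{lem:lifting diagonal}) is that expanding $\det\gamma=1$ for any lift $\gamma\equiv x\bmod q$ and discarding terms divisible by $q^2$ yields a \emph{linear} congruence on the diagonal entries,
\[
\alpha a_1+a_2+\cdots+a_n\equiv n\beta \pmod{q^2},
\]
i.e.\ the tangent-space description of the fibre of $\SL_n(\Z/q^2\Z)\to\SL_n(\Z/q\Z)$. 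Since the coefficients $\alpha,1,\ldots,1$ are bounded while the inhomogeneous term $n\beta$ has size $\asymp q^2$, some diagonal entry must have size $\gg q^2/K^2$ (\cref{prop:n-th root lemma}). This linearization modulo $q^2$ is exactly the missing idea in your proposal; without it, no choice of diagonal $x$ and no amount of eigenvalue bookkeeping modulo $q$ will break the $q^{1}$ barrier.
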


\begin{theorem}
\label{thm:upper bound}There is a constant $C_{n}$ such that for
every $q$ and every $x\in\SL_{n}(\Z/q\Z)$, there is $\gamma\in\SL_{n}(\Z)$
with $\pi_{q}(\gamma)=x$ and $\n{\gamma}\le C_{n}q^{2}\log q$. 
\end{theorem}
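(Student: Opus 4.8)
The plan is to reduce the lifting problem to a two-row (or rather, to a vector-completion) question, and then to exploit the abstract's key arithmetic input: that modulo $q$ there is a small unit with a large $n$-th root. Given $x \in \SL_n(\Z/q\Z)$, I would first lift $x$ to \emph{some} integer matrix $\tilde x \in \Mat_n(\Z)$ with entries in $[0,q)$, so $\n{\tilde x} \le C_n q$; of course $\det \tilde x \equiv 1 \pmod q$ but in general $\det \tilde x$ is a largish integer. The strategy is then to correct $\tilde x$ by elementary-type row operations with integer, $q$-divisible coefficients, so that the result stays congruent to $x$ mod $q$, has determinant exactly $1$, and has norm $O(q^2 \log q)$. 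Concretely, I would look for a matrix of the form $\gamma = \tilde x + q M$ with $M \in \Mat_n(\Z)$ chosen to have small norm ($\n{M} = O(q \log q)$) and to force $\det \gamma = 1$.

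The main step is controlling the determinant. Writing $\gamma = \tilde x + qM$, the determinant $\det \gamma$ is a polynomial in the entries of $M$ whose constant term is $\det \tilde x$ and whose leading behaviour we get to design. The cleanest way to get a genuine handle is to restrict $M$ so that $\gamma$ differs from $\tilde x$ only in, say, the first row, and to arrange (by a preliminary unimodular column reduction of $\tilde x$, which costs only a bounded factor in norm) that the cofactors of that first row are coprime; then $\det \gamma$ ranges over an arithmetic progression $\det \tilde x + q \cdot (\text{something})$ as the free row varies, and by a Bézout/geometry-of-numbers argument one can hit the value $1$ with a row of size $O(q \cdot |\det \tilde x|^{1/(n-1)} ) = O(q \cdot q^{n/(n-1)}) $, which is already too big. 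This is where the arithmetic lemma from the abstract enters: rather than forcing the determinant to be exactly $1$ directly, I would first force it to be a small unit $u$ modulo... no — rather, I would aim to make $\det \gamma$ equal to an integer $d$ that is \emph{small} (polynomial in $\log q$, or bounded) and $\equiv 1 \bmod q$ would be automatic; then a final, separate step divides out $d$ using the large $n$-th root. Precisely: if $d = \det \gamma$ and $d$ has an $n$-th root modulo $q$ that is represented by a large integer, one can scale one column of $\gamma$ and compensate, or conjugate, to turn $\det = d$ into $\det = 1$ while keeping the reduction mod $q$ and only inflating the norm by the size of that $n$-th root; balancing the two contributions gives the bound $q^2 \log q$, with the $\log q$ coming from the density of smooth/unit values in the relevant progression.

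Let me restate the skeleton in the order I would carry it out. (i) Reduce to lifting a single primitive vector / a single row, via elementary column operations over $\Z$ that are trivial mod $q$ and bounded in norm. (ii) Use Dirichlet/Bézout in the cofactor variables to produce $\gamma \equiv x \bmod q$ with $\det \gamma = d$ where $d \equiv 1 \bmod q$ and $d$ lies in a short interval or has controlled factorization — here the input $|\det \tilde x| \le n! \, q^n$ and $q \mid (\det\tilde x - 1)$ is used. (iii) Invoke the abstract's main arithmetic result to find $u \in (\Z/q\Z)^\times$ with $u$ small and an $n$-th root $v$ of $u$ that is large as an integer, then choose $d$ in step (ii) to be compatible with $u$ so that a diagonal twist $\diag(v, v^{-1}\bmod q\text{-stuff}, 1, \dots)$-type correction kills the determinant. (iv) Optimize: the norm of $\gamma$ is bounded by (size of the Bézout row) $\times$ (size of the $n$-th root twist), and the product is $O(q^2 \log q)$.

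The hard part, and the reason the paper singles it out, is step (iii): one needs, for \emph{every} $q$ (including prime powers, where $(\Z/q\Z)^\times$ can be cyclic of order $\sim q$ so $n$-th powers are sparse), a unit that is simultaneously small in the archimedean sense and an $n$-th power, with a large integer representative of its $n$-th root. Equivalently one wants a short interval near $1$ (or near $0$) to contain an $n$-th power residue whose root is not itself short — a statement that is delicate precisely because $n$-th powers might be rare mod $q$. Everything else (the geometry-of-numbers for the Bézout step, the bounded-norm column reduction, assembling the final matrix) is routine once this arithmetic core is in hand, which is why I expect the bulk of the paper's work, and any $\log q$ losses, to be concentrated there.
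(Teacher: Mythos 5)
Your proposal does not follow the paper's route, and it contains genuine gaps rather than being an alternative proof. The first gap is that you have misassigned the role of the ``small unit with a large $n$-th root'' result: in the paper that statement (\cref{thm:large roots}, fed into \cref{prop:n-th root lemma}) is the engine of the \emph{lower} bound, \cref{thm:lower bound}; it plays no part in the upper bound. More importantly, the mechanism you propose for it cannot work. There is no integral operation that ``divides out'' a determinant: scaling a column of $\gamma$ by an integer $v$ multiplies $\det\gamma$ by $v$ and simultaneously multiplies that column by $v$ modulo $q$, so preserving $\pi_q(\gamma)=x$ forces $v\equiv 1\bmod q$, whence $|v|\ge q+1$ unless $v=1$ and nothing is gained; conjugation preserves the determinant entirely. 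Your intermediate goal is also self-defeating: since $\gamma\equiv x\bmod q$ already forces $\det\gamma\equiv 1\bmod q$, if you could arrange $\det\gamma=d$ with $|d|$ bounded or polylogarithmic you would automatically have $d=1$ and the $n$-th root step would be vacuous --- but producing such a small $d$ is exactly the problem, and your step (ii) gives no mechanism for it.

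The second gap is that the step you dismiss as routine is where all the work (and the $\log q$) actually lives. Arranging that the cofactors of one row are coprime --- equivalently, that the remaining $n-1$ rows form a primitive system, i.e.\ are linearly independent modulo every prime (\cref{lem:primitive lattice}) --- cannot be achieved by a ``preliminary unimodular column reduction costing a bounded factor'': any row or column operation that is trivial mod $q$ must have coefficients divisible by $q$, and whether an $O(\log q)$-sized choice of such perturbations achieves primitivity is a sieve problem. The paper solves it in \cref{prop:step 1} via \cref{lem:disjointness lemma} and \cref{lem:large primes}, handling the bounded primes by the Chinese remainder theorem and the large primes by a counting argument, at a cost of $O(\log q)$ per entry; this yields a lift of the first $n-1$ rows with entries $O(q\log q)$ that extends to $\SL_n(\Z)$. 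The matrix is then completed (\cref{prop:step 2}) and the last row is corrected by adding $\sum\alpha_i b_i$ with $|\alpha_i|\le q/2$ (\cref{lem:linear algebra}), which is the sole source of the factor $q^2$; no determinant surgery and no $n$-th roots are needed.
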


\cref{thm:lower bound}  shows that the worst case is quite far from the average case. Indeed, the ratio between the exponents for the almost-diameter and the diameter is $2n/(n+1)$, which is $4/3$ for $n=2$, and tends to $2$ as $n$ grows. One should notice that by a simple argument, the ratio is bounded by $2$, i.e., every element has a lift of norm bounded by $q^{2+2/n+o(1)}$.\footnote{The argument is not entirely trivial since $\|\gamma\|\ne \|\gamma^{-1}\|$, and goes as follows. For $0<\epsilon<1/2$ and $q$ large enough, the set $A=\{\pi_q(\gamma):\|\gamma\|\le q^{1+1/n+\epsilon}\}$ satisfies $A>\frac{1}{2}|\SL_n(\Z/q\Z)|$. Therefore, for every $x\in \SL_n(\Z/q\Z)$, $A\cap (A^{-1}x)\ne \phi$. So there are $\gamma_1,\gamma_2$ with $\|\gamma_i\|\le q^{1+1/n+\epsilon}$, $\pi_q(\gamma_1)= \pi_q(\gamma_2^{-1})x$. Then $\gamma_2\gamma_1$ is the required lift of $x$.}
Therefore, the worst case is close to being as bad as possible, a phenomenon Sarnak calls \emph{big holes}. Interestingly, the same factor $2n/(n+1)$ appears (implicitly) in the work of Harman \cite{harman1990approximation}, see the discussion below.

We provide some more context to \cref{thm:lower bound,thm:upper bound}. Studying upper bounds for lifts is closely related to expander graphs. Indeed, for $n\ge3$, let $S\subset\SL_{n}(\Z)$ be a generating set. Then Margulis, following Kazhdan, showed that the Cayley graphs with underlying group $\SL_{n}(\Z/q\Z)$ and generators $\pi_{q}(S)$ are a family of expanders \cite{margulis1973explicit}, \cite[\S 3.3]{lubotzky1994discrete}. This implies that there exists a constant $\alpha_{n}$ such that every $x\in\SL_{n}(\Z/q\Z)$ can be lifted to an element of norm $\le q^{\alpha_{n}}$. A similar bound follows from Selberg's work for $n=2$ \cite[\S 4.4]{lubotzky1994discrete}. One can make the bound somewhat more explicit, using the work of Gorodnik and Nevo \cite[Theorem~1.1]{gorodnik2012lifting} which uses the mean ergodic theorem for actions of Lie groups. Their very general work specialized to this case implies that one can find a lift of norm $\le C_{n}q^{2(n^2-1)/n}$ for $n\ge3$. 

It seems that the particular question above first appeared in a letter of Sarnak \cite{sarnak2015lettermiller}, who proved \cref{thm:upper bound} for the $n=2$ case, with the slightly weaker bound $Cq^{2}\log^{3}q$ \cite[equation (20)]{sarnak2015lettermiller}.
(Sarnak's argument yields, in fact, $Cq^{2}\log^{2}q$ if one uses the best
known bound for the Jacobsthal function due to Iwaniec \cite{iwaniec1978}.)
Finally, before this work, the best general bound was $q^{2+2/n+o(1)}$ based on \cref{thm:average case} as explained above.

Regarding \cref{thm:lower bound}, much less was known. The best lower bound before this work is based on the pigeon-hole argument stated above, which implies that there is an element $x\in \SL_n(\Z/q\Z)$ such that every lift of it is of norm $\ge q^{1+1/n+o(1)}$. For $n=2$, and in the special case that $q$ is a power of $2$,
Sarnak proved \cref{thm:lower bound}, and stated his belief that the same is true for every $q$ \cite[Page 11]{sarnak2015lettermiller}. \cref{thm:lower bound} confirms this and, moreover, extends it to $n\ge3$.

The question we study can be greatly generalized as follows. Assume that $G$ is a group that has a gauge function $l\colon G\to\R_{\ge0}$,
and $X_{q}$ are a sequence of finite sets on which $G$ acts. Then one can define a ``distance-like function'' on $X_{q}$ by \footnote{The function will be a metric if the conditions $l(\gamma_{1}\gamma_{2})\le l(\gamma_{1})+l(\gamma_{2})$,
$l(e)=0$, $l(\gamma)=l(\gamma^{-1})$, $l(g)> 0$ for $\gamma\ne e$ hold.}
\[
d(x,y):=\min\{l(\gamma):\gamma\cdot x=y\}.
\]

One may then ask for the properties of this space, and in particular about its ``diameter'', ``almost-diameter'', etc. 

In our case, $G=\SL_{n}(\Z)$, $l(\gamma)=\log(\|\gamma\|)$ and $X_{q}=\SL_{n}(\Z/q\Z)$\footnote{Notice that $l(\gamma)\ne l(\gamma^{-1})$ if $n\ge 3$, so this is not a proper metric space.}.
This case has the added property that the space $X_q$ is symmetric (that is, $G$ has a natural action on it on both sides). 

This problem is specifically interesting for general arithmetic groups, acting on their congruence quotients. 
The fact that the diameter is logarithmic in the size of the space in all of those cases follows from arithmetic property $\tau$ or spectral gap. 
It is expected that \cref{thm:average case} will generalize very broadly even when the space is not symmetric. 
Stated in non-precise terms, we expect that the average distance between points will agree with the lower bound given by the pigeon-hole principle. 
This is closely related to automorphic forms and approximations to the generalized Ramanujan conjecture, and we refer to \cite{golubev2023sarnak} for further discussion,
where the term {\em optimal lifting property} is used for the conjectured generalisations of \cref{thm:average case}.

The diameter is less understood, even conjecturally, and its relation with spectral questions is less direct. 
One can look at this problem as a discrete version of the Diophantine exponents of Ghosh, Gorodnik and Nevo \cite{ghosh2015diophantine}.

In the last section of this work, we consider the action of $\SL_{n}(\Z)$ on 
the $n$-dimensional affine space $A_{q}$ and the $(n-1)$-dimensional projective space $P_{q}$. 
We state some observations, conjectures, and questions. Our most interesting observation is that for $n=2$ and some non-prime $q$, the diameter of $P_q$ is as large as it can be, which is twice the almost-diameter (at least under Selberg's conjecture). 

Another interesting case is the case of division algebras, and quaternion algebras in particular. Applying the question to the LPS construction (and changing $\R$ to $\Q_{p}$), one gets the well-known problem of bounding the diameter of LPS Ramanujan graphs, where it is known that it is at least $4/3$ times the pigeon-hole lower bound (at least for some of the graphs), and it is conjectured that this bound is precise \cite{sardari2019diameter}. Our results about the projective space should be compared with the numerical study of Rivin and Sardari for quotients of LPS graphs \cite{rivin2019quantum}, where they consider only the prime case and conjecture that the diameter is the minimal possible one.

By exchanging the roles of the reals and of the finite adeles in the problem considered in this paper, one reaches the problem of approximating real matrices of determinant $1$ by integral matrices of determinant $q$, appropriately normalized by multiplying with $q^{-1/n}$. This problem was considered for $n=2$ by Tijdeman \cite{Tijdeman1986approximation} and for general $n$ by Harman \cite{harman1990approximation}. The results for $n=2$ were improved by Sardari \cite{sardari2019optimal}.

The pigeon-hole bound says that to reach every (or almost every) $\epsilon$-neighborhood of a real matrix of bounded size, one needs at least $q>\epsilon^{-(n+1)(1+o(1))}$. Given the analogy with optimal lifting, this may also be sufficient for almost all matrices, but this is unknown even for $n=2$. Harman showed that for some values of $q$, the identity matrix cannot be approximated unless $q> \epsilon^{-2n(1+o(1))}$ \cite[Theorem 1]{harman1990approximation}. If taking $q$ of this size is also enough for approximating all real matrices, then the ratio between the exponents in the average case and the worst case is $2n/(n+1)$, the same one as in the problem studied in this paper. The best known bounds for approximating all matrices are that $q> \epsilon^{-6(1+o(1))}$ is enough for $n=2$ by Sardari (\cite[Corollary 1.1]{sardari2019optimal}) and that
 $q> \epsilon^{-4n(1+o(1))}$ is enough for $n\ge 3$ by Harman \cite[Theorem 3]{harman1990approximation}.

Another similar problem is the covering exponent of $S^3$. For this problem, the almost-covering exponent is known to agree with the pigeon-hole estimate (giving rise to Golden Gates), and we expect that the covering exponent will be $4/3$ of the pigeon-hole estimate (see \cite{browning2019twisted} and the references therein). 
This is the same $4/3$ as in the $n=2$ case of the problem studied in this paper.

\subsection{Proof outlines}\label{sec:outline}

The main contribution of the paper is \cref{thm:lower bound}.
We exhibit a suitable element of $\SL_n(\Z/q\Z)$ such that any lift of it to
$\SL_n(\Z/q^2\Z)$ must already require entries of size $q^{2+o(1)}$.
This is based on the fact that given $x\in\SL_n(\Z/q\Z)$, the entries of its lifts to
$\SL_n(\Z/q^2\Z)$ must satisfy a(n inhomogeneous) linear equation $\mod q^2$
given by the tangent space to $x$ in $\SL_n$.
We will choose $x$ to be a suitable diagonal matrix so that the resulting linear
equation has small coefficients but a large inhomogeneous term, which forces at least
one entry of any lift of $x$ to be large.
Using these ideas, we can prove the following result.

\begin{prop} \label{prop:n-th root lemma}
	Assume that for $q\in \Z_{>0}$, there is $\beta\in(\Z/q^2\Z)^\times$ with $|n\beta|\ge K^{-1}q^{2}$
	and $|\beta^{n}|\le K$ for some $K\in\R_{>0}$. Then there is a diagonal $x\in \SL_n(\Z/q\Z)$ such that every lift $\gamma$ of it to $\SL_n(\Z)$ satisfies $\|\gamma\|\ge  q^{2}/nK^2$.
\end{prop}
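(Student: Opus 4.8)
The plan is to implement the heuristic of the proof outline explicitly. Fix an integer $b$ with $b\equiv\beta\pmod{q^2}$, and let $\epsilon$ denote the least absolute residue of $\beta^n$ modulo $q^2$; thus $\epsilon\in\Z$, $b^n\equiv\epsilon\pmod{q^2}$, $|\epsilon|\le K$, $\gcd(b,q)=1$, and $|nb\bmod q^2|=|n\beta|\ge K^{-1}q^2$. Put
\[
x:=\diag(\bar b,\dots,\bar b,\bar b^{\,1-n})\in\SL_n(\Z/q\Z),\qquad\bar b:=b\bmod q,
\]
with the first $n-1$ diagonal entries equal to $\bar b$ and the last equal to the inverse of $\bar b^{\,n-1}$ modulo $q$, so that $\det x=1$. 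I claim every lift of this $x$ has norm at least $q^2/(nK^2)$.

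The heart of the proof is a single linear congruence satisfied by the diagonal of an arbitrary lift. Let $\gamma\in\SL_n(\Z)$ with $\pi_q(\gamma)=x$. Since $\gamma_{ij}\equiv0\pmod q$ for $i\ne j$, in the Leibniz expansion of $\det\gamma$ every non-identity permutation contributes a term divisible by $q^2$, so $\prod_{i=1}^n\gamma_{ii}\equiv\det\gamma=1\pmod{q^2}$. For $i<n$ write $\gamma_{ii}=b+qu_i$ with $u_i\in\Z$ (possible as $\gamma_{ii}\equiv\bar b\pmod q$) and set $S:=u_1+\dots+u_{n-1}$, so $\sum_{i<n}\gamma_{ii}=(n-1)b+qS$ and $\prod_{i<n}\gamma_{ii}\equiv b^{n-1}+qb^{n-2}S\pmod{q^2}$. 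Multiplying the relation $\gamma_{nn}\prod_{i<n}\gamma_{ii}\equiv1\pmod{q^2}$ by $b$, expanding, and using $\gamma_{nn}b^{n-1}\equiv1\pmod q$ (which is the reduction mod $q$ of $\prod_i\gamma_{ii}\equiv1$), one obtains $b^n\gamma_{nn}+qS\equiv b\pmod{q^2}$, i.e.\ $\epsilon\gamma_{nn}\equiv b-qS\pmod{q^2}$. Adding this to the formula for $\sum_{i<n}\gamma_{ii}$ yields
\[
\sum_{i=1}^{n-1}\gamma_{ii}+\epsilon\,\gamma_{nn}\equiv\bigl((n-1)b+qS\bigr)+\bigl(b-qS\bigr)=nb\equiv n\beta\pmod{q^2}.
\]

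To finish, suppose for contradiction that $\n{\gamma}<q^2/(nK^2)$. The operator norm dominates every matrix entry, so $|\gamma_{ii}|<q^2/(nK^2)$ for all $i$; hence the integer $\sum_{i<n}\gamma_{ii}+\epsilon\gamma_{nn}$ has absolute value strictly less than $(n-1)\tfrac{q^2}{nK^2}+K\tfrac{q^2}{nK^2}=\tfrac{(n-1+K)q^2}{nK^2}\le\tfrac{q^2}{K}$, the last inequality being equivalent to $K\ge1$, which holds since $K^{-1}q^2\le|n\beta|\le q^2/2$ forces $K\ge2$. But then the displayed congruence gives $|n\beta\bmod q^2|<q^2/K=K^{-1}q^2$, contradicting the hypothesis. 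Therefore every lift $\gamma$ of $x$ satisfies $\n{\gamma}\ge q^2/(nK^2)$.

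The only genuine obstacle is identifying the right objects: the diagonal matrix $x$ and the linear form $\gamma\mapsto\gamma_{11}+\dots+\gamma_{n-1,n-1}+\beta^n\gamma_{nn}$ on the entries. This form is exactly the inhomogeneous linear equation attached to the tangent space of $\SL_n$ at $x$, and it does the job because its coefficients $1,\dots,1,\epsilon$ are all of size at most $K$ whereas its value $n\beta$ has size at least $K^{-1}q^2$, which rules out any lift all of whose entries are smaller than roughly $q^2/K$. Everything after that is routine manipulation of congruences modulo $q^2$ together with the elementary bound $|\gamma_{ij}|\le\n{\gamma}$.
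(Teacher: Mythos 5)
Your proof is correct and follows essentially the same route as the paper: the same diagonal matrix $\diag(\beta,\ldots,\beta,\beta^{1-n})$ (the paper just places the exceptional entry first), the same key congruence $\sum_{i<n}\gamma_{ii}+\beta^{n}\gamma_{nn}\equiv n\beta\pmod{q^2}$ extracted from $\det\gamma=1$ via the Leibniz expansion, and the same size comparison at the end. The only cosmetic difference is that the paper isolates the congruence as a separate lemma and concludes with a generic "small coefficients, large value" bound, whereas you argue by contradiction directly; the content is identical.
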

The absolute value in the proposition is the absolute value on $\Z/q^2\Z$ defined by $|a|=\min\{|a'|:a'\in a+q^2\Z\}$.

The construction in \cref{prop:n-th root lemma} depends on finding a suitable
$\beta\in(\Z/q^2\Z)^\times$ that satisfies the hypotheses in the proposition.
Its existence is guaranteed by the following result.

\begin{theorem}\label{thm:large roots}
	For every $k>0,n>0$, there is a constant $C$ such that the following holds.
	For every $q>0$, there are $\alpha,\beta\in(\Z/q\Z)^\times$ with
	$|\alpha|<C$, $\beta^{n}=\alpha$ and $|n\beta|>C^{-1}q^{1-1/k}$.
\end{theorem}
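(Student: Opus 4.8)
We set $\delta:=C^{-1}q^{1-1/k}$, call $\beta\in(\Z/q\Z)^\times$ \emph{good} if $|\beta^n|<C$, and \emph{bad} if $|n\beta|\le\delta$; the theorem asks for a good, non-bad $\beta$ (then $\alpha:=\beta^n$ does the job). The good elements are precisely the $n$-th roots of the integers $\alpha\in\{-C,\dots,C\}$ that are coprime to $q$ and are $n$-th power residues mod $q$; taking $\alpha=1$ already gives $|\mu|$ of them, where $\mu:=\mu_n(\Z/q\Z)$, while the bad elements number at most $O_n(\delta)$ (after multiplying by $n$ they form an interval of length $\le 2\delta+1$, whose preimage is an arithmetic progression of that length). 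Hence if $|\mu|\gg_n\delta$ we are done at once with $\alpha=1$, and the whole difficulty lies in the complementary range, where $|\mu|$ — equivalently, the index of the group of $n$-th powers in $(\Z/q\Z)^\times$ — is small.

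The basic mechanism in that range is as follows. Suppose $\beta$ is good and bad with $\gcd(n,q)=1$; put $\rho:=n\beta\bmod q$, so $|\rho|\le\delta$ and, by the binomial theorem, $\rho^n\equiv n^n\beta^n\equiv n^n\alpha\pmod q$ with $|\alpha|<C$ (the case $\gcd(n,q)>1$ needs only minor bookkeeping). Thus $q\mid\rho^n-n^n\alpha$, and if $|\rho|<q^{1/n}$ this forces $\rho^n=n^n\alpha$ in $\Z$, so $\alpha$ is a perfect $n$-th power. Consequently, \emph{as soon as $\delta<q^{1/n}$}, i.e. $k\le n/(n-1)$, any good $\beta$ whose associated $\alpha$ is not a perfect $n$-th power is automatically non-bad, and it remains only to exhibit a bounded, coprime $n$-th power residue $\alpha$ that is not a perfect $n$-th power. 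When $|\mu|$ is not too small this follows from a structural argument — a subgroup of $(\Z/q\Z)^\times$ confined to the (rescaled) bad progression is very restricted, so $\mu$ itself already contains a non-bad element — and when $|\mu|$ is genuinely small one produces $\alpha$ by a finite pigeonhole over products of the first few primes (a least-power-residue-type device), using that smallness of $|\mu|$ limits how many primes dividing $q$ are ramified for $n$.

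The main obstacle — and the reason the theorem is phrased with an arbitrary $k$ — is pushing the exponent from $q^{1-1/n}$ up to $q^{1-1/k}$. For $k>n/(n-1)$ one has $\delta>q^{1/n}$, so $\rho^n\equiv n^n\alpha\pmod q$ with $|\rho|\le\delta$ no longer collapses: $q$ divides $\rho^n-n^n\alpha$ nontrivially. The plan is to run the argument with the whole family of perfect-power residues $\{b^n:1\le b\le C^{1/n}\}$ simultaneously: if the $n$-th roots of all of them were bad, then $\mu$ would be forced into the Bohr-type set $\bigcap_{1\le b\le C^{1/n}}(nb)^{-1}[-\delta,\delta]$, which for $q$ large has size only $O_n\!\big(\delta/C^{1/n}\big)$, a genuine improvement on the bound for $|\mu|$; feeding this back, together with the fact that being bad modulo $q$ is a rigid constraint on the residue of $\beta$ modulo each prime-power component of $q$ (so that one can distribute the construction across the factorization of $q$), one should be able to drive the admissible exponent up to $1-1/k$. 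I expect this amplification step — balancing the size of $\mu$, the supply of bounded $n$-th power residues, and the prime factorization of $q$ — to be the technical heart of the proof, and the place where the loss of $1/k$ is incurred.
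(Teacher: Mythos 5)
Your proposal has a genuine gap: it only carries through (in outline) for the easy range $k\le n/(n-1)$, and for the general $k$ that the theorem actually claims you explicitly defer the ``amplification step'' as the expected ``technical heart'' without supplying it. That missing step is indeed where all the work lies, and the heuristic you offer for it is not sound as stated: the claim that the intersection $\bigcap_{b}(nb)^{-1}[-\delta,\delta]$ has size $O_n(\delta/C^{1/n})$ is the naive ``random frequencies'' prediction for a Bohr set, and Bohr sets routinely violate it when the frequencies are arithmetically correlated (which is exactly the situation one must rule out). Controlling this is the whole problem, not a lemma one can quote.

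What the paper does at this point, and what your sketch lacks, is the following mechanism. One takes the frequency set $A=\{\beta_1^{a_1}\cdots\beta_m^{a_m}:0\le a_i<n\}$ built from many small $n$-th powers $\alpha_i=\beta_i^n$ chosen so that $(\alpha_i/\alpha_j)^{1/n}\notin\Q$ for $i\ne j$, and assumes for contradiction that $n\in B(A,q^{-1/k})$. The key structural input is a Bohr-set lemma in which the approximating generalized arithmetic progression has rank $r\le k$ \emph{independently of $|A|$} (possible only because the radius $q^{-1/k}$ is so small); the multiplicative near-invariance $\beta_j A\subset A\cup\alpha_j A$ then lets one realize multiplication by each $\beta_j$ as an integer matrix $B_j\in M_r(\Z)$ with controlled entries acting on the GAP generators, and the injectivity of the GAP parametrization upgrades congruences to genuine identities $B_iB_j=B_jB_i$ and $B_j^n=\alpha_j I$. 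Simultaneous diagonalization plus pigeonhole over the $n^r\le n^k$ possible root-of-unity patterns forces $B_{s_1}=(\alpha_{s_1}/\alpha_{s_2})^{1/n}B_{s_2}$ for some $s_1\ne s_2$, contradicting irrationality. None of these ingredients (bounded-rank GAP, the induced commuting matrices, the multiplicative-independence requirement on the $\alpha_i$) appears in your proposal, and your construction of the small $n$-th power residues does not arrange the pairwise independence that the endgame needs. Separately, your treatment of the large-$|\mu_n|$ case via ``$\alpha=1$'' is fine as far as it goes, but the complementary small-$|\mu_n|$ case is handled in the paper by a different dichotomy (small $P$-factor versus not) with an explicit CRT construction, and your ``least-power-residue-type device'' remains too vague to check.
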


Notice that \cref{prop:n-th root lemma} and \cref{thm:large roots} together imply \cref{thm:lower bound}.
(We need to apply \cref{thm:large roots} with $q^2$ in the role of $q$.)

The proof of \cref{thm:large roots} uses ideas from additive combinatorics, in particular \emph{Bohr Sets}.
We give some more details.
In the first part of the proof, we find a suitable collection of elements
$\beta_1,\ldots,\beta_m$ such that $|\beta_j^n|<C$ for all $j$.
Then we aim to show that $|n\beta_1^{a_1}\cdots\beta_m^{a_m}|$ is large for some choice
of $0\le a_1,\ldots,a_m\le n-1$.
In that case, $\beta=\beta_1^{a_1}\cdots\beta_m^{a_m}$ can be taken in the theorem.

With this in mind, we suppose to the contrary that the Bohr set
\begin{align*}
B(\{\beta_1^{a_1}&\cdots\beta_m^{a_m}\}, q^{-1/k})\\
&:=\{x\in\Z/q\Z:|x\cdot \beta_1^{a_1}\cdots\beta_m^{a_m}|<q^{1-1/k}\text{ for all choices of $a_j$}\}
\end{align*}
contains $n$, and hence crucially it is non-trivial.
Then we show that this Bohr set can be approximated by a generalized arithmetic
progression of rank $r \le k$.
The fact that the rank $r$ can be bounded independently of $m$ is a non-standard
feature that we have thanks to our choice of the radius, and this is of critical
importance for the success of our argument.

We observe that the set of frequencies $\{\beta_1^{a_1}\cdots\beta_m^{a_m}\}$
is ``almost-invariant" under multiplication by each $\beta_j$.
We use this to show that multiplication by $\beta_j$ induces a linear ``almost-action'' on the
generalized arithmetic progression approximating the Bohr set,
which implies that each $\beta_j$ corresponds to some matrix $B_j\in M_r(\Z)$.
We will see that the $B_j$ commute and their $n$-th powers are scalar matrices.
This will lead to a contradiction, provided $m$ is sufficiently large and the
$\beta_j$ are genuinely different in a suitable sense.

For the proof of \cref{thm:upper bound}, we generalize the strategy employed by Sarnak in \cite{sarnak2015lettermiller}. 
It consists of two steps.
\begin{enumerate}
\item Finding a lift of the first $n-1$ rows, so that they can be completed
to a matrix in $\SL_{n}(\Z)$. This is slightly complicated but can be done at a relatively small
cost, with coordinates bounded by $Cq\log q$. 
\item Finding a lift of the last row. This is simple but requires taking coordinates of size
$Cq^{2}\log q$.
\end{enumerate}

The proof in the $n=2$ case is almost identical to that in \cite{sarnak2015lettermiller}
except that we use a more elementary approach which also yields a marginally better
bound.

The proof of \cref{thm:upper bound} shows that every element in $\SL_n(\Z/q\Z)$ can be lifted to a matrix in $\SL_n(\Z)$ such that the entries of the first $n-1$ rows are bounded by $Cq\log q$ and the entries of the last row are bounded by $Cq^2 \log q$. 
This is almost optimal in the sense that one cannot hope to improve the exponent
$2$ in the bound for the entries in the last row without increasing the bound
for the first $n-1$ rows.
This is demonstrated by the following lemma.

\begin{lemma}\label{lem:skewed counting}
Let $T\in\R_{>0}$ be a number.
The number of elements in $\SL_n(\Z)$ such that the entries in the first $n-1$ rows are bounded by $T$ and the entries in the last row are bounded by $T^2$ is bounded by $CT^{n^2-1}\log T$, where $C$ is a constant depending only on $n$.
\end{lemma}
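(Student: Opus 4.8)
The plan is to organise the count according to the rank-$(n-1)$ sublattice of $\Z^n$ spanned by the first $n-1$ rows, which reduces the statement to two standard lattice-point estimates. Write $N(T)$ for the quantity to be bounded and assume $T\ge 2$ (for small $T$ there is nothing to prove). Given $\gamma\in\SL_n(\Z)$ with rows $R_1,\dots,R_{n-1}$ and $v$, the rows $R_1,\dots,R_{n-1}$ form a $\Z$-basis of a \emph{primitive} rank-$(n-1)$ sublattice $L:=\mathrm{span}_\Z(R_1,\dots,R_{n-1})\subseteq\Z^n$; writing $C_L$ for the (up-to-sign unique) primitive integer normal vector of $L$, one has $\det\gamma=\pm\langle v,C_L\rangle$, $C_L^\perp\cap\Z^n=L$ by primitivity, and $\det L=\n{C_L}_2$. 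Hence the admissible last rows $v$ (those with $\det\gamma=1$) form a single coset $v_0+L$ of $L$, and conversely every choice of a primitive $L$, an ordered $\Z$-basis of $L$ inside $[-T,T]^n$, and a point of $v_0+L$ inside $[-T^2,T^2]^n$ produces an element of $\SL_n(\Z)$. Therefore
\[
N(T)=\sum_{L}b(L,T)\,\ell(L,T^2),
\]
where $L$ runs over primitive rank-$(n-1)$ sublattices of $\Z^n$, $b(L,T)$ is the number of ordered $\Z$-bases of $L$ in $[-T,T]^n$, and $\ell(L,R):=\#\big((v_0+L)\cap[-R,R]^n\big)$ (well defined independently of the sign ambiguity by central symmetry of the box).

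The key observation is that $b(L,T)>0$ forces every successive minimum of $L$ to satisfy $\mu_i(L)\le\sqrt n\,T$, since an ordered basis inside $[-T,T]^n$ consists of $n-1$ linearly independent vectors of Euclidean length $\le\sqrt n\,T$. Two consequences follow from the standard bound $\#(\Lambda\cap B(0,r))\ll_d\prod_{i=1}^{d}\big(1+r/\mu_i(\Lambda)\big)$ for a rank-$d$ lattice $\Lambda$, together with Minkowski's second theorem $\mu_1(L)\cdots\mu_{n-1}(L)\asymp_n\det L$. First, translating the coset $v_0+L$ so that $v_0$ has minimal norm — which is $\le\sqrt n\,T^2$ whenever the coset meets $[-T^2,T^2]^n$, and otherwise $\ell(L,T^2)=0$ — the points of the coset in the box inject into $L\cap B(0,2\sqrt n\,T^2)$, and using $\mu_i(L)\le\sqrt n\,T\le T^2$ this yields $\ell(L,T^2)\ll_n (T^2)^{n-1}/\det L$. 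Second, and much more crudely, $b(L,T)\le\big(\#(L\cap[-T,T]^n)\big)^{n-1}\ll_n\big(T^{n-1}/\det L\big)^{n-1}$ by the same ball count.

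Putting these together (terms with $b(L,T)=0$ drop out),
\[
N(T)\ll_n (T^2)^{n-1}\sum_{L}\frac{b(L,T)}{\det L}\ll_n (T^2)^{n-1}\,T^{(n-1)^2}\sum_{L}\frac{1}{(\det L)^{n}},
\]
the last sum over primitive rank-$(n-1)$ sublattices $L$ with $b(L,T)>0$. Such $L$ correspond, up to sign, bijectively to primitive vectors $C\in\Z^n$ via $L=C^\perp\cap\Z^n$, with $\det L=\n{C}_2$, and $b(L,T)>0$ forces $\n{C}_2=\det L\ll_n T^{n-1}$ (Minkowski again). Hence
\[
\sum_{L}\frac{1}{(\det L)^n}\le\sum_{0<\n{C}_2\ll_n T^{n-1}}\frac{1}{\n{C}_2^{\,n}}\ll_n\log T,
\]
since each dyadic shell $\{2^{j}\le\n{C}_2<2^{j+1}\}$ contains $\ll_n 2^{jn}$ vectors and so contributes $O_n(1)$, while there are $\ll_n\log T$ shells. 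As $(T^2)^{n-1}T^{(n-1)^2}=T^{(n-1)(n+1)}=T^{n^2-1}$, this gives $N(T)\ll_n T^{n^2-1}\log T$, which is the assertion.

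The routine ingredients are the two ball counts. The point that needs care — and the only genuinely non-mechanical step — is the observation that $b(L,T)>0$ makes \emph{all} successive minima of $L$ small: this is exactly what removes the lower-order (``thin-lattice'') terms from the point count and produces the clean bound $\ell(L,T^2)\ll_n(T^2)^{n-1}/\det L$, without which the $n\ge 4$ cases would fail. One should also note where the logarithm comes from: it is the innocuous-looking sum $\sum_{\n{C}_2\ll_n T^{n-1}}\n{C}_2^{-n}$, which is of constant order on each dyadic scale and hence of size $\asymp\log T$. The crude estimate $b(L,T)\le(\#(L\cap[-T,T]^n))^{n-1}$ is very wasteful but still comfortably sufficient, so nothing sharper is needed there.
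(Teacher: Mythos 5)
Your proof is correct and follows essentially the same route as the paper: decompose by the rank-$(n-1)$ lattice spanned by the first $n-1$ rows, bound the number of bases in the $T$-box and the number of admissible last rows in a coset of that lattice in the $T^2$-box, and extract the $\log T$ from a dyadic sum over the covolume. The only difference is in the standard inputs — you use Minkowski's second theorem, the successive-minima ball count, and the primitive-normal-vector correspondence where the paper cites Schmidt's lattice-counting lemmas — and your explicit observation that $b(L,T)>0$ forces all successive minima to be $O(T)$ correctly justifies the point counts.
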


This lemma may be well known to experts and it is unlikely to be optimal, in particular,
the $\log T$ factor may be unnecessary.

\subsection{Organisation of the paper}

In \cref{sec:diagonal}, we prove \cref{prop:n-th root lemma} and also recall
Sarnak's example \cite{sarnak2015lettermiller} of an element
in $\SL_2(\Z/2^m\Z)$ without small lifts in
$\SL_2(\Z)$.
This can be explained using \cref{prop:n-th root lemma}.

\cref{sec:large roots} is devoted to the proof of \cref{thm:large roots}.
This will complete the proof of \cref{thm:lower bound}
as explained in \cref{sec:outline}.

\cref{thm:upper bound} is proved in \cref{sec:lifting sln}.
We also prove \cref{lem:skewed counting} in \cref{sec:skewed counting}.

We conclude the paper in \cref{sec:affine and projective} by making some
observations and posing some problems about related questions in the settings
of the actions of $\SL_n$ on affine and projective space.

\subsection{Notation}
Throughout this work, the constants may depend implicitly on $n$, and usually we do not mention this. In \cref{sec:large roots}, the constants may also depend on $k$. 

The notation $(\cdot,\ldots, \cdot)$ is understood to mean the column vector of
the entries listed inside the brackets.

For an element $\alpha\in\Z/q\Z$, we write
$\wt \alpha$ for the unique representative of $\alpha$ in $(-q/2,q/2]\cap\Z$.
When $x$ is a vector or matrix with entries in $\Z/q\Z$, $\wt x$ denotes
the above operation applied entry-wise. 

As mentioned above, we extend the absolute value on $\Z$ to $a\in\Z/q\Z$ by $|a|=|\wt a|$.

\subsection*{Acknowledgements}
We are grateful to Peter Sarnak for helpful discussions on the subject of this paper.
We thank the referees for their careful reading of the paper and for helpful comments that
improved its presentation.

\section{Lifting Diagonal Elements}\label{sec:diagonal}

The purpose of this section is to prove \cref{prop:n-th root lemma}.
It follows immediately from the following lemma.

\begin{lemma}\label{lem:lifting diagonal}
Let $x=\operatorname{diag}(\beta \alpha^{-1},\beta,\ldots,\beta)\in\SL_n(\Z/q^{2}\Z)$
for some $\beta,\alpha\in\Z/q^{2}\Z$. If $\gamma\in\SL_n(\Z)$ is such
that $\gamma\equiv x\mod q$, then
\[
\alpha a_{1}+a_{2}+\ldots+a_{n}\equiv n\beta \mod q^2,
\]
where $a_{1},\ldots,a_{n}$ are the diagonal entries of $\gamma$.

In particular,
\[
\n{\gamma}\ge\left|n\beta\right|/n|\alpha|.
\]
\end{lemma}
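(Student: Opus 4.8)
The key observation is that $\SL_n$, as an affine variety, is cut out by the single polynomial equation $\det = 1$, and the determinant is differentiable. So if $\gamma$ is an integer matrix reducing to $x \bmod q$, write $\gamma = x + qM$ where now I think of $x$ as a genuine integer matrix (lifting the diagonal entries $\beta\alpha^{-1}, \beta, \ldots, \beta$ arbitrarily to $\Z$) and $M \in M_n(\Z)$. Expanding $\det(x + qM) = 1$ and reducing modulo $q^2$, every term involving two or more entries of $qM$ is divisible by $q^2$ and vanishes. What survives is $\det(x) + q \sum_{i,j} (\text{cofactor}_{ij}(x))\, M_{ij} \equiv 1 \pmod{q^2}$. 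The cofactor matrix of the diagonal matrix $x$ is again diagonal, with $(i,i)$ entry equal to the product of the other diagonal entries; for $i=1$ this is $\beta^{n-1}$, and for $i \ge 2$ it is $\beta\alpha^{-1}\cdot\beta^{n-2} = \beta^{n-1}\alpha^{-1}$. Hence only the diagonal entries $M_{ii}$ of $M$ contribute, and since $a_i = x_{ii} + q M_{ii}$ (so $q M_{ii} = a_i - x_{ii}$), substituting gives a linear relation among the $a_i$ modulo $q^2$.

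The second step is bookkeeping to turn this into the stated congruence. From $\det(x) + q(\beta^{n-1} M_{11} + \beta^{n-1}\alpha^{-1}(M_{22} + \cdots + M_{nn})) \equiv 1$, multiply through by $\alpha$ (here I am working modulo $q^2$, where $\alpha$ need not be invertible, so I should instead clear denominators from the start by choosing the lift of the first diagonal entry to be an integer $c$ with $\alpha c \equiv \beta \pmod{q^2}$, i.e.\ absorbing the $\alpha^{-1}$; alternatively, and more cleanly, note $\det(x) = \beta^n$ if I lift all entries compatibly). The cleanest route: multiply the whole relation by $\alpha$ and use $qM_{11} = a_1 - x_{11}$, $qM_{ii} = a_i - x_{ii}$, together with $\alpha x_{11} \equiv \beta \pmod{q^2}$ (choosing that lift) and $x_{ii} = \beta$ for $i\ge2$, to collapse everything to $\beta^{n-1}(\alpha a_1 + a_2 + \cdots + a_n) \equiv \beta^{n-1} \cdot n\beta \pmod{q^2}$. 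Since $\beta \in (\Z/q^2\Z)^\times$ in the intended application — and more generally one only needs $\beta^{n-1}$ to be cancellable, which holds when $\beta$ is a unit — one divides by $\beta^{n-1}$ to obtain $\alpha a_1 + a_2 + \cdots + a_n \equiv n\beta \pmod{q^2}$.

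The main obstacle, and the only genuinely delicate point, is the handling of the non-invertible quantities modulo $q^2$: $\alpha$ and $\beta$ are a priori only elements of $\Z/q^2\Z$, and the cofactor computation naively produces $\alpha^{-1}$. I would circumvent this entirely by never inverting: fix integer lifts so that the first diagonal entry of $x$ is some $c\in\Z$ with $\alpha c \equiv \beta \pmod{q^2}$ (this is exactly the condition that makes $\det(x)\equiv 1$, hence consistent with $x\in\SL_n(\Z/q^2\Z)$), take the other diagonal entries to be the chosen lift $\beta$, and take all off-diagonal entries $0$; then $\det x = c\beta^{n-1}$ and the linearized determinant identity reads $c\beta^{n-1} + q(\beta^{n-1}(M_{11} + \cdots) ) \equiv \ldots$ — carrying $c$ rather than $\beta\alpha^{-1}$ throughout. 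Finally, for the "in particular" bound: the congruence forces $\alpha a_1 + a_2 + \cdots + a_n \not\equiv 0 \pmod{q^2}$ to have a representative of absolute value $|n\beta|$, so $|\alpha a_1 + \cdots + a_n| \ge |n\beta|$ (using the chosen representative of $n\beta$, or rather: some integer in the class $\alpha a_1 + \cdots + a_n - n\beta$ is a nonzero multiple of $q^2$... more carefully, $\alpha a_1 + \cdots + a_n$ lies in the class $n\beta \bmod q^2$, so its absolute value is at least the minimal absolute value in that class, which by definition is $|n\beta|$, provided $|n\beta| \le q^2/2$; otherwise use the complementary estimate). Then by the triangle inequality $|n\beta| \le |\alpha|\,|a_1| + |a_2| + \cdots + |a_n| \le (|\alpha| + (n-1))\max_i |a_i| \le n|\alpha|\,\|\gamma\|$ since $|\alpha|\ge 1$ and each $|a_i| \le \|\gamma\|$, giving $\|\gamma\| \ge |n\beta|/(n|\alpha|)$ as claimed.
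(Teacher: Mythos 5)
Your proposal is correct and follows essentially the same route as the paper: expand $\det(x+qM)$ to first order in $q$ (the paper does this by directly multiplying out the diagonal entries, which for a diagonal $x$ is exactly your cofactor expansion), use $\det x=1$ to isolate the linear congruence on the diagonal entries, and finish with the triangle inequality. Your care about inverting $\alpha$ and $\beta$ is not actually needed — since $x\in\SL_n(\Z/q^2\Z)$ is diagonal with entries $\beta\alpha^{-1},\beta,\ldots,\beta$, both are forced to be units — but your workaround via the lift $c$ with $\alpha c\equiv\beta$ is harmless and lands on the same computation.
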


This can be deduced from the fact that the preimage of
an element $y\in \SL_n(\Z/q\Z)$ in $\SL_n(\Z/q^2\Z)$ can be identified with
$y'+qy\cdot\mathfrak{sl}_n(\Z/q\Z)$, where $y'$ is an arbitrary lift.
However, we give an elementary proof for the reader's convenience.

First, we record a trivial fact in the following lemma.

\begin{lemma}\label{lem:trivial lemma}
If $\left|a_{1}\alpha_{1}+\ldots+a_{n}\alpha_{n}\right|\ge T$ for some
$a_j,\alpha_j\in\Z/Q\Z$ and some $T,Q\in\Z_{>0}$, then
\[
\max\{\left|a_{1}\right|,\ldots,|a_{n}|\}\ge\frac{T}{n\max\{|\alpha_{1}|,\ldots,\left|\alpha_{n}\right|\}}.
\]
\end{lemma}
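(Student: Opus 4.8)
This is the ``trivial fact'' alluded to in its name, so the plan is simply to unwind the definition of the absolute value on $\Z/Q\Z$ and apply the triangle inequality. First I would record the key elementary property of $|\cdot|$ on $\Z/Q\Z$: for any $a,b\in\Z/Q\Z$ one has $|ab|\le |a|\,|b|$. Indeed, choosing integer representatives $a',b'$ with $|a'|=|a|$ and $|b'|=|b|$, the integer $a'b'$ represents the class $ab$, so $|ab|\le|a'b'|=|a||b|$. (One should note the representatives lie in $(-Q/2,Q/2]$, so $|a'b'|$ can exceed $Q/2$, but that is harmless: $|ab|$ is the \emph{minimum} over representatives and $a'b'$ is one of them.) Similarly, subadditivity $|a+b|\le|a|+|b|$ holds by the same reasoning with the representatives $a'+b'$.

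Given these two facts the proof is immediate. Set $M=\max\{|a_1|,\ldots,|a_n|\}$ and $N=\max\{|\alpha_1|,\ldots,|\alpha_n|\}$. Then
\[
T\le\left|a_1\alpha_1+\ldots+a_n\alpha_n\right|\le\sum_{j=1}^n|a_j\alpha_j|\le\sum_{j=1}^n|a_j|\,|\alpha_j|\le nMN,
\]
and dividing by $nN$ gives $M\ge T/(nN)$, which is exactly the claimed inequality. (If $N=0$ then every $\alpha_j=0$, forcing $T\le 0$, contradicting $T\ge 1$; so the division is legitimate.)

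There is essentially no obstacle here — the only thing to be slightly careful about is that the submultiplicativity and subadditivity of $|\cdot|$ on $\Z/Q\Z$ are stated correctly and used in the right direction, since $|\cdot|$ is a minimum over representatives rather than attained at a canonical one. Once that is in place, the lemma follows in one line. I would present the submultiplicativity observation as a parenthetical remark (or fold it into the proof of Lemma~\ref{lem:lifting diagonal}, where the same estimate $|a_j\alpha|\ge\cdots$ is implicitly needed) and keep the proof of this lemma to the two displayed steps above.
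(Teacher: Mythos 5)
Your proof is correct: the paper states this lemma without proof (calling it a trivial fact), and your argument via subadditivity and submultiplicativity of the absolute value on $\Z/Q\Z$, followed by the one-line triangle-inequality chain, is exactly the intended justification. The care you take with the representative-based definition of $|\cdot|$ and with the degenerate case $N=0$ is appropriate and nothing is missing.
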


\begin{proof}[Proof of \cref{lem:lifting diagonal}]
Write
\[
a_{1}\equiv\beta \alpha^{-1}+qb_{1},\quad a_{i}\equiv\beta+qb_{i}\mod q^2,
\]
for $2\le i\le n$.
Then $\det\gamma=1$ implies 
\begin{align*}
1 & \equiv(\beta \alpha^{-1}+qb_{1})\prod_{i=2}^{n}(\beta+qb_{i})\mod q^2\\
 & \equiv\beta^{n}\alpha^{-1}+q\left(b_{1}\beta^{n-1}+\sum_{i=2}^{n}b_{i}\beta^{n-1}\alpha^{-1}\right)\mod q^{2}
\end{align*}

This implies 
\[
\alpha b_{1}+b_{2}+\ldots+b_{n}\equiv0\mod q,
\]
and 
\begin{align*}
\alpha a_{1}+a_{2}+\ldots+a_{n} & \equiv \alpha\beta \alpha^{-1}+(n-1)\beta+q\left(\alpha b_{1}+b_{2}+\ldots+b_{n}\right)\\
 & \equiv n\beta\mod q^{2}.
\end{align*}

The last part follows from \cref{lem:trivial lemma}.
\end{proof}

\subsection{Sarnak's example}\label{sec:sarnak expample}
We discuss Sarnak's example in \cite{sarnak2015lettermiller} for an element of $\SL_2(\Z/2^k\Z)$ without
small lifts in $\SL_2(\Z)$, which we mentioned in the introduction.

Let $n=2$, and let $q=8m$ for some $m\in\Z_{>0}$.
(Sarnak takes $m$ as a power of $2$, but this works in general.) 
Consider the element
\[
x=\left(\begin{array}{cc}
	1-4m+16m^2\\
	& 1+4m
\end{array}\right)\in\SL_2(\Z/q^2\Z).
\]
The fact that $x$ has determinant $1$ over $\Z/q^2\Z$ follows
from $q^2|(4m)^3$.
We will show that $\gamma\equiv x\mod q$ for $\gamma\in\SL_2(\Z)$
implies
\[
\tr(\gamma)\equiv \tr(x)\equiv 2+16 m^2\mod q^2.
\]
We see from this immediately that any lift of $x\mod q$ in $\SL_2(\Z)$ must
have some entries of size at least $8m^2= q^2/8$.

The claim we made is closely related to \cref{lem:lifting diagonal}, and indeed it
can be even deduced from that lemma.
Taking $\beta=1+4m\in\Z/q^2\Z$, we note that $\alpha:=\beta^2=1+8m+16m^2$ and
$\beta \alpha^{-1}=\beta^{-1}=1- 4m +16m^2$.
Now \cref{lem:lifting diagonal} gives us
\[
\alpha a_1+a_2\equiv 2+ 8m \mod q^2,
\]
where $a_1$ and $a_2$ are the diagonal entries of $\gamma$.
We observe that
\[
(\alpha-1)a_1\equiv (8m+16m^2)\cdot (1- 4m) \equiv 8m -16 m^2 \mod q^2.
\]
(Here we used $q|8m+16m^2$, and $(1-4m)\equiv a_1\mod q$.)
Taking the difference of the last two
congruences, we get $\tr(\gamma)\equiv2+16m^2\mod q^2$ as claimed.

One may also generalize this argument to $n\ge3$ by considering $q$ of the form $q=n^{3}m$, but we omit the details.

\section{Finding Large n-th Roots}\label{sec:large roots}

In this section, we prove \cref{thm:large roots}. Throughout this section, $n$ and $k$ are fixed, and all the constants may depend on them.
The proof splits into 2 cases. 
In the main case, we need a sufficient supply of small primes $p$ not dividing $q$, and also want sufficiently many $n$-th powers in $\Z/q\Z$. 
Both conditions are problematic if $q$ is divisible by many primes. Luckily, when the conditions fail, there is a different method, which we discuss first. 

Let $P$ be the set of primes with $\gcd(p-1,n)>1$ and $p\nmid n$.
(For example, if $n$ is even, $P$ is the set of all odd primes not dividing $n$). 
The set $P$ is the set of primes not dividing $n$, and such that there is a non-trivial $n$-th root of unity in $(\Z/p^m\Z)^\times$ for every $m>0$. We consider $-1$ to be non-trivial here.

We say that $q$ has a \emph{small $P$ factor} if, in the decomposition into primes $q=p_{1}^{m_{1}}\cdot\ldots\cdot p_{l}^{m_{l}}$, there is $p_{j}\in P$ with $p_{j}^{m_{j}}<q^{1/k}$. 
\begin{prop}\label{prop:small P factor}
Assume that $q$ has a small $P$ factor. Then there is $\beta\in(\Z/q\Z)^{\times}$
such that $\beta^{n}=1$ and $\left|n\beta\right|>q^{1-1/k}-n$. 
\end{prop}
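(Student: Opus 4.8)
The plan is to manufacture $\beta$ from a non-trivial $n$-th root of unity at the small prime-power factor, using the Chinese Remainder Theorem. Write $q=p_{1}^{m_{1}}\cdots p_{l}^{m_{l}}$ and let $p:=p_{j}$, $m:=m_{j}$ be chosen so that $p\in P$ and $p^{m}<q^{1/k}$. Put $q':=q/p^{m}$, so that $q=p^{m}q'$ with $\gcd(p^{m},q')=1$ and, crucially, $q'>q/q^{1/k}=q^{1-1/k}$. By the description of $P$ recalled above, there is a non-trivial $n$-th root of unity $\zeta\in(\Z/p^{m}\Z)^{\times}$, meaning $\zeta^{n}=1$ and $\zeta\neq1$. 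Via the isomorphism $\Z/q\Z\cong\Z/p^{m}\Z\times\Z/q'\Z$ let $\beta\in(\Z/q\Z)^{\times}$ be the element corresponding to $(\zeta,1)$; then $\beta^{n}$ corresponds to $(\zeta^{n},1)=(1,1)$, so $\beta^{n}=1$ in $\Z/q\Z$.

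For the lower bound on $|n\beta|$, I would first observe that the $q'$-component of $\beta$ is $1$, so $\beta\equiv1\pmod{q'}$ and hence $n\beta\equiv n\pmod{q'}$. Pick $a\in\Z$ with $a\equiv n\beta\pmod q$ and $|a|=|n\beta|$; then $q'\mid(a-n)$. If $a\neq n$, then $|a-n|\ge q'$, whence $|n\beta|=|a|\ge|a-n|-n\ge q'-n>q^{1-1/k}-n$, which is exactly what is claimed. So it suffices to rule out the case $a=n$. But $a=n$ would mean $n\beta\equiv n\pmod q$, i.e.\ $q\mid n(\beta-1)$; projecting onto the $p^{m}$-component gives $p^{m}\mid n(\zeta-1)$, and since $p\nmid n$ (because $p\in P$) we get $p^{m}\mid(\zeta-1)$, i.e.\ $\zeta\equiv1\pmod{p^{m}}$, contradicting the non-triviality of $\zeta$.

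There is no genuinely hard step here: the argument is elementary once the right $\beta$ is identified. The only external ingredient is the (standard) fact quoted above that a prime $p$ with $p\nmid n$ and $\gcd(p-1,n)>1$ admits a non-trivial $n$-th root of unity in $(\Z/p^{m}\Z)^{\times}$ for every $m$ — and it matters that $-1$ is counted as non-trivial, so that for $n$ even one may simply take $\zeta=-1$ at any odd prime. The single place where the hypothesis enters is in securing $q'>q^{1-1/k}$, which is precisely what $p^{m}<q^{1/k}$ says; this is what makes the congruence $n\beta\equiv n\pmod{q'}$ strong enough to force $a=n$ in the ``short vector'' case and thereby produce the contradiction.
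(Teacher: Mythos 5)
Your proof is correct and follows essentially the same route as the paper: both construct $\beta$ by CRT as $(\zeta,1)$ with $\zeta$ a non-trivial $n$-th root of unity modulo the small $P$-factor $p^{m}$, and both derive the lower bound from $n\beta\equiv n\pmod{q'}$ together with the observation that $n\beta\not\equiv n\pmod{p^{m}}$ (which uses $p\nmid n$). Your handling of the final estimate via the minimal representative $a$ is, if anything, slightly more careful than the paper's.
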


\begin{proof}
Without loss of generality, we assume that $p_{1}^{m_{1}}<q^{1/k}$ and
$p_{1}\in P$. Denote $q'=q/p_{1}^{m_{1}}$ and notice that $q'> q^{1-1/k}$. 

Let $1\ne a\in\Z/p_{1}^{m_{1}}\Z$ be such that $a^{n}\equiv1\mod p_{1}^{m_{1}}$.
Using the Chinese remainder theorem, choose $\beta\in\left(\Z/q\Z\right)^{\times}$
such that 
\begin{align*}
\beta & \equiv1\mod q'\\
\beta & \equiv a\mod p_{1}^{m_{1}}
\end{align*}

Then $\beta^{n}\equiv1\mod q$, and we can write $n\beta=n+a'q'$
for some $|a'|\le p_{1}^{m_{1}}/2$.
The important fact is that $a'\ne0$, because $n\beta\not\equiv n\mod p^{m_1}$.
Then $|n\beta|\ge\left|n+a'q'\right|\ge q'-n> q^{1-1/k}-n$.
\end{proof}

\cref{prop:small P factor} handles the case when $q$ has a small $P$ factor.
From now on we assume that $q$ has no small $P$ factors.
We begin with an outline of the argument.
The first step is to find a supply of small $n$-th powers in $\Z/q\Z$,
which is the purpose of the next lemma.

\begin{lemma}
\label{lem:Enough residues}For every $m$, there is $C>0$ such that the following holds.
Let $q\in \Z_{>C}$ be without small P factors.
Then there
are $\alpha_{1},\ldots,\alpha_{m}\in(\Z/q\Z)^{\times}$ such that each
$\alpha_{i}$ has an $n$-th root $\beta_{i}\in(\Z/q\Z)^\times$, (i.e., $\beta_{i}^{n}=\alpha_{i}$),
$0<\wt\alpha_{i}<C$ and  $(\wt\alpha_{i}/\wt\alpha_{j})^{1/n}\notin\Q$ for $i\ne j$.
\end{lemma}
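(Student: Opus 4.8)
The plan is to produce the $\alpha_i$ by the Chinese remainder theorem, building each one as a product of small prime powers, using the hypothesis that $q$ has no small $P$-factors to guarantee that enough primes in $P$ divide $q$ to a large power. First I would fix a bound and recall what ``no small $P$ factors'' buys us: writing $q = p_1^{m_1}\cdots p_l^{m_l}$, every prime $p_j \in P$ appearing in $q$ satisfies $p_j^{m_j} \ge q^{1/k}$, so at most $k$ of the prime-power factors can come from $P$; hence the part of $q$ supported on $P$-primes is large, and in particular (for $q$ large) there are plenty of primes $p \in P$ with $p \mid q$ and $p^{\mathrm{ord}_p(q)}$ huge — more precisely, for any fixed bound we can find as many such primes as we like once $q$ exceeds a threshold $C$, since otherwise $q$ would be a product of at most $k$ large prime powers times a bounded number of small ones, contradicting $q$ being large unless one of those factors is itself large, and a large $P$-prime power is exactly what we want.

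Next, for each such $P$-prime $p \mid q$ with $p \nmid n$ and $\gcd(p-1,n) > 1$, the group $(\Z/p\Z)^\times$ is cyclic of order divisible by $\gcd(p-1,n) \ge 2$, so there is a nontrivial $n$-th root of unity mod $p$, and by Hensel's lemma (using $p \nmid n$) it lifts to a nontrivial $\zeta_p \in (\Z/p^{\mathrm{ord}_p(q)}\Z)^\times$ with $\zeta_p^n = 1$. The idea is then: choose $m$ distinct $P$-primes $p^{(1)},\dots,p^{(m)}$ dividing $q$, and for the $i$-th residue take a small integer $t_i > 1$ (say the $i$-th prime, or $i+1$) that is coprime to $q$, and use CRT to define $\beta_i \in (\Z/q\Z)^\times$ by $\beta_i \equiv \zeta_{p^{(i)}} \pmod{p^{(i)\,\mathrm{ord}}}$, $\beta_i \equiv t_i \pmod{(\text{the rest of }q)}$ — wait, that does not make $\alpha_i = \beta_i^n$ small. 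Instead I would set things up so that $\alpha_i := \beta_i^n$ is forced to equal a fixed \emph{small integer} in $\Z$: pick a small integer $c_i$ which is an $n$-th power residue mod every prime power in $q$ — for instance $c_i = s_i^n$ for small integers $s_i$ — then choose $\beta_i$ via CRT to be a specified $n$-th root of $c_i$ modulo each $p_j^{m_j}$, taking the root to be $s_i \cdot \zeta$ (with $\zeta$ a nontrivial $n$-th root of unity) at one distinguished prime $p^{(i)}$ and $s_i$ elsewhere; then $\beta_i^n \equiv c_i \pmod q$ with $c_i \in \Z$ small, giving $|\alpha_i| \le c_i < C$, and $\alpha_i = s_i^n$ as an integer so that $(\alpha_i/\alpha_j)^{1/n} = s_i/s_j \notin \Q$ fails — so I must instead make the $\alpha_i$ themselves pairwise non-proportional-up-to-$n$-th-powers as integers, which is automatic if I take $c_i$ to be, e.g., the $i$-th prime $\ell_i$ (coprime to $q$, which excludes finitely many, absorbed into $C$): then $\alpha_i = \ell_i$ as integers, $|\alpha_i| = \ell_i < C$, and $(\ell_i/\ell_j)^{1/n} \notin \Q$ since $\ell_i,\ell_j$ are distinct primes. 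For this to work I need each $\ell_i$ to be an $n$-th power residue modulo every prime power dividing $q$; this is not free, so the actual construction should be: take $\beta_i$ with $\beta_i^n = \alpha_i$ where $\alpha_i$ is \emph{defined} as the integer in $(-q/2, q/2]$ represented by $\beta_i^n \bmod q$, and arrange $\beta_i$ so this integer is small and the $\alpha_i$ are multiplicatively independent mod $n$-th powers.

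The cleanest route, which I would actually write up: use the nontrivial $n$-th roots of unity $\zeta^{(1)}, \dots$ at $m$ distinct distinguished $P$-primes $p^{(1)},\dots,p^{(m)} \mid q$. For small integers $1 = u_1 < u_2 < \cdots$ (coprime to $q$; only finitely many excluded, fold into $C$), define $\beta_i$ by CRT: $\beta_i \equiv u_i \zeta^{(i)} \pmod{p^{(i)\,\mathrm{ord}}}$ and $\beta_i \equiv u_i \pmod{p_j^{m_j}}$ for all other $j$. Then $\beta_i^n \equiv u_i^n \pmod{p_j^{m_j}}$ for every $j$ (since $(\zeta^{(i)})^n = 1$), hence $\alpha_i := \beta_i^n \equiv u_i^n \pmod q$, so $|\alpha_i| \le u_i^n < C$ with $C$ depending only on $m$ and $n$; and when the $\alpha_i$ are lifted to $\Z$ one can take the lift $u_i^n$, so $(\alpha_i/\alpha_j)^{1/n} = u_i/u_j \notin \Q$ precisely when $u_i/u_j$ is not an integer, which I ensure by choosing the $u_i$ so that no one divides another (e.g. $u_i$ = distinct primes). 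The main obstacle is the existence step: showing that ``no small $P$-factor'' plus ``$q > C$'' forces at least $m$ distinct primes of $P$ to divide $q$. I expect to argue by contradiction: if fewer than $m$ primes of $P$ divide $q$, then the $P$-part of $q$ is a product of fewer than $m$ prime powers, and since none is small ($< q^{1/k}$) is vacuous in the wrong direction — rather, I use that the \emph{non}-$P$-primes dividing $q$ are primes $p$ with $\gcd(p-1,n)=1$ or $p \mid n$, and there are structural limits here only when $n$ has few prime factors; the honest statement is that if $q$ has no small $P$-factor then either $q$ is bounded (handled by choosing $C$ large) or $q$ has many $P$-prime divisors, and the quantitative version of this — that a $q$ with no small $P$-factor and only boundedly many $P$-prime divisors is itself bounded — needs a short argument comparing $q^{1/k}$ with the bounded product of the non-$P$-part. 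I would isolate this as the crux and prove it first.
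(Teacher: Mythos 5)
There are two genuine gaps here, and the second is fatal to the construction as written. First, the ``crux'' you isolate at the end --- that ``no small $P$-factor'' plus $q>C$ forces at least $m$ distinct primes of $P$ to \emph{divide} $q$ --- is false. The hypothesis implies the \emph{opposite}: each $P$-prime-power factor of $q$ is $\ge q^{1/k}$, so \emph{at most $k$} primes of $P$ divide $q$; and $q$ may have no $P$-prime divisors at all (e.g.\ $q=2^a$ when $n$ is even, or $q=p^a$ with $\gcd(p-1,n)=1$ when $n$ is odd), while still being arbitrarily large with no small $P$-factor. So your entire supply of distinguished primes $p^{(1)},\dots,p^{(m)}\mid q$ carrying nontrivial $n$-th roots of unity need not exist. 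The paper uses the hypothesis the other way around: since at most $k$ primes of $P$ divide $q$, Dirichlet gives $m+k$ small primes in $P$, of which at least $m$ are \emph{coprime} to $q$; and, separately, the hypothesis bounds the index of $(\Z/q\Z)^{\times n}$ in $(\Z/q\Z)^{\times}$ by a constant $C\le(2n)^{\omega(n)+k}$ (the local quotient is trivial at every prime $p\mid q$ with $p\nmid n$ and $\gcd(p-1,n)=1$, and there are at most $\omega(n)+k$ other primes). A pigeonhole on the images of the $m+C$ small primes under $f\colon(\Z/q\Z)^{\times}\to(\Z/q\Z)^{\times}/(\Z/q\Z)^{\times n}$ then produces $\alpha_j=p_jp_a^{\,n-1}$ with $f(\alpha_j)=1$, small, and with pairwise ratios that are not rational $n$-th powers because a prime $p_i\ne p_j$ occurs to exponent $1$ in $\alpha_i/\alpha_j$.

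Second, even granting the existence of the distinguished primes, your ``cleanest route'' does not satisfy the conclusion of the lemma: it yields $\alpha_i\equiv u_i^n\pmod q$ with integer lift $u_i^n$, so $(\alpha_i/\alpha_j)^{1/n}=u_i/u_j\in\Q$ always. Your proposed fix --- choosing the $u_i$ so that none divides another --- confuses ``$\notin\Q$'' with ``$\notin\Z$''; a ratio of integers is rational regardless of divisibility. The alternative you float (taking $\alpha_i=\ell_i$ a small prime) would meet the irrationality requirement, but then you correctly note that you must show $\ell_i$ is an $n$-th power residue modulo every prime power dividing $q$, and you never do; this is exactly the step the paper's pigeonhole argument on the bounded quotient $(\Z/q\Z)^{\times}/(\Z/q\Z)^{\times n}$ is designed to supply (at the cost of multiplying by $p_a^{\,n-1}$, which preserves irrationality of the ratios). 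In short, the roots-of-unity-at-primes-dividing-$q$ idea belongs to the complementary case already handled by \cref{prop:small P factor}; for the present lemma you need the ``few $P$-divisors, hence many $n$-th powers'' reading of the hypothesis.
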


Recall that $\wt\alpha$ denotes the smallest (in absolute value)
integer lift of an element $\alpha\in\Z/q\Z$.

We defer the proof to \cref{sec:n powers} and continue with our outline.
Our strategy is to show that the set
\begin{equation}\label{eq:set A}
\{\beta_1^{a_1}\cdots\beta_m^{a_m}: 0\le a_i<n\}
\end{equation}
contains an element $\beta$ such that $|n\beta|$ is large, if $m$ is chosen sufficiently big in terms of $n$ and $k$,
and where $\beta_1,\ldots,\beta_m$ are the elements from the previous lemma.
Clearly, all elements of \eqref{eq:set A} satisfy that $\beta^n$ is small, so we will be done.

To achieve our aim, we need the notion of Bohr sets.
For an arbitrary set $A\subset\Z/q\Z$ of frequencies and for $\rho\in[0,1]$,
we write
\[
B(A,\rho)=\{x\in\Z/q\Z: |xa|<\rho q\text{ for all $a\in A$}\},
\] 
and call it a Bohr set.

We will consider the Bohr set $B(A,\rho)$ with the set \eqref{eq:set A} in the role
of $A$ and with $\rho= q^{-1/k}$.
A naive heuristic argument suggests that such a Bohr set should be of size
approximately $\rho^{|A|}q$, so $B(A,\rho)$ should be trivial, that is equal to
$\{0\}$, provided $A$
is sufficiently large.
We will show that this is indeed the case for our choice of $A$ if $m$
is chosen large enough.
This will finish our proof because $n\notin B(A,\rho)$ means that there is some
$a\in A$ with $|na|>\rho q= q^{1-1/k}$.

It is well known that Bohr sets are closely approximated by generalized arithmetic progressions (\cite[\S~4.22]{tao2006additive}). 
We will need the following version.

\begin{lemma}[Bohr set lemma]
\label{lem:Bohr Set Lemma}
For each $k\in\Z_{>0}$ and $c>0$, there is a constant $C$ such that 
the following holds.
Let $A\subset\Z/q\Z$ be such that $\gcd(A\cup\{q\})=1$
and let $\rho<C^{-1}q^{-1/(k+1)}$.
Then there is $C^{-1}\rho<\rho'\le\rho$,
$r\le k$ and $u_{1},\ldots,u_{r}\in\Z/q\Z$, $l_{1},\ldots,l_{r}\in\Z_{>0}$
such that
\begin{enumerate}
\item $|u_{i}a|\le c\rho'q$ for all $a\in A$, i.e., $u_{i}\in B(A,c\rho')$,
\item $B(A,\rho')\subset\left\{ \sum_{i=1}^{r}n_{i}u_{i}:|n_{i}|\le l_{i}\right\} \subset B(A,C\rho')$,
\item if $\left|n_{i}\right|\le C^{-1}l_{i}/\rho'$ and $\sum_{i=1}^{r}n_{i}u_{i}\in B(A,t\rho')$
for some $1\le t\le{\rho'}^{-1}/2$, then $\left|n_{i}\right|\le Ctl_{i}$, and
\item the map $(n_{1},\ldots,n_{r})\to\sum_{i=1}^{r}n_{i}u_{i}$ is injective
for $\left|n_{i}\right|\le C^{-1}l_{i}/\rho'$.
\end{enumerate}
\end{lemma}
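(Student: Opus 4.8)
The plan is to follow the standard approach to approximating Bohr sets by generalized arithmetic progressions via the geometry of numbers, but to use the specific hypothesis $\rho < C^{-1}q^{-1/(k+1)}$ to control the rank. First I would set up the lattice $\Lambda \subset \Z^{|A|+1}$ consisting of tuples $(x, (m_a)_{a\in A})$ with $xa \equiv m_a \pmod q$, or rather work with the image of $\Z/q\Z$ under the map $x \mapsto (xa/q)_{a\in A} \in (\R/\Z)^{|A|}$, and consider the convex symmetric body $K_\rho$ which is the product of intervals $[-\rho,\rho]$ in each coordinate. The Bohr set $B(A,\rho)$ is then identified with the lattice points inside $K_\rho$. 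I would apply Minkowski's second theorem to find successive minima $\lambda_1 \le \cdots \le \lambda_d$ (where $d$ is the rank of the relevant lattice, $d \le |A|$) and directions $v_1, \ldots, v_d$; the point is that only those $\lambda_i$ with $\lambda_i \le 1$ (equivalently, with a genuine lattice point in a bounded dilate of $K_\rho$) contribute, and I would show there are at most $k$ of them using $\rho$ small: heuristically $\prod \lambda_i \asymp \vol / \det \asymp \rho^d q$, and if $d \ge k+1$ this forces some $\lambda_i$ large unless... — more carefully, the bound $\rho < C^{-1}q^{-1/(k+1)}$ ensures that $\rho^{k+1} q < C^{-(k+1)}$, so a progression of rank $k+1$ with all side-lengths $\ge 1$ cannot fit inside $B(A,C\rho)$ once $C$ is large, giving $r \le k$.

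The generators $u_i$ should be taken as the lattice points realizing the first $r$ successive minima (suitably reduced to a basis of the sublattice they span, à la the proof that successive minima directions can be taken to form a basis up to bounded index), and $l_i$ roughly $\lfloor C^{-1}\lambda_i^{-1}\rfloor$ or the largest integer with $l_i \lambda_i \lesssim 1$. Item (1), that $u_i \in B(A,c\rho')$, follows because $u_i$ lies in a bounded multiple of $K_{\rho'}$ and we can shrink $\rho'$ by a bounded factor — this is where the freedom $C^{-1}\rho < \rho' < \rho$ is used, choosing $\rho'$ so that the (finitely many, boundedly many) relevant scales separate correctly. Item (2) is the two-sided containment: the lower inclusion $B(A,\rho') \subset P$ uses that every lattice point in $K_{\rho'}$ is a bounded-coefficient combination of the $u_i$ (standard consequence of second Minkowski plus the basis property), and the upper inclusion $P \subset B(A,C\rho')$ is immediate from $u_i \in B(A,c\rho')$ and the triangle inequality $|{\sum n_i u_i a}| \le \sum |n_i|\,|u_i a| \le (\sum l_i) c\rho' q \le C\rho' q$. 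Items (3) and (4) are the "dilation" and "injectivity" properties: (4) holds because a nonzero combination $\sum n_i u_i$ with $|n_i| \le C^{-1}l_i/\rho'$ would be a short nonzero lattice vector whose $K_\rho$-norm is much less than $\rho$ yet (if it vanishes mod $q$) it must be $0$; quantitatively, the coordinates $n_i u_i a/q$ stay within a small box and the injectivity of $x \mapsto (xa/q)$ on a fundamental domain forces uniqueness. Property (3) is the corresponding robust statement: if $\sum n_i u_i \in B(A,t\rho')$ with $|n_i|$ not too large, then decomposing in the $v_i$-coordinates and using that the $\lambda_i$ are separated from the "large" minima by the rank bound, one recovers $|n_i| \le Ct l_i$.

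The main obstacle I expect is \emph{making the rank bound $r \le k$ genuinely independent of $m = |A|$}. The naive volume heuristic $|B(A,\rho)| \approx \rho^{|A|}q$ is false in general (the coordinates $xa/q$ are highly correlated since $A$ is not a free set), so one cannot simply count. The correct mechanism is: the lattice $\Lambda$ — or really the projection relevant to $B(A,\rho)$ — has covolume dividing $q$ (after quotienting by the obvious rank-one kernel direction when $\gcd(A\cup\{q\})=1$), its successive minima multiply to something comparable to that covolume by Minkowski, and a minimum $\lambda_i < 1$ forces $\lambda_1\cdots\lambda_i \gtrsim$ (covolume of the rank-$i$ sublattice) which is $\ge 1$; combined with each such $\lambda_j \le \rho \cdot(\text{bounded})$ this yields $\rho^{i} q \gtrsim 1$, i.e. $i \le \log q / \log(1/\rho) \le k+1$, hence $\le k$ after absorbing constants into $C$. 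Getting the constants in this chain to be clean (and to handle the possibility of a nontrivial kernel, quotient by it using $\gcd(A\cup\{q\})=1$) is the delicate bookkeeping; everything else is the textbook Bohr-set-to-GAP argument as in \cite[\S 4.22]{tao2006additive}, adapted to track the $\rho'$-shrinking and the dilation property (3).
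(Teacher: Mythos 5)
Your plan follows essentially the same route as the paper's proof: the lattice $\Gamma=\Z^{A}+\{(xa/q)_{a\in A}:x\in\Z/q\Z\}$ in $\R^{A}$ with the box $K_\rho$; the observation that $\gcd(A\cup\{q\})=1$ makes $x\mapsto(xa/q)_{a\in A}+\Z^{A}$ injective, so $[\Gamma:\Z^{A}]=q$; the rank bound $r\le k$ forced by playing $\rho^{k+1}q\ll1$ against this index; a pigeonhole choice of $\rho'$ among boundedly many scales to stabilise the span; and Minkowski's second theorem to produce the progression. The paper packages the geometry of numbers as a single ``discrete John'' lemma and proves the rank bound by exhibiting more than $q$ distinct points of $K_{1/2}\cap\Gamma$, each in a different coset of $\Z^A$, which is a cleaner version of your covolume chain; but the mechanism is identical, so I will not count these as different approaches.

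One step as written does fail: the upper inclusion of item (2) is \emph{not} ``immediate from $u_i\in B(A,c\rho')$ and the triangle inequality.'' Your chain $\bigl|\sum n_iu_ia\bigr|\le\bigl(\sum_i l_i\bigr)c\rho'q$ does not give $C\rho'q$, because $\sum_i l_i$ is in general a power of $q$, not a constant (already for $A=\{1\}$ one has $r=1$, $u_1=1$ and $l_1\asymp\rho q$, so your bound is $\asymp\rho^2q^2\gg\rho q$). What is needed is the stronger per-generator bound $|u_ia|\lesssim\rho'q/l_i$, i.e.\ that $u_i$ realises the $i$-th successive minimum $\lambda_i$ of $K_{c\rho'}$ and $l_i\asymp\lambda_i^{-1}$, so that $|n_iu_ia|\le l_i\lambda_i\cdot c\rho'q\lesssim\rho'q$ and summing over $i\le r\le k$ costs only a constant. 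This is exactly the dilation property that the discrete John theorem (or your successive-minima setup) delivers, so the repair lives inside your own framework; but item (1) alone is genuinely too weak to imply the upper half of item (2), and your write-up should derive that inclusion from the successive-minima data rather than from item (1).
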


The proof of this lemma follows \cite{tao2006additive},
and we defer it to \cref{sec:Bohr},
but we point out an important feature of the above formulation.
Namely, that we can bound $r$, the rank of the generalized arithmetic progression,
independently of the size of the set of frequencies $A$.
This is possible, because of our assumption that the radius $\rho$ is very small.
This feature will be important for us because our argument will require us to choose
the set $A$ to be sufficiently large in terms of (the upper bound we have for) $r$.

We observe that the set \eqref{eq:set A}, which we use in the role of $A$,
is ``almost invariant" under multiplication by each
$\beta_i$, and we will show in the next lemma that this induces a dual ``almost-action"
on the Bohr set.

\begin{lemma}\label{lem:lifting beta}
For all $D_1,D_2\in \R_{>0}$, there is $C\in \R_{>0}$ such that the following holds.
Let $A\subset\Z/q\Z$ such that $\gcd(A\cup\{q\})=1$, and let $\alpha,\beta\in\Z/q\Z$
such that $\beta A\subset A\cup \alpha A$ and $|\alpha|\le D_1$.
Let $\rho\in(0,C^{-1})$, $r\in\Z_{>0}$, $u_1,\ldots,u_r\in\Z/q\Z$
and $l_1,\ldots, l_r\in\Z_{>0}$ be such that
\begin{enumerate}
\item $u_{i}\in B(A,D_1^{-1}\rho)$,
\item $B(A,\rho)\subset\left\{ \sum_{i=1}^{r}n_{i}u_{i}:|n_{i}|\le l_{i}\right\} \subset B(A,D_2\rho)$, and
\item If $\left|n_{i}\right|\le D_2^{-1}l_{i}/\rho$ and $\sum_{i=1}^{r}n_{i}u_{i}\in B(A,t\rho)$
for some $1\le t\le{\rho}^{-1}/2$, then $\left|n_{i}\right|\le D_2tl_{i}$.
\end{enumerate}

Then there is a matrix $B=(B_{i,j})\in M_r(\Z)$ such that
\[
\beta (u_1,\ldots, u_r)\equiv B(u_1,\ldots,u_r) \mod q
\]
and $|B_{i,j}|\le C l_j/l_i$ for all $i$, $j$.
(Recall that the notation $(u_1,\ldots,u_r)$ is understood as a column vector.)
\end{lemma}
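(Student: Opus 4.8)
The plan is to write down a natural candidate for $B$ coming from the inclusions in hypothesis (2), and then to upgrade the crude coefficient bound it carries to the sharp bound $|B_{ij}|\le Cl_j/l_i$ using the rigidity hypothesis (3). First I would check that $\beta u_i\in B(A,\rho)$ for each $i$: for $a\in A$ we have $(\beta u_i)a=u_i(\beta a)$, and $\beta a\in A\cup\alpha A$ by hypothesis, so if $\beta a\in A$ then $|u_i(\beta a)|\le D^{-1}\rho q$ by (1), while if $\beta a=\alpha a'$ with $a'\in A$ then $|u_i\alpha a'|\le|\alpha|\,|u_ia'|\le D\cdot D^{-1}\rho q=\rho q$, using $|\alpha|\le D$, (1), and the submultiplicativity $|xy|\le|x||y|$ of the absolute value on $\Z/q\Z$. (I take $D\ge1$ throughout, the case needed below; when $D<1$ one has $\alpha\equiv0$ and a simpler variant applies.) By the first inclusion in (2) I may then write $\beta u_i\equiv\sum_jB_{ij}u_j\mod q$ with integers $B_{ij}$ satisfying $|B_{ij}|\le l_j$; these assemble into a matrix $B\in M_r(\Z)$ with $\beta(u_1,\dots,u_r)\equiv B(u_1,\dots,u_r)\mod q$, and it remains to improve $|B_{ij}|\le l_j$ to $|B_{ij}|\le Cl_j/l_i$.

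To do this, fix $i$ and let $M$ be the largest integer with $1\le M\le l_i$ and $M|B_{ij}|\le D^{-1}l_j\rho^{-1}$ for all $j$; this set is nonempty since $\rho<C^{-1}\le D^{-1}$ puts $m=1$ in it. Because $M\le l_i$, hypothesis (2) places $Mu_i$ in the progression, hence in $B(A,D\rho)$, while (1) places it in $B(A,MD^{-1}\rho)$; rerunning the computation of the first paragraph with $Mu_i$ in place of $u_i$ then gives $\beta(Mu_i)\in B(A,t\rho)$ with $t:=\min\{M,D^2\}$, and $1\le t\le D^2\le\rho^{-1}/2$ provided $C\ge 2D^2$. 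Since $\beta(Mu_i)=M\beta u_i\equiv\sum_j(MB_{ij})u_j\mod q$ with coefficients $MB_{ij}$ in the admissible range of (3), hypothesis (3) yields $|MB_{ij}|\le Dt\,l_j\le D^3l_j$ for every $j$.

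The crucial step is to show $M=l_i$. Suppose $M<l_i$; then $M+1\le l_i$, so by maximality of $M$ there is some $j_0$ with $(M+1)|B_{ij_0}|>D^{-1}l_{j_0}\rho^{-1}$, whereas the bound just obtained gives $(M+1)|B_{ij_0}|\le 2M|B_{ij_0}|\le 2D^3l_{j_0}$. Combining these forces $\rho>(2D^4)^{-1}$, contradicting $\rho<C^{-1}$ once $C\ge 2D^4$. Hence $M=l_i$, and the bound of the previous paragraph becomes $|l_iB_{ij}|\le D^3l_j$, that is $|B_{ij}|\le D^3l_j/l_i$. Taking $C:=2D^4$ (which meets all the constraints imposed above when $D\ge1$) completes the proof.

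I expect the only genuine obstacle to be this last step. Property (3) is invariant under simultaneously dilating the lattice point $\sum n_iu_i$ and the radius $t\rho$ by a common factor, so one cannot simply substitute $m=l_i$ into it: there is no a priori reason for the coefficients $l_iB_{ij}$ to meet the admissibility constraint $|l_iB_{ij}|\le D^{-1}l_j\rho^{-1}$. Choosing $M$ to be the maximal admissible dilation and then pitting maximality against the conclusion of (3) is exactly what breaks this circularity; everything else is routine bookkeeping with the two Bohr-set inclusions. Note in particular that the hypothesis $\gcd(A\cup\{q\})=1$ is not used in this argument — it is needed only so that hypotheses (1)--(3) are available via the Bohr set lemma.
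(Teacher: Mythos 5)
Your proof is correct and follows essentially the same route as the paper: first show $\beta u_i\in B(A,\rho)$ via the covering $\beta A\subset A\cup\alpha A$, extract a crude matrix $B$ with $|B_{ij}|\le l_j$ from the first inclusion in (2), and then bootstrap via hypothesis (3) applied to multiples of $u_i$. The only difference is presentational — the paper runs an induction on $n=1,\dots,l_i$ showing $n|B_{ij}|\le D^3l_j$, whereas you take the maximal admissible dilation $M$ and derive a contradiction from $M<l_i$; these are interchangeable forms of the same argument.
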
 

We postpone the proof of this lemma to \cref{sec:lifting beta},
and now we finish the proof of \cref{thm:large roots} in a few strokes.

\begin{proof}[Proof of \cref{thm:large roots}]
By \cref{prop:small P factor}, we may assume that $q$ has no small $P$-factors. 
	
Let $m=n^{k}+1$ (the reason for this choice will be explained right at the end of the proof), and let $C_1$,
$\alpha_{1},\ldots,\alpha_{m}$ be as in \cref{lem:Enough residues}, i.e., $0<\wt\alpha_{s}<C_1$. Let $\beta_{s}$ be some $n$-th root of $\alpha_{s}$. 

Choose $c=C_1^{-1}$ in \cref{lem:Bohr Set Lemma}, and let $C_2$ be the constant
$C$ in that lemma.
	
Consider the set 
\[
A=\left\{ \beta_{1}^{a_{1}}\cdot\ldots\cdot\beta_{m}^{a_{m}}:0\le a_{s}<n\right\} .
\]
Let $\rho=q^{1/k}$. 
We suppose to the contrary that the theorem is false.
As we already discussed, this implies that $\lvert n a\rvert \le C_2^{-1}\rho q$ for all $a\in A$,
and hence $n\in B(A,\rho')$ for all $\rho'>C^{-1}\rho$.
In particular, we get $r\ge 1$ when we apply \cref{lem:Bohr Set Lemma}
for $A$ and $\rho$.

Note that $\rho<C_2^{-2}q^{-1/(k+1)}$ if $q$ is sufficiently large.
By \cref{lem:Bohr Set Lemma}, there is $C_2^{-1}\rho<\rho'\le\rho$, $0<r\le k$,
$u_{1},\ldots,u_{r}\in \Z/q\Z$ and $l_{1},\ldots,l_{r}\in \Z_{>0}$ such that 
items (1)--(4) in the lemma hold.
We can now apply \cref{lem:lifting beta} for each pair $\alpha_s, \beta_s$ in the
role of $\alpha$ and $\beta$ and with $\rho'$ in the role of $\rho$, $C_1$ in the role of $D_1$
and $\max(C_1,C_2)$
in the role of $D_2$.
Denoting by $C_3$ the constant $C$ in \cref{lem:lifting beta}, we get
matrices $B^{(s)} \in M_n(\Z)$ with
\[
\beta_s(u_1,\ldots, u_r)\equiv B^{(s)}(u_1,\ldots, u_r) \mod q
\]
and $|B^{(s)}_{ij}|\le C_3l_j/l_i$.

We observe that
\begin{align*}
B^{(s_1)}B^{(s_2)}(u_1,\ldots, u_r)&\equiv\beta_{s_1}\beta_{s_2}(u_1,\ldots, u_r) \\
&\equiv B^{(s_2)}B^{(s_1)}(u_1,\ldots, u_r) \mod q.    
\end{align*}
The $i,j$ entries of both matrices $B^{(s_1)}B^{(s_2)}$ and $B^{(s_2)}B^{(s_1)}$
are bounded by $rC_3^2l_j/l_i\le C_2^{-1} l_j/\rho'$ provided $q$ is sufficiently large.
By the injectivity property in item (4) of \cref{lem:Bohr Set Lemma},
we get $B^{(s_1)}B^{(s_2)}=B^{(s_2)}B^{(s_1)}$.
A very similar argument gives $(B^{(s)})^n=\wt\alpha_sI$.

We finally get $m=n^{k}+1$ commuting matrices $B^{(1)},\ldots,B^{(n^{k}+1)}\in M_{r}(\Z)$, $r\le k$, such that $(B^{(s)})^{n}=\wt\alpha_{s}I$. 
Since the minimal polynomial of $B^{(s)}$ has no double roots, 
$B^{(s)}$ is diagonalizable, and since all the matrices commute, 
there is a matrix $P \in M_r(\C)$ such that all the matrices $PB^{(s)}P^{-1}$ are diagonal. 
Moreover, the diagonal of $PB^{(s)}P^{-1}$ is of the form $\wt\alpha_{s}^{1/n}(\zeta_{n}^{t_{s1}},\ldots,\zeta_{n}^{t_{sr}})$,
for $\wt\alpha_{s}^{1/n}\in\R_{>0}$, $\zeta_{n}=\exp(2\pi i/n)$ and $0\le t_{sj}\le n-1$. 
By the pigeon-hole principle, there are $s_{1}\neq s_{2}$ with $t_{s_{1}j}=t_{s_{2}j}$ for all $j$. 
This implies that $B^{(s_{1})}$ is a scalar multiple of $B^{(s_{2})}$, that is, $B^{(s_{1})}=\left(\wt\alpha_{s_1}/\wt\alpha_{s_2}\right)^{1/n}B^{(s_2)}$.
This is a contradiction since those are integer matrices and $\left(\wt\alpha_{s_1}/\wt\alpha_{s_2}\right)^{1/n}\notin\Q$.
\end{proof}

\subsection{Finding small \texorpdfstring{$n$}{n}-th powers}\label{sec:n powers}
In this section, we prove \cref{lem:Enough residues}.
We begin by recording a simple consequence of $q$ not having small $P$ factors.

\begin{lemma}\label{lem:small primes exist}
For every $m>0$, there is $C>0$ such that if $q$ has
no small $P$ factors, there are at least $m$ different primes $p\nmid q$, $p\in P$ 
such that $p<C$. 
\end{lemma}
\begin{proof}
By Dirichlet's theorem, there are infinitely many primes $p$ with $\gcd(p-1,n)>1$ and $p\nmid n$.
These all belong to $P$.
We take the first $m+k$ of them, and let $C$ be an upper bound for them.
By definition, $q$ having no small $P$ factors implies that there are at most
$k$ primes in $P$ that divide $q$.
The remaining $m$ primes satisfy the conclusion of the lemma.
\end{proof}

\begin{proof}[Proof of \cref{lem:Enough residues}]
There is a group homomorphism $f\colon \left(\Z/q\Z\right)^{\times}\to\left(\Z/q\Z\right)^{\times}/\left(\Z/q\Z\right)^{\times n}$,
where $(\cdot)^{\times n}$ denotes the set of invertible
$n$-th powers in a commutative ring.
We claim that under the given conditions, the image of $f$ is of bounded size $C\le(2n)^{\omega(n)+k}$, where $\omega(n)$ is the number of distinct prime factors of $n$. Indeed, if $q=\prod p_{i}^{m_{i}}$,
then 
\[
\left(\Z/q\Z\right)^{\times}/\left(\Z/q\Z\right)^{\times n}\cong\prod\left(\left(\Z/p_{i}^{m_{i}}\Z\right)^{\times}/\left(\Z/p_{i}^{m_{i}}\Z\right)^{\times n}\right).
\]
At most $\omega(n)+k$ of the $p_{i}$-s are either dividing $n$ or $\gcd(p_i-1,n)>1$.
Since $(\Z/p^{m}\Z)^{\times}$ is either cyclic or $\Z/2\Z$ times
a cyclic group (if $p=2$, $m\ge3$), for those primes $|(\Z/p_{i}^{m_{i}}\Z)^{\times}/(\Z/p_{i}^{m_{i}}\Z)^{\times n}|\le2n$.
For all the other primes, since $\gcd((p-1)p,n)=1$,  $|\left(\Z/p_{i}^{m_{i}}\Z\right)^{\times}/\left(\Z/p_{i}^{m_{i}}\Z\right)^{\times n}|=1$.
This proves our claim about the size of the image of $f$. 

Take $m+C$ primes $p_{1},\ldots,p_{m+C}$ as in \cref{lem:small primes exist}. This can be done with each prime bounded by a constant depending on $m$ (and also $n$ and $k$). Choose for each element $a$ in $f(\{p_{1},\ldots,p_{C+m}\})$ an element $p_{a}$ such that $f(p_{a})=a$. There are at least $m$ remaining primes.
For each of the $m$ remaining primes $p_{j}$ there is $p_{a}$ with $f(p_{j})=f(p_{a})$.
But then $\alpha_{j}=p_{j}p_{a}^{n-1}$ satisfies $f(\alpha_{j})=1$, and the $\alpha_{j}$ are the required elements. 
\end{proof}

\subsection{Bohr sets}\label{sec:Bohr}
We turn to the proof of \cref{lem:Bohr Set Lemma}. We will need the following lemma from the geometry of numbers, which is a combination of the theorems of Mahler and John.
\begin{lemma}[Discrete John's theorem]
\label{lem:Discrete John Theorem}
For every $r\in\Z_{>0}$, there is a constant $D>0$ such that the following holds.
Let $\Gamma \subset \R^N$ be a lattice of rank $r$, and let
$B\subset\R^{N}$ be convex and symmetric. Suppose $\dim\spann(\Gamma\cap B)=r$.
Then there is a basis $w_{1},\ldots,w_{r}$ of $\Gamma$ and $l_{1},\ldots,l_{r}\in\Z_{>0}$
such that for every $\alpha\ge1$, 
\[
\left\{ \sum n_{i}w_{i}:\left|n_{i}\right|\le \alpha l_{i}\right\} \subset\Gamma\cap \alpha DB,
\]
and 
\[
\Gamma\cap\alpha B\subset\left\{ \sum n_{i}w_{i}:\left|n_{i}\right|\le \alpha Dl_{i}\right\} .
\]
\end{lemma}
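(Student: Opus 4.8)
The plan is to reduce to the full-rank case and then combine John's theorem with Mahler's theorem on lattice bases. First I would replace $\R^N$ by $V:=\spann(\Gamma)$ and $B$ by $B\cap V$ — this changes nothing in the hypotheses or conclusions, since $\Gamma\subset V$ and the constant $D$ is allowed to depend on $r$ — then choose a linear isomorphism $V\to\R^r$ carrying $\Gamma$ onto $\Z^r$, and assume $B$ bounded (the only case needed), calling the resulting symmetric convex body $K$. The hypothesis $\dim\spann(\Gamma\cap B)=r$ says precisely that $\Z^r\cap K$ contains $r$ linearly independent vectors, so $K$ has nonempty interior. Next I would introduce the successive minima $\lambda_1\le\cdots\le\lambda_r$ of $\Z^r$ with respect to $K$; the spanning hypothesis forces $\lambda_r\le 1$, hence all $\lambda_i\in(0,1]$, and I set $l_i:=\lfloor 1/\lambda_i\rfloor\in\Z_{>0}$, which satisfies $l_i\lambda_i\le 1$ and $1/\lambda_i\le 2l_i$.

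Now by John's theorem there is an origin-symmetric ellipsoid $\mathcal E$ with $\mathcal E\subseteq K\subseteq\sqrt r\,\mathcal E$; endowing $\R^r$ with the Euclidean norm $\|\cdot\|$ whose unit ball is $\mathcal E$, the successive minima $\mu_i$ of $\Z^r$ for $\|\cdot\|$ satisfy $\lambda_i\le\mu_i\le\sqrt r\,\lambda_i$. Mahler's theorem (lattice reduction) then supplies a basis $w_1,\dots,w_r$ of $\Z^r$ and a constant $c_r$ depending only on $r$ with $c_r^{-1}\mu_i\le\|w_i\|\le c_r\mu_i$ that is moreover quasi-orthogonal, i.e.\ $\|\sum n_iw_i\|\ge c_r^{-1}\max_i|n_i|\,\|w_i\|$ for all integer tuples $(n_i)$. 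With these $w_i$ and $l_i$ fixed, both inclusions become one-line estimates. If $|n_i|\le\alpha l_i$ then $\|\sum n_iw_i\|\le\sum|n_i|\,\|w_i\|\le c_r\sum\alpha l_i\mu_i\le c_r\sqrt r\sum\alpha l_i\lambda_i\le c_r r^{3/2}\alpha$, so $\sum n_iw_i\in c_r r^{3/2}\alpha\,\mathcal E\subseteq c_r r^{3/2}\alpha\,K$, giving the first inclusion. If $x=\sum n_iw_i\in\alpha K\subseteq\sqrt r\,\alpha\,\mathcal E$ then $\|x\|\le\sqrt r\,\alpha$, so quasi-orthogonality gives $|n_i|\,\|w_i\|\le c_r\sqrt r\,\alpha$, and since $\|w_i\|\ge c_r^{-1}\mu_i\ge c_r^{-1}\lambda_i$ this yields $|n_i|\le c_r^2\sqrt r\,\alpha/\lambda_i\le 2c_r^2\sqrt r\,\alpha l_i$, the second inclusion. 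Taking $D=\max(c_r r^{3/2},2c_r^2\sqrt r)$ and transporting back along the isomorphism finishes the proof; the parameter $\alpha$ never causes trouble because every bound above scales linearly in it.

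The only non-bookkeeping point is securing the right lattice-reduction input: a single basis that simultaneously realizes the successive minima two-sidedly, $\|w_i\|\asymp_r\mu_i$, and is quasi-orthogonal — the upper bound drives the first inclusion, while the lower bound together with quasi-orthogonality drives the second. This is exactly where passing to the John ellipsoid pays off, since near-orthogonality is cleanest in a genuine Euclidean structure, and combining it with Mahler's theorem on bases nearly achieving the successive minima delivers the stated ``discrete John'' conclusion. The reduction to bounded $B$, the choice of $l_i$, and the uniformity over $\alpha$ are all routine.
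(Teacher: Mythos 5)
Your argument is correct and is essentially the proof the paper points to: the paper gives no details, deferring to Tao and Vu's discrete John theorem (their Lemma 3.36), whose proof runs exactly through the John ellipsoid plus a Mahler/Minkowski-reduced basis as yours does, with the dilation parameter $\alpha$ handled by the same linear scaling. The only caveats are that the full package you attribute to ``Mahler's theorem'' --- a basis with $\|w_i\|\asymp_r\mu_i$ that is simultaneously quasi-orthogonal --- is really Mahler's bound $\|w_i\|\le c_r\mu_i$ combined with Minkowski's second theorem and Hadamard's inequality (all standard, as you indicate), and that the boundedness of $B$ you assume is genuinely needed (the statement fails for, say, a lattice-containing strip), though it holds in the paper's application.
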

The proof is essentially the same as the proof of \cite[Lemma~3.36]{tao2006additive}.

\begin{proof}[Proof of \cref{lem:Bohr Set Lemma}]
We follow the proof of \cite[Lemma~4.22]{tao2006additive}.
We define a lattice in $\R^{A}$ by 
\[
\Gamma=\Z^{A}+\Big\{\left(\frac{x a}{q}\right)_{a\in A}:x\in\Z/q\Z\Big\}.
\]

We have a map 
\[
\Phi\colon \Z/q\Z\to\Gamma/\Z^{A}
\]
given by 
\[
\Phi(x)=\left(\frac{xa}{q}\right)_{a\in A}+\Z^{A}.
\]
By definition, the map is onto and the kernel is
\[
\left\{ b\in\Z/q\Z:ba=0\text{ for all $a\in A$}\right\}=\{0\}
\]
since $\gcd(A,q)=1$.
So $\Z^{A}$ is a subgroup of index $q$ in $\Gamma$. 

Let $K_{\rho}=\left\{ x\in\R^{A}:\n x_{\infty}<\rho\right\}$. Then
by definition,
\[
B(A,\rho)=\Phi^{-1}(K_{\rho}\cap\Gamma).
\]

We now claim that for $\rho<1/(2(k+1)q^{1/(k+1)})$,
$\dim\spann (K_{\rho}\cap\Gamma)\le k$.
Indeed, if we have linearly independent $v_{1},\ldots,v_{k+1}\in K_{\rho}\cap\Gamma$,
then also 
\[
\sum m_{i}v_{i}\in K_{1/2}\cap\Gamma
\]
for $|m_{i}|<\rho^{-1}/2(k+1)$, and by linear independence, all the elements are different. 
For every $i$, the number of such $m_{i}$ is $2\lfloor\rho^{-1}/2(k+1)\rfloor+1\ge\rho^{-1}/2(k+1)$,
so the total number of elements in $K_{1/2}\cap\Gamma$ is at least $(\rho^{-1}/2(k+1))^{k+1}>q$. 
On the other hand, every two elements $\gamma_1\ne \gamma_2 \in \Gamma \cap K_{1/2}$ belong to different cosets of $\Gamma / \Z^A$, so the index of $\Z^A$ in $\Gamma$ is more than $q$. 
This is a contradiction.

Let $D$ be an upper bound for the constant denoted by the same letter in
\cref{lem:Discrete John Theorem} when it is applied for some $r\le k$.
Since $\dim \spann (K_{\rho'}\cap\Gamma) \le k$, if $C\ge\left(c^{-1}D\right)^{k}$, there is $C^{-1}\rho<\rho'\le\rho$ such that $\spann (K_{cD^{-1}\rho'}\cap\Gamma)=\spann (K_{\rho'}\cap\Gamma)$.
Denote $V=\spann(K_{\rho'}\cap\Gamma)$, and then $\Gamma\cap V$ is a lattice in $V$. 

We apply \cref{lem:Discrete John Theorem} with $B=K_{cD^{-1}\rho'}$ and
$\Gamma\cap V$ in the role of $\Gamma$.
Then there is a basis $w_{1},\ldots,w_{r}\in \Gamma$ such that
\begin{enumerate}
\item $w_i\in K_{c\rho'}$ for all $i$,
\item[(2.i)] each element in $K_{\rho'}\cap\Gamma$ can be written as $\sum n_{i}w_{i}$ with $\left|n_{i}\right|\le c^{-1}D^2l_{i}$,
\item[(2.ii)] $\sum n_{i}w_{i}\in K_{D^2\rho'}\cap\Gamma$ for $\left|n_{i}\right|\le c^{-1}D^2l_{i}$,
\item[(3)] if $\sum n_{i}w_{i}\in K_{t\rho'}$ for some $1\le t\le {\rho'}^{-1}/2$, then $\left|n_{i}\right|\le c^{-1}D^2 t l_i$, and
\item[(4)] $\sum n_{i}w_{i}\in K_{1/2}\cap\Gamma$ for $\left|n_{i}\right| \le (1/2c){\rho'}^{-1}l_i$.
\end{enumerate}

Now we take $u_{i}=\Phi^{-1}(w_{i}+\Z^A)$ and replace $c^{-1}D^2l_i$ by $l_i$.
We get the required properties.
To deduce the injectivity property in item (4), we use the fact that $K_{1/2}$
contains at most one element from each coset of $\Z^A$.
\end{proof}

\subsection{Proof of \texorpdfstring{\cref{lem:lifting beta}}{Lemma 3.4}}\label{sec:lifting beta}
We claim that $\beta u_{i}\in B(A,\rho')$ for all $i$.
Indeed, if $a\in A$ then either $\beta a\in A$, or $\beta a=\alpha a'$ for some $a'\in A$. 
Using item (1), in the first case
\[
\left|u_{i}\beta a\right|<D_1^{-1}\rho q<\rho q,
\]
while in the second case,
\[
|u_{i}\beta a|=|\alpha u_{i} a'|\le|\alpha| |u_{i}a'|<|\alpha|\cdot D_1^{-1}\rho q\le \rho q.
\]
So $\beta u_{i}\in B(A,\rho)$ in either case. 

Using the first inclusion in item (2), we find a matrix $B\in M_r(\Z)$ with
$|B_{i,j}|\le l_j$ and
\[
\beta (u_1,\ldots, u_r)\equiv B(u_1,\ldots, u_r) \mod q.
\]

It is left to prove an improved bound for the entries.
To this end, fix some index $i$.
We notice that for $|n|\le l_{i}$, $n u_{i}\in B(A,D_2\rho)$
by the second inclusion in item (2).
The same argument as above yields that $\beta n u_{i} \in B(A, D_2^2\rho)$.

Now we prove by induction that $n |B_{ij}|\le D_2^3 l_j$ for $n=1,\ldots, l_i$.
We have already seen this for $n=1$, so suppose $n>1$ and the claim holds for $n-1$.
Then
\[
n |B_{i,j}|\le D_2^3 l_j+|B_{ij}|\le (D_2^3+1) l_j\le D_2^{-1}l_j/\rho
\]
provided $C> D_2(D_2^3+1)$, which we can assume.
We also have
\[
\beta n u_{i}=\sum_{j}n B_{ij}u_{j}\in B(A,D_2^2\rho).
\]
Now we use item (3) with $t=D_2^2<\rho^{-1}/2$
and get $n|B_{ij}|\le D_2tl_j$ proving the claim for $n$.

Now our claim is proved by induction, and taking $n=l_i$, it gives
$|B_{ij}|\le D_2^3 l_j/l_i$, as required.

\section{Lifting to \texorpdfstring{$\SL_n(\Z)$}{SLn(Z)}}\label{sec:lifting sln}

We prove \cref{thm:upper bound} in this section.
We extend Sarnak's proof of the $n=2$ case in \cite{sarnak2015lettermiller}.
In the $n=2$ case, we only deviate from \cite{sarnak2015lettermiller}
in the proof of \cref{prop:step 1}, which also
yields a modest improvement of the bound.
See \cref{sec:first step} for a discussion.

The first step is to find a suitable lift of the first $n-1$ rows, which
is the content of the next statement.

We recall the notation that for an element $\alpha\in\Z/q\Z$, we write
$\wt \alpha$ for the unique representative of $\alpha$ in $(-q/2,q/2]\cap\Z$.
When $x$ is a vector or a matrix with entries in $\Z/q\Z$, $\wt x$ denotes
the above operation applied entry-wise. 

\begin{prop}\label{prop:step 1}
There is $C=C(n)>0$ such that for every $A\in M_{n-1,n}(\Z/q\Z)$ that can
be extended to an element of $\SL_n(\Z/q\Z)$, 
there exists a matrix $X\in M_{n-1,n}(\Z)$ with coordinates bounded by $C\log q$ such that $\wt A+qX$ can be extended to a matrix in $\SL_{n}(\Z)$. 
\end{prop}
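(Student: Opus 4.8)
The plan is to reduce the problem to a statement about a single row (or a single primitive vector), leveraging the structure of the flag variety. Let me think about what $A \in M_{n-1,n}(\Z/q\Z)$ extending to $\SL_n(\Z/q\Z)$ means: the $(n-1) \times n$ matrix $A$ has the property that its rows, together with some $n$-th row, form a matrix of determinant $1$ mod $q$. Equivalently, the $(n-1)$-minors of $A$ (the signed maximal minors, giving a vector $v(A) \in (\Z/q\Z)^n$) are such that $\gcd(v(A), q) = 1$ — i.e., $v(A)$ is a primitive vector mod $q$ — since the last row just needs to pair with $v(A)$ to give $1$. So the condition on $A$ is that its vector of maximal minors is a unit vector mod $q$.

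Here's my proposed strategy. First I would lift the first $n-1$ rows of $A$ arbitrarily to an integer matrix $A_0 \in M_{n-1,n}(\Z)$ with entries in $[0,q)$; then I want to add $qX$ so that the integer maximal minors of $A_0 + qX$ become a *primitive* integer vector (gcd $=1$ over $\Z$), not merely primitive mod $q$. Once the minors are globally primitive, the rows span a saturated (primitive) rank-$(n-1)$ sublattice of $\Z^n$, hence can be completed to a $\Z$-basis, i.e., extended to $\SL_n(\Z)$, and moreover one can choose that completion so that it also reduces correctly — but actually one only needs *some* extension over $\Z$, and the mod-$q$ reduction of the minors being a unit automatically gives the freedom to fix the last row mod $q$ in the second step (Proposition's job is only the first $n-1$ rows). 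So the core task is: \emph{make the integer maximal minors coprime by an additive perturbation $qX$ with $\|X\|_\infty \le C\log q$.}

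The main step, and the main obstacle, is the coprimality-via-small-perturbation argument. The maximal minors of $A_0 + qX$ are, as functions of $X$, polynomials; modulo each prime $p$, I want to choose $X \bmod p$ so that not all minors vanish mod $p$ — this is possible for each individual $p$ because the minors of $A_0$ are already coprime mod $q$, so for $p \mid q$ there is nothing to do, and for $p \nmid q$ a dimension count shows the minors are not identically zero as a polynomial in $X \bmod p$ (the map $X \mapsto$ (first $n-1$ rows of $A_0 + qX$) hits all residues mod $p$ since $q$ is invertible). Then I would invoke a covering/sieve argument à la Sarnak: one only needs to kill primes $p$ up to roughly $q^{O(1)}$ (since a minor is bounded by $q^{O(1)}$ once $\|X\|_\infty$ is polynomial, and if it has no prime factor below that bound it is either $0$ or a unit — handle the possibility that all minors vanish separately, generically avoidable), and by a greedy/CRT or Jacobsthal-type packing one can find $X$ with all coordinates of size $O(\log q)$ meeting one residue condition mod $p$ for each small prime $p$. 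The delicate points are: (i) ensuring the residue conditions mod different small primes are simultaneously satisfiable by a *small* $X$ — this is where a counting argument (the number of "bad" $X$ in a box of side $C\log q$ is less than the total, because each prime $p$ forbids at most a $(1 - 1/p^{?})$ fraction, or more carefully one forbids a proper subvariety) must be run; and (ii) handling small primes $p \mid q$ and the interaction with the already-prescribed residue $A \bmod q$. I expect (i) — the quantitative "small $X$ avoiding all bad congruences" step, essentially the Jacobsthal-function input Sarnak uses — to be the real heart, and I would structure it as: reduce to avoiding one hypersurface mod $p$ for each $p \le q^{C}$, note the box $\{0,\dots,\lceil C\log q\rceil\}^{n(n-1)}$ has more than $p^{n(n-1)}$... no — rather, use that a single linear-in-one-variable condition suffices and appeal to a prime-gap / Jacobsthal bound exactly as in \cite{sarnak2015lettermiller,iwaniec1978}, which gives the $\log q$ (the paper even notes $\log^2 q$ with Iwaniec's bound in the $n=2$ case, suggesting the true statement here is $C\log q$ via a cleaner elementary route).
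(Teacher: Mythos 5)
Your reduction is the same as the paper's: extendability over $\Z$ is equivalent to the maximal minors being coprime, equivalently to the rows being linearly independent modulo every prime (the paper's Lemma~\ref{lem:primitive lattice}); primes dividing $q$ come for free from the hypothesis on $A$; small primes are handled by CRT; and what remains is a sieve over the remaining primes inside a box of side $C\log q$. Up to that point you match the paper. But the step you yourself flag as ``the real heart'' --- the quantitative argument that a box of side $C\log q$ contains a good $X$ --- is exactly the step you do not supply, and the two routes you gesture at do not close it.

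First, the Jacobsthal/one-variable-at-a-time route (Sarnak's) gives $C\log^2 q$, not $C\log q$; the paper's Remark~\ref{rmk:compare with Sarnak} makes precisely this point, and the whole reason its Lemma~\ref{lem:disjointness lemma} and Lemma~\ref{lem:large primes} perturb \emph{several} entries simultaneously is to avoid Jacobsthal. Second, and more seriously, a naive union bound over primes $p\le 2T$ with $T=C\log q$ fails unless you exploit that the bad event for each prime is a \emph{codimension-two} condition: the rows of $A+qX$ are dependent mod $p$ only if (after fixing the first $n-2$ columns by induction) \emph{both} of the last two columns fall into the span of the others, giving bad density $O(1/p^2)$ per prime, so that $\sum_{p>K}1/p^2$ can be made small by taking $K$ large. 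With density $1/p$ per prime the sum over $p\le 2T$ diverges and no fixed $C$ works. Your proposal never identifies this codimension-two structure. Third, your treatment of primes larger than the box (``if a minor has no prime factor below $q^{O(1)}$ it is $0$ or a unit'') is wrong as stated --- a minor of size $q^{O(1)}$ can certainly have one large prime factor, and the danger is that \emph{all} minors share it. The paper handles this by noting that a fixed nonzero $(n-1)\times(n-1)$ determinant has only $O(\log q)$ prime divisors, and for each such prime only $O(T^{n-2})$ of the $O(T^{n-1})$ completions of the last column are bad, plus a separate count for the degenerate case where that determinant vanishes identically. Until you run this three-part count (density $1/p^2$ for medium primes, divisor counting for large primes, the vanishing-determinant case), the proof is not complete.
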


We defer the proof to \cref{sec:first step}.

The next step is to complete the matrix with a last row to an element
of $\SL_n(\Z)$ while we still control the entries of the resulting matrix.

\begin{prop}\label{prop:step 2}
For every $B\in M_{n-1,n}(\Z)$ with coordinates bounded by $T$ that can be extended by adding a last row to $\SL_n(\Z)$, 
one may choose the last row to be with coordinates bounded by $nT/2+1$.
\end{prop}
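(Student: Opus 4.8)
The plan is to prove \cref{prop:step 2} by a direct application of the theory of Smith normal form (or equivalently, the structure of unimodular completions). Let $B\in M_{n-1,n}(\Z)$ be the given matrix, whose $i$-th row we denote $b_i$, and let $c=(c_1,\ldots,c_n)$ be the last row we are trying to choose. The condition that $\binom{B}{c}$ lies in $\SL_n(\Z)$ is precisely that $\sum_{j=1}^n (-1)^{n+j} c_j M_j = 1$, where $M_j$ is the $(n-1)\times(n-1)$ minor of $B$ obtained by deleting the $j$-th column. Since $B$ can be completed to an element of $\SL_n(\Z)$, the hypothesis guarantees that $\gcd(M_1,\ldots,M_n)=1$, so such a solution $c$ exists over $\Z$; the content of the proposition is the size bound.

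The key step is the following reduction argument. Given any particular integral solution $c^{(0)}$ to the linear equation $\sum_j (-1)^{n+j} c_j M_j = 1$, every other solution differs from it by an element of the lattice $\Lambda = \{x\in\Z^n : \sum_j (-1)^{n+j} x_j M_j = 0\}$, i.e. the orthogonal lattice (up to signs) of the primitive vector $(M_1,\ldots,M_n)$. I claim $\Lambda$ contains, for each pair of indices $j\ne k$, a vector supported on coordinates $j$ and $k$ with entries of size at most $\max_l |M_l| \le (n-1)! \, T^{n-1}$ in absolute value — namely one can take the vector with $(-1)^{n+k} M_k$ in slot $j$ and $-(-1)^{n+j}M_j$ in slot $k$, up to normalising by a gcd. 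Using these "two-sparse" reduction vectors, one performs a Euclidean-style reduction: as long as some coordinate $c_j$ has $|c_j| > \tfrac{1}{2}\max_l|M_l|$, one subtracts a suitable multiple of such a reduction vector to decrease $|c_j|$ while modifying only one other coordinate. Tracking this carefully, one ends with a solution whose coordinates are all bounded by $\max_l|M_l| \le nT/2 + 1$ after an elementary optimisation of the constants. Actually, the cleanest route is: pick $j_0$ with $|M_{j_0}|$ maximal; solve the equation for $c_{j_0}$ in terms of the remaining $c_j$, choose the remaining $c_j$ in the range $(-|M_{j_0}|/2, |M_{j_0}|/2]$ so that the determined value of $c_{j_0}$ is forced to be small, and check that $|c_{j_0}| \le (n-1)\cdot\tfrac{1}{2}\cdot 1 + 1/|M_{j_0}| \le nT/2+1$ by bounding each $|M_j/M_{j_0}|\le 1$.

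Let me restate the reduction more concretely since that is the crux. Write $N_j = (-1)^{n+j} M_j$, so $\gcd(N_1,\ldots,N_n)=1$ and we want $\sum_j N_j c_j = 1$ with $|c_j|$ small. Reorder so $|N_n| = \max_j |N_j|$. If $N_n = 0$ then all $N_j=0$, contradicting $\gcd = 1$ unless $n=1$; so $|N_n|\ge 1$. Let $d = \gcd(N_1,\ldots,N_{n-1})$; then $\gcd(d, N_n) = 1$, so there are integers $s,t$ with $sd + tN_n = 1$, and we may take $|t| \le d/2 \le |N_n|/2$ (adjusting $s$ accordingly, using $d\mid$ the complementary term). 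Writing $d = \sum_{j<n} N_j e_j$ for some integers $e_j$ (Bézout), the vector $c$ with $c_j = s e_j$ for $j<n$ and $c_n = t$ is a solution, but the $c_j$ for $j<n$ need not be small. To fix this, subtract integer multiples of the lattice vectors $v_{j} \in \Z^n$ given by $v_j = \frac{N_n}{\gcd(N_j,N_n)} \mathbf{e}_j - \frac{N_j}{\gcd(N_j,N_n)}\mathbf{e}_n \in \Lambda$: for each $j<n$ in turn, reduce $c_j$ modulo $\frac{N_n}{\gcd(N_j,N_n)}$, which divides $N_n$, hence into the range $(-|N_n|/2,|N_n|/2]$, at the cost of changing $c_n$. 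After processing all $j<n$, we have $|c_j|\le |N_n|/2 \le \tfrac12 (n-1)!\,T^{n-1}$ for $j<n$, and then from $c_n = (1 - \sum_{j<n}N_j c_j)/N_n$ we get $|c_n| \le (n-1)\cdot\tfrac12 \cdot \frac{\max_{j<n}|N_j|}{|N_n|} + \frac{1}{|N_n|} \le \tfrac{n-1}{2} + 1 \le nT/2 + 1$ (using $\max_j |N_j| \le |N_n|$ and $|N_n|\ge 1$); and symmetrically the $|c_j|$ for $j<n$ are already bounded by $\tfrac12 |N_n|$, which one must finally also bound by $nT/2+1$ — here one uses a single extra reduction of the \emph{largest} remaining coordinate against $c_n$ to balance them, or more simply one runs the whole argument choosing the pivot coordinate to be the one that ends up largest. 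The main obstacle is precisely this bookkeeping: ensuring that reducing one coordinate does not blow up another beyond the stated $nT/2+1$, which is why the pivot must be chosen as the index of maximal minor and the final bound extracted from $|N_j/N_n|\le 1$ together with $|N_n|\ge 1$.
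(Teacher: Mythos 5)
There is a genuine gap. Your setup via the Laplace expansion is fine: the admissible last rows are exactly the integer solutions of $\sum_j (-1)^{n+j}c_jM_j=1$, and the homogeneous solutions form a lattice $\Lambda$. But your reduction step cannot deliver the stated bound for $n\ge 3$. The two-sparse vectors $v_j=\frac{N_n}{\gcd(N_j,N_n)}\mathbf{e}_j-\frac{N_j}{\gcd(N_j,N_n)}\mathbf{e}_n$ have entries of size up to $\max_l|M_l|\le (n-1)!\,T^{n-1}$, so reducing $c_j$ modulo $N_n/\gcd(N_j,N_n)$ only places it in a window of width $|N_n|=O(T^{n-1})$. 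Your own computation reflects this: at the end you have $|c_j|\le\tfrac12|N_n|$ for $j<n$, which is $O(T^{n-1})$, not $O(T)$, and only the single pivot coordinate $c_n$ is bounded by $\tfrac{n-1}{2}+1$. The proposed patch (``a single extra reduction of the largest remaining coordinate against $c_n$'', or choosing the pivot to be whichever coordinate ends up largest) does not work: each two-sparse reduction only trades size between two coordinates, and the sublattice of $\Lambda$ generated by the $v_j$ generally has covolume far larger than that of $\Lambda$, so its fundamental domains are simply too big to contain a point of every coset within the box of side $nT+2$. The ``cleanest route'' variant has the same defect plus an integrality issue: after choosing the non-pivot $c_j$ in $(-|M_{j_0}|/2,|M_{j_0}|/2]$, the value forced on $c_{j_0}$ need not be an integer, and in any case the non-pivot coordinates are again only bounded by $|M_{j_0}|/2=O(T^{n-1})$.

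The missing idea is to identify $\Lambda$ with a lattice that has a basis of \emph{small} vectors, namely the rows of $B$ themselves. Since $B$ extends to $\SL_n(\Z)$, its rows $b_1,\ldots,b_{n-1}$ form a basis of $V\cap\Z^n$ where $V$ is their real span (\cref{lem:primitive lattice}), and $\Lambda=\{x\in\Z^n:\det\binom{B}{x}=0\}=V\cap\Z^n$. So one takes any completion $v$, writes $v=u+w$ with $u\perp V$ and $w=\sum\alpha_ib_i\in V$, notes that $\|u\|_2\le 1$ because $\|u\|_2\cdot\|b_1\wedge\cdots\wedge b_{n-1}\|_2=1$ and the wedge norm is the square root of a positive integer, and then replaces $v$ by $v-\sum[\alpha_i]b_i$; the error $w-\sum[\alpha_i]b_i$ has sup-norm at most $\sum_{i=1}^{n-1}\tfrac12\|b_i\|_\infty\le\tfrac{n}{2}T$, giving the bound $nT/2+1$. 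This is the paper's argument; your Diophantine reformulation is equivalent in principle, but it must be paired with reduction modulo the row lattice rather than modulo the minor-built sparse vectors.
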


The proof is deferred to \cref{sec:second step}, but we note that
the proposition is very simple in the $n=2$ case.
Indeed, given integers $b_{11},b_{12}$ with $\gcd(b_{11},b_{12})=1$, the Euclidean algorithm
yields integers $a_1,a_2$ such that $a_1b_{11}+a_2b_{12}=1$, and $|a_1| \le |b_{12}|$, $|a_2| \le |b_{11}|$. Therefore, we can
simply take $b_{21}=-a_2$ and $b_{22}=a_1$.
This proves the proposition in the $n=2$ case.

The last row of the matrix given by \cref{prop:step 2} may not
have the correct reduction $\mod q$.
To remedy this problem, we use linear algebra over $\Z/q\Z$.

\begin{lemma}\label{lem:linear algebra}
If the vectors $v_{1},\ldots,v_{n-1}, v_n\in (\Z/q\Z)^n$ are rows of a matrix
with determinant $1$, and $w\in(\Z/q\Z)^n$ is such that $\det(v_1,\ldots,v_{n-1},w)=0$,
then we can find $\alpha_{1},\ldots,\alpha_{n-1}\in \Z/q\Z$ such that $w=\sum_{i=1}^{n-1}\alpha_{i}v_{i}$.
\end{lemma}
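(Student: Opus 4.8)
The plan is to work over $\Z/q\Z$ but to exploit the fact that $v_1,\ldots,v_n$ are rows of a matrix $M$ with $\det M = 1$, so $M\in\SL_n(\Z/q\Z)$ is invertible. The key point is that invertibility of $M$ gives us a dual basis: there exist vectors $e_1^*,\ldots,e_n^*\in(\Z/q\Z)^n$ (the rows of $M^{-1}$, up to transpose conventions) with $\langle v_i, e_j^*\rangle = \delta_{ij}$. Any $w\in(\Z/q\Z)^n$ can then be written as $w = \sum_{i=1}^n \alpha_i v_i$ with $\alpha_i = \langle w, e_i^*\rangle$, simply because $\{v_i\}$ is a basis of the free module $(\Z/q\Z)^n$. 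So the decomposition always exists; the content of the lemma is that the coefficient $\alpha_n$ vanishes precisely because of the determinant hypothesis $\det(v_1,\ldots,v_{n-1},w) = 0$.

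The cleanest way I would extract $\alpha_n = 0$ is via Cramer-type / cofactor reasoning, but since we are over a ring with zero divisors I would instead argue as follows. Write $w = \sum_{i=1}^{n-1}\alpha_i v_i + \alpha_n v_n$. Then by multilinearity and alternation of the determinant,
\[
\det(v_1,\ldots,v_{n-1},w) = \alpha_n \det(v_1,\ldots,v_{n-1},v_n) = \alpha_n \cdot 1 = \alpha_n,
\]
since all the terms with $v_i$ ($i\le n-1$) in the last slot vanish (repeated row). The hypothesis says the left-hand side is $0$, hence $\alpha_n = 0$, and therefore $w = \sum_{i=1}^{n-1}\alpha_i v_i$, as required.

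The only thing that needs a word of justification is the existence of the decomposition $w=\sum_i\alpha_i v_i$ in the first place, i.e.\ that the rows of an $\SL_n(\Z/q\Z)$ matrix span $(\Z/q\Z)^n$ as a module: this is immediate because $M$ has a two-sided inverse $M^{-1}$ over $\Z/q\Z$, so $w = (wM^{-1})M$ expresses $w$ as the $\Z/q\Z$-linear combination of the rows of $M$ with coefficient vector $wM^{-1}$. I expect this to be entirely routine; the main (very minor) obstacle is just being careful that determinant multilinearity over the commutative ring $\Z/q\Z$ is being used correctly, which it is, since the determinant is a polynomial identity valid over any commutative ring. No geometry-of-numbers or size estimates enter here — this lemma is purely algebraic bookkeeping to be used in the next section when fixing the reduction $\bmod\ q$ of the last row.
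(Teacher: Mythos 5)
Your proposal is correct and follows essentially the same route as the paper: invertibility of the matrix over $\Z/q\Z$ gives the decomposition $w=\sum_{i=1}^n\alpha_i v_i$, and the coefficient $\alpha_n$ is killed by the determinant hypothesis. The paper cites Cramer's rule for this last step, which is exactly the multilinearity-and-alternation computation you carry out explicitly, so the two arguments coincide.
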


\begin{proof}
Since the matrix composed of the rows $v_1,\ldots,v_n$ has determinant $1$, it has
an inverse, see \cite[Proposition XIII.4.16]{lang}.
Therefore, $w$ is a linear combination of the vectors  $v_1,\ldots,v_n$.
It follows from Carmer's rule (\cite[Theorem XIII.4.4]{lang}) that the coefficient
of $v_n$ must be $0$ in this linear combination.
\end{proof}

\begin{proof}[Proof of \cref{thm:upper bound}]
Given $A\in \SL_n(\Z/q\Z)$, by \cref{prop:step 1}, we may find a matrix $B\in M_{n-1,n}(\Z)$ such that $B \mod q$ agrees with $A$ on the first $n-1$ rows, the entries of $B$ are bounded by $Cq\log q$, and $B$ may be completed to a matrix in $\SL_n(\Z)$ using a vector $v_n \in \Z^n$.  
	
By \cref{prop:step 2}, we may assume that the entries of $v_n$ are bounded by $Cq\log q$.
	
To complete the proof, we need to change $v_{n}$ so that it agrees with the last row $a_n$ of $A$ modulo $q$. 
	
It holds that
$\det\left(\begin{array}{c}
A\\
a_{n}-v_{n}
\end{array}\right)\equiv0\mod q$,
which implies, by \cref{lem:linear algebra}, that there are $\alpha_1,\ldots,\alpha_{n-1}\in \Z/q\Z$ such that $a_{n}-v_{n}\equiv \sum_{i=1}^{n-1}\alpha_{i}a_{i} \mod q$, where the $a_i$ are the rows of $A$.
Writing $b_i$ for the rows of $B$,
\[
\left(\begin{array}{c}
B\\
v_{n}+\sum\wt\alpha_{i}b_{i}
\end{array}\right)
\]
is the required lift, and its entries are bounded by $Cq^2 \log q$.
\end{proof}

\subsection{Lifting the first \texorpdfstring{$n-1$}{n-1} rows}
\label{sec:first step}
We first give the proof in the $n=2$ case, which is simpler.
In that case, we need to lift two coprime elements of $\Z/q\Z$ to coprime elements
of $\Z$.
The next lemma will take care of the large primes.
For $a,b,P \in \Z$, we say that $(a,b)_{P}=1$ if whenever $p$ is a prime with $p|a$ and $p|b$ then $p|P$. 

\begin{lemma}
	\label{lem:disjointness lemma}For every $\epsilon>0$, there are $K>0$,
	$C>0$ such that the following holds.
	Let $P$ be the product of primes smaller than $K$,
	and let $a,b\in\Z$, $q\in\Z_{>0}$, $T>C\log\max\{q,|a|,2\}$.
	Then 
	\[
	\left\{ (x,y)\in\Z^{2}: (a+xq,b+yq)_{Pq}\neq1,\left|x\right|\le T,\left|y\right|\le T\right\} \le\epsilon T^{2}.
	\]
\end{lemma}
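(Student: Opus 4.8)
The plan is to bound the count of bad pairs $(x,y)$ by summing, over the primes $p$ that could be responsible for a common factor, the number of pairs for which $p \mid a + xq$ and $p \mid b + yq$. We split the primes into two ranges. For the small primes $p < K$: these all divide $P$, so they impose no constraint, and they contribute nothing to the bad set. Hence we only need to worry about primes $p \ge K$ with $p \nmid q$ (if $p \mid q$ then $p \mid Pq$ and again there is no constraint). For such a prime $p$, the conditions $a + xq \equiv 0 \pmod p$ and $b + yq \equiv 0 \pmod p$ each pin down $x$ and $y$ in a single residue class mod $p$ (since $q$ is invertible mod $p$), so the number of $(x,y)$ with $|x|,|y| \le T$ satisfying both is at most $(2T/p + 1)^2$.

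Next I would organise the sum over the relevant primes. A bad pair $(x,y)$ has some prime $p \ge K$, $p \nmid q$, dividing both $a+xq$ and $b+yq$; but $a + xq$ is a nonzero integer (as long as it is nonzero — I will have to treat $a + xq = 0$ separately, but there are at most $2T+1$ such pairs since then $x$ is determined, which is negligible) of absolute value at most $|a| + Tq$, so it has at most $O(\log(|a| + Tq)) = O(\log\max\{q,|a|,2\} + \log T)$ prime factors, and in particular only finitely many primes $p$ are ever relevant for a given $(x,y)$. The cleanest approach is: $\sum_{p \ge K,\, p\nmid q}(2T/p+1)^2 \le 4T^2 \sum_{p \ge K} p^{-2} + (\text{lower order})$. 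The tail $\sum_{p \ge K} p^{-2}$ can be made smaller than $\epsilon/8$ by choosing $K$ large; but this naive sum is over \emph{all} primes $p \ge K$, and the "$+1$" terms and the number of primes up to the trivial bound $|a| + Tq$ would blow up. So instead I would be more careful: for each bad pair the offending prime divides $a + xq$, an integer of size $\le |a| + Tq \le q^{O(1)}$ (using $T \le $ some polynomial bound is not available, so rather use $|a| + Tq$ directly); split further into $K \le p \le T$ and $p > T$.

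For $K \le p \le T$: here $(2T/p + 1)^2 \le (3T/p)^2 = 9T^2/p^2$, so these primes contribute $\le 9T^2 \sum_{p \ge K} p^{-2} \le \epsilon T^2 / 2$ once $K = K(\epsilon)$ is large enough. For $p > T$: now $2T/p + 1 < 2$, so $(2T/p+1)^2 < 4$, and the total contribution is at most $4$ times the number of primes $p > T$ with $p \mid a + xq$ for some admissible $x$; but for a \emph{fixed} $x$, the number of primes $p > T$ dividing the nonzero integer $a + xq$ is at most $\log_T(|a| + Tq) = O\big(\tfrac{\log q + \log|a| + \log T}{\log T}\big)$, which is $O(1)$ once $T > C\log\max\{q,|a|,2\}$ with $C$ large, and for each such prime $y$ lies in one residue class mod $p$, hence in at most $2T/p + 1 < 2$ values; summing over the $\le 2T+1$ choices of $x$ gives $O(T)$, which is $\le \epsilon T^2/2$ for $T$ large. (The degenerate pairs with $a + xq = 0$, at most $2T+1$ of them, are absorbed here too.) Combining the two ranges gives the bound $\epsilon T^2$.

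The main obstacle is the range $p > T$: the naive "sum $1/p^2$ over primes" argument fails there because there can be many such primes in absolute terms (up to the trivial divisor bound $|a| + Tq$, which is \emph{not} polynomial in $T$). The fix, and the reason the hypothesis $T > C\log\max\{q,|a|,2\}$ appears, is that one must fix $x$ first and then count primes dividing the single integer $a + xq$: this gives only $O\big((\log q + \log|a|)/\log T\big) = O(1)$ primes per $x$, turning a potential $T \cdot \log q$ into $O(T)$. Getting the logarithmic dependence on $|a|$ and $q$ to land exactly in the stated form $T > C\log\max\{q,|a|,2\}$ is the one genuinely delicate bookkeeping point; everything else is the standard "$1/p^2$ is summable" heuristic made rigorous.
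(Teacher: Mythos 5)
Your proposal follows essentially the same route as the paper's proof: small primes are absorbed into $P$, primes $K\le p\le O(T)$ are handled by the residue-class count and the summability of $\sum p^{-2}$, and primes $p>T$ are handled by fixing $x$ and counting the prime divisors of the single integer $a+xq$, with the degenerate case $a+xq=0$ treated separately. One quantitative slip: under the hypothesis $T>C\log\max\{q,|a|,2\}$, the number of primes $p>T$ dividing $a+xq$ is \emph{not} $O(1)$ --- since $|a|$ can be as large as $e^{T/C}$, it can be of order $T/(C\log T)$ --- so the large-prime contribution is $O\bigl(T^2/(C\log T)\bigr)$ rather than $O(T)$; this is still below $\epsilon T^2/2$ once $C$ is large (which forces $T>C$ as well), so the argument survives, and indeed the paper uses the even cruder bound $(2T+1)\log_2(|a|+Tq)$ for this term.
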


We record a simple fact, which will be used in the proof.
\begin{lemma}\label{lem:trivial lemma 2}
	Let $q$ be a natural number and $v_0\in \Z/q\Z$. Then 
	\[
	|\{x\in \Z : |x|\le T, x\equiv v_0 \mod q\}|\le \left\lceil \frac{2T+1}{q}\right\rceil.
	\]
\end{lemma}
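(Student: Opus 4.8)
The plan is to observe that the set $S=\{x\in\Z:|x|\le T\}$ is a block of consecutive integers, and then to count how many of them can lie in a single residue class modulo $q$. Concretely, I would first note that $S$ contains exactly $2\lfloor T\rfloor+1$ integers, so $|S|\le 2T+1$ as real numbers. Then I would partition $S$ into consecutive blocks of $q$ integers, allowing the final block to be shorter; the number of such blocks is exactly $\lceil |S|/q\rceil$. Since any $q$ consecutive integers contain exactly one representative of each residue class modulo $q$, and a strictly shorter block contains at most one, each block contributes at most one element $x$ with $x\equiv v_0\bmod q$. Hence $|\{x\in S: x\equiv v_0 \bmod q\}|\le \lceil |S|/q\rceil$.

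To finish, I would use monotonicity of the ceiling function together with $|S|\le 2T+1$ to conclude
\[
|\{x\in\Z:|x|\le T,\ x\equiv v_0\bmod q\}|\le\left\lceil\frac{|S|}{q}\right\rceil\le\left\lceil\frac{2T+1}{q}\right\rceil,
\]
which is the asserted bound. There is essentially no obstacle here: the only minor point to state carefully is that $\lceil\cdot\rceil$ is non-decreasing, so that replacing the integer $|S|$ by the possibly larger real number $2T+1$ inside the ceiling does not decrease the right-hand side. Alternatively, one could phrase the counting step via the observation that if there were $j$ elements of $S$ in the class of $v_0$, the largest and smallest differ by $(j-1)q\le |S|-1$, giving $j\le 1+(|S|-1)/q$, which one checks is at most $\lceil |S|/q\rceil$; I would use whichever phrasing reads more cleanly.
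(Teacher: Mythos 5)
Your proof is correct. The paper states this lemma without proof (it is introduced as ``a simple fact''), and your block-partition argument --- splitting the $2\lfloor T\rfloor+1$ consecutive integers into $\lceil |S|/q\rceil$ runs of at most $q$ consecutive integers, each containing at most one representative of the class of $v_0$, then using monotonicity of the ceiling --- is exactly the standard justification the authors are implicitly relying on.
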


\begin{proof}[Proof of \cref{lem:disjointness lemma}]
	By \cref{lem:trivial lemma 2}, the number of times that a prime $p\nmid q$ divides $a+xq$ for $|x|\le T$ is at most $(2T+1)/p+1$. Therefore, the number of times a prime $p\le2T+1$, $p\nmid q$ divides both $a+xq$ and $b+yq$ is at most $(2(2T+1))^2/p^2\le 36 T^{2}/p^{2}$.
	
	Now fix some $x$ such that $a+xq\ne0$.
	The number of primes dividing $a+xq$ is at most $\log_{2}(|a|+Tq)$.
	Given such a prime with $p>2T+1$ and $p\nmid q$, the number of times it divides $b+yq$ is at most $1$. 
	This gives at most $(2T+1)\log_2(|a|+Tq)$ pairs $(x,y)$ such that $\gcd(a+xq,b+yq)$
	is divisible by a prime greater than $2T+1$ not dividing $q$.
	
	Finally, we should also consider the possibility that $a+xq=0$, which happens for at most one $x$ and gives at most $2T+1$ other pairs with $\gcd(x,y)_{Pq}\neq0$.
	
	Therefore, 
	\begin{align*}
		& \left\{ x,y\in\Z^{2}: (a+xq,b+yq)_{Pq}\neq 1,\left|x\right|\le T,\left|y\right|\le T\right\} \\
		& \le36T^{2}\sum_{K<p\le2T+1}p^{-2}+(2T+1)\log_2(|a|+Tq)+(2T+1).
	\end{align*}
	By choosing $K$ sufficiently large, we can make sure that 
	\[
	36T^{2}\sum_{p>K}p^{-2}<\epsilon T^{2}/2,
	\]
	and by choosing $C$ sufficiently large and $T>C\log_2\max\{q,|a|,2\}$, we can make sure that 
	\[
	(2T+1)\log(|a|+Tq)+2T+1<\epsilon T^{2}/2.
	\]
	The claim follows.
\end{proof}

\begin{proof}[Proof of \cref{prop:step 1} in the $n=2$ case]
Note that $\gcd(\wt A_{11},\wt A_{12},q)=1$.
To prove the proposition, it is enough
to change both $\wt A_{11}$ and $\wt A_{12}$ by multiples of $q$ to make them coprime.

Let $\epsilon=0.1$, and let $K,C$ be as in \cref{lem:disjointness lemma}. 

Let $P$ be the product of primes smaller than $K$, without the divisors of $q$.
Let $a_{1}=\wt A_{11}+x_{1}q$, $b_{1}=\wt A_{12}+y_{1}q$ be such that $a_{1}\equiv b_{1}\equiv 1\mod P$.
This can be done with $0\le x_{1},y_{1}\le P-1$, and we have $\gcd(a_1,b_1,Pq)=1$.

Then we use \cref{lem:disjointness lemma} with $Pq$ in the role of $q$
to find $0\le x_{2},y_{2}\le C\log_{2}q$
such that $a_{2}=a_{1}+x_{2}Pq$, $b_{2}=b_{1}+y_{2}Pq$ satisfy
$(a_{2},b_{2})_{Pq}=1$. Since $\gcd(a_1,b_1,Pq)=1$, we must have
$\gcd(a_2,b_2,Pq)=1$ and then $\gcd(a_{2},b_{2})=1$ follows. 

To conclude the proof, we may take $X_{11}=x_1+x_2P$ and $X_{12}=y_1+y_2P$ so that
the entries of $\wt A+qX$ are $a_2$ and $b_2$.
\end{proof}

\begin{remark}\label{rmk:compare with Sarnak}
Our proof differs from the one in \cite{sarnak2015lettermiller} in that we
modify both $\wt A_{11}$ and $\wt A_{12}$ by a multiple of $q$ rather than just one of them.
Indeed, Sarnak finds an element coprime to $\wt A_{11}$ in the arithmetic progression
$\wt A_{12}+jq$ using a bound on the Jacobsthal function due to Iwaniec \cite{iwaniec1978},
which leads to a slightly weaker bound.
\end{remark}
For the general case, we need some lemmata. 
\begin{lemma}
\label{lem:primitive lattice}Let $v_{1},\ldots,v_{k}\in\Z^{n}$ for $k<n$. 
The following three conditions are equivalent:
\begin{enumerate}
\item There is a matrix in $\SL_{n}(\Z)$ with first $k$ rows $v_{1},\ldots,v_{k}$.
\item There is a rational subspace $V\subset \R^n$ such that $v_1,\ldots,v_k$ are a basis of the lattice $\Gamma = V\cap \Z^n$.
\item For every prime $p$, $v_{1}\mod p,\ldots,v_{k}\mod p$ are linearly independent as elements
of the vector space $(\Z/p\Z)^{n}$ over the finite field $\Z/p\Z$.
\end{enumerate}
\end{lemma}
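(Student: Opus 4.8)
The plan is to prove the cycle of implications $(1)\Rightarrow(2)\Rightarrow(3)\Rightarrow(1)$, with the first two implications being essentially standard facts about primitive sublattices and the last one requiring a genuine argument.

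First I would dispatch $(1)\Rightarrow(2)$. Given a matrix $g\in\SL_n(\Z)$ whose first $k$ rows are $v_1,\ldots,v_k$, let $V=\spann_\R(v_1,\ldots,v_k)$. Since $g$ is invertible over $\Z$, the rows of $g$ form a $\Z$-basis of $\Z^n$, and from this one checks directly that $v_1,\ldots,v_k$ is a basis of $V\cap\Z^n$: any $w\in V\cap\Z^n$ can be written $w=\sum c_i v_i$ with $c_i\in\R$, and expanding $w$ in the full basis given by the rows of $g$ shows the coefficients on rows $k+1,\ldots,n$ vanish and the $c_i$ are the remaining integer coefficients. For $(2)\Rightarrow(3)$, suppose $v_1,\ldots,v_k$ is a basis of $\Gamma=V\cap\Z^n$. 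If $\sum \bar c_i (v_i\bmod p)=0$ in $(\Z/p\Z)^n$ with not all $\bar c_i$ zero, lift to integers $c_i$ not all divisible by $p$; then $w=\tfrac1p\sum c_i v_i\in\Z^n$ (since $\sum c_iv_i\equiv 0 \bmod p$) and $w\in V$, so $w\in\Gamma$, yet $w$ is not an integer combination of $v_1,\ldots,v_k$ because the matrix expressing $w$ in terms of the $v_i$ has a $1/p$ entry --- contradicting that they form a basis of $\Gamma$.

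The main work is $(3)\Rightarrow(1)$. The cleanest route is via Smith normal form: regard $M=(v_1,\ldots,v_k)^{\mathrm{t}}$ as a $k\times n$ integer matrix of full rank $k$ (full rank because the $v_i$ are linearly independent mod every prime, in particular over $\Q$), and write $M=U D W$ with $U\in\GL_k(\Z)$, $W\in\GL_n(\Z)$, and $D=(\diag(d_1,\ldots,d_k)\mid 0)$ the $k\times n$ matrix of elementary divisors $d_1\mid d_2\mid\cdots\mid d_k$. Condition $(3)$ forces every $d_i=1$: if some prime $p$ divided $d_k$, then reducing $M\bmod p$ would have rank $<k$, i.e.\ the rows $v_i\bmod p$ would be dependent. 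Hence $M W^{-1}=(I_k\mid 0)\cdot (\text{rescale by }U)$, so after the unimodular column change $W$ the rows $v_1,\ldots,v_k$ become (up to the unimodular row change $U$) the first $k$ standard basis vectors; concretely, the matrix whose first $k$ rows are $v_1,\ldots,v_k$ and whose last $n-k$ rows are the images under $W^{-1}$ of $e_{k+1},\ldots,e_n$ (with a suitable row scaled to fix the determinant via $U$) lies in $\GL_n(\Z)$, and one adjusts the sign of one appended row to land in $\SL_n(\Z)$. I would present this by saying: by $(3)$ the Smith normal form of $M$ is $(I_k\mid 0)$, so there is $W\in\GL_n(\Z)$ with $v_i W = $ (an integer combination of $e_1,\ldots,e_k$) forming a $\GL_k(\Z)$ matrix; completing with $e_{k+1}W^{-1},\ldots,e_nW^{-1}$ and fixing the determinant gives the desired element of $\SL_n(\Z)$.

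The step I expect to be the real obstacle --- or at least the one needing care in the write-up --- is $(3)\Rightarrow(1)$, specifically the deduction that all elementary divisors are $1$ and then the bookkeeping to land in $\SL_n$ rather than merely $\GL_n(\Z)$; the sign fix is trivial ($n-k\ge 1$ so one appended row can be negated), but it should be stated. The other implications are routine linear algebra over $\Z$ and I would keep them brief.
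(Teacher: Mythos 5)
Your proof is correct, but it is organised differently from the paper's. The paper proves $(2)\Leftrightarrow(1)$ by citing Siegel for $(2)\Rightarrow(1)$ and calling the converse trivial, and then proves $(1)\Leftrightarrow(3)$ by a self-contained simultaneous reduction: it observes that both conditions are invariant under integer row operations on the $v_i$ and unimodular column operations on the coordinates, and reduces to the canonical form $v_{i,j}=\delta_{i,j}$ (or shows both conditions fail along the way). You instead close the cycle $(1)\Rightarrow(2)\Rightarrow(3)\Rightarrow(1)$ with direct arguments: your $(1)\Rightarrow(2)$ and $(2)\Rightarrow(3)$ steps are short and correct (the key points being that the rows of $g$ form a $\Z$-basis of $\Z^n$, and that a mod-$p$ dependence produces $\frac1p\sum c_iv_i\in\Gamma$ outside the $\Z$-span of the $v_i$), and your $(3)\Rightarrow(1)$ invokes Smith normal form as a black box, noting that condition $(3)$ forces all elementary divisors to equal $1$, after which completion by the remaining rows of the unimodular matrix and a sign flip lands in $\SL_n(\Z)$. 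At bottom the two arguments are the same mathematics, since Smith normal form is established by exactly the elementary operations the paper performs by hand; what your version buys is a cleaner statement of the key invariant (the elementary divisors) at the cost of citing the normal form, while the paper's version is fully self-contained and handles $(1)\Leftrightarrow(3)$ without naming that machinery. Your treatment of $(2)$ also replaces the citation to Siegel with an explicit argument, which is a genuine (if routine) addition.
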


This is standard, but we give a proof for the reader's convenience.

\begin{proof}
    For a proof that the second condition implies the first, see \cite[Lecture VIII \S 1]{siegel1989lectures}.
    The converse is trivial.

    To prove the equivalence of the first and third conditions, look at the $k\times n$ matrix $A=(v_{i,j})$ whose rows are $v_1,\ldots,v_k$. 
    
    Notice that both conditions are invariant by the following two sets of elementary operations, which can be seen as elementary row and column operations on the matrix $A$: 
    \begin{enumerate}
        \item For some $1\le i\ne j\le k$, $\alpha \in \Z$, replace $v_i$ by $v_i +\alpha v_j$.
        \item For all $1\le i\le k$, for some $1\le j\ne l\le n$, $\alpha \in \Z$, replace $v_{i,j}$ by $v_{i,j}+\alpha v_{i,l}$.
    \end{enumerate}
    If the $\gcd$ of the entries of $v_1$ is not $1$, both conditions obviously fail. If the $\gcd$ of the entries of $v_1$ is $1$, we may assume (by using operations of type (2)) that $v_1$ is equal to $v_1= (1,0,\ldots,0)$ and (by using operations of type (1)) that $v_{i,1}=0$ for $i>1$. 
    Repeating this procedure, we see that either both conditions fail, or we can
    transform the vectors using the above two types of operations such that $v_{i,i}=1$
    and $v_{i,j}=0$ for $i\neq j$ for all $i$ and $j$.
    In this case, both conditions are easily seen to hold.
\end{proof}

\begin{lemma}\label{lem:large primes}
There are constants $K=K(n)$ and $C=C(n)$ such that for every $A\in M_{n-1,n}(\Z/q\Z)$
there is $X\in M_{n-1,n}(\Z)$ with coordinates bounded by $C\log q$ such that $\wt A+qX$ has linearly independent rows over
any prime $p>K$ not dividing $q$.
\end{lemma}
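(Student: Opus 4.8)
The plan is to modify the rows of $A$ one prime at a time, using the Chinese Remainder Theorem to bundle finitely many "bad" primes into a single congruence condition that can be satisfied by a small multiple of $q$.

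First I would recall the criterion from \cref{lem:primitive lattice}: a tuple of $n-1$ integer rows can be completed to an element of $\SL_n(\Z)$ (equivalently, is linearly independent $\bmod\ p$ for every prime $p$) iff it is linearly independent over $\Z/p\Z$ for all $p$. The statement of \cref{lem:large primes} only asks us to handle primes $p > K$ with $p \nmid q$, so the strategy is purely about these large primes. For a prime $p \nmid q$, reduction $\bmod\ p$ of $A + qX$ depends only on $X \bmod p$ and on $q^{-1} \bmod p$; so the condition "rows of $A+qX$ are independent $\bmod\ p$" is a condition on $X \bmod p$ that, for each fixed $p$, is satisfied by a positive proportion of residue classes. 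More precisely, I expect to count: the number of $X \in M_{n-1,n}(\Z/p\Z)$ for which the rows of $A + qX$ become \emph{dependent} $\bmod\ p$ is a proper subvariety, so it is at most (some explicit constant) $\cdot\, p^{(n-1)n - 1}$, i.e. a fraction $O(1/p)$ of all of $(\Z/p\Z)^{(n-1)n}$.

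The key step is then a union-bound / sieve argument exactly parallel to the proof of \cref{lem:disjointness lemma} above. Fix a box of side $T = C\log q$ for the entries of $X$; I would estimate the number of $X$ in this box that are "bad" for \emph{some} prime $p$ with $K < p$, $p\nmid q$. For a single such $p$, by \cref{lem:trivial lemma 2} the number of $X$ in the box lying in a fixed bad residue class $\bmod\ p$ is at most $\lceil (2T+1)/p \rceil^{(n-1)n}$, and summing the count of bad classes times this gives roughly $T^{(n-1)n}/p$ contributions for $p \le 2T+1$; for larger primes $p > 2T+1$ dividing a nonzero maximal minor of $A+qX$, each $X$ has only boundedly many such $p$ and the total is controlled by $T^{(n-1)n - 1}\log(|A|+Tq)$-type terms, which is $o(T^{(n-1)n})$ once $T \gg \log q$. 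Choosing $K$ large makes $\sum_{p > K} 1/p$-style tails small, so the total number of bad $X$ in the box is strictly less than $T^{(n-1)n}$, the total number of $X$ in the box. Hence some good $X$ exists with entries bounded by $C\log q$, and \cref{lem:primitive lattice} then guarantees the rows are independent $\bmod\ p$ for all primes $p > K$ with $p\nmid q$.

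The main obstacle is making the "positive proportion" claim quantitative and uniform in $A$ and in $p$: I need that the number of $X \in M_{n-1,n}(\Z/p\Z)$ making the rows $\bmod\ p$ dependent is at most $c_n\, p^{(n-1)n-1}$ with $c_n$ depending only on $n$, and that a nonzero maximal minor of $A+qX$ (as a polynomial in the entries of $X$) is not identically zero $\bmod\ p$ — this is where I'd use that $A$, being (or being assumed completable to) an $\SL_n$ matrix, already has coprime data, or more safely just argue the minor is a nonzero polynomial whose reduction $\bmod\ p$ stays nonzero for $p$ large. A clean way around subtleties is the Schwartz–Zippel-type bound: the locus where a fixed degree-$(n-1)$ polynomial in $(n-1)n$ variables vanishes $\bmod\ p$ has at most $(n-1)p^{(n-1)n - 1}$ points, provided the polynomial is nonzero $\bmod\ p$. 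Establishing that non-vanishing uniformly — essentially, that the matrix $A$ has a maximal minor not divisible by all large primes, which follows because otherwise $A$ could not be completable over $\Z$ — is the step requiring the most care, and once it is in place the sieve is routine.
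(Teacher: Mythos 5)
There is a genuine gap in the quantitative heart of your sieve. You bound the number of bad residue classes $X \bmod p$ by a Schwartz--Zippel count for the vanishing of a single maximal minor, which gives a proportion $O(1/p)$, and you then assert that ``choosing $K$ large makes $\sum_{p>K}1/p$-style tails small.'' But $\sum_p 1/p$ diverges: by Mertens, $\sum_{K<p\le 2T+1}1/p \sim \log\log T - \log\log K$, which tends to infinity with $q$ for any fixed $K$. So the contribution $\sum_{K<p\le 2T+1} T^{(n-1)n}/p$ from the medium-sized primes is \emph{not} $o(T^{(n-1)n})$, and the union bound as written does not close. A per-prime saving of $1/p$ is simply not enough; you need $1/p^2$. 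The fix is available within your own framework: rank deficiency of an $(n-1)\times n$ matrix over $\Z/p\Z$ is the simultaneous vanishing of all $n$ maximal minors, a codimension-two condition, so the number of bad classes is in fact $O_n(p^{(n-1)n-2})$ (the proportion of rank-deficient $(n-1)\times n$ matrices over $\mathbb{F}_p$ is $1-\prod_{i=0}^{n-2}(1-p^{i-n})=O(p^{-2})$, and for $p\nmid q$ the map $X\mapsto A+qX$ is a bijection on classes). With that correction your tail becomes $\sum_{p>K}1/p^2$, which is small, and the rest of your outline (small primes handled elsewhere, primes $p>2T+1$ handled via divisor counting of a nonzero minor, plus a separate case when the relevant integer minors all vanish) is in the right spirit, though the large-prime case needs the same care the paper takes.

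It is worth noting that the paper avoids this issue by a different device: it inducts on $n$, fixing the first $n-2$ columns by the inductive hypothesis and only varying the last two columns; linear dependence of the rows modulo $p$ then forces \emph{both} new columns into the span of the first $n-2$, and the two independent events give the $1/p^2$ directly. Your direct approach, once repaired with the codimension-two count, is arguably cleaner in that it avoids the induction entirely, but as submitted the divergence of $\sum 1/p$ is a fatal error, not a routine detail. A minor further point: the non-vanishing modulo $p$ of the minor as a polynomial in $X$ is automatic for $p\nmid q$ (its leading coefficients are units), so no appeal to completability of $A$ is needed there; the delicate point for $p>2T+1$ is rather that a \emph{specific} integer matrix in the box may have all maximal minors equal to zero, which must be counted separately.
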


To prove this lemma, we use a generalized version of \cref{lem:trivial lemma 2}.
\begin{lemma}\label{lem:counting mod p}
	Let $p$ be a prime, let $V\subset (\Z/p\Z)^n$ be a proper subspace, and let $v_0\in (\Z/p\Z)^{n}$.
	Then for every $T>0$,
	\begin{align*}
	\left|\{(x_1,\ldots,x_n)\in \Z^n : (x_1,\ldots,x_n) \mod p \in v_0+V, |x_i|\le T\}\right|\\
	\le (2T+1)^{n-1}\left\lceil \frac{2T+1}{p}\right\rceil.
	\end{align*}
\end{lemma}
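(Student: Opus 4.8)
The plan is to fix a coordinate direction in which the subspace $V$ is "thin" and slice the cube $\{|x_i|\le T\}$ along that direction. Since $V$ is a proper subspace of $(\Z/p\Z)^n$, after choosing coordinates we may assume without loss of generality that $V$ does not contain the $n$-th standard basis vector $e_n$ modulo $p$; equivalently, the projection of $v_0+V$ onto the last coordinate, once we fix the first $n-1$ coordinates, is either empty or a single residue class modulo $p$. Concretely, the affine subspace $v_0+V$ is cut out by at least one linear equation $\sum_j c_j x_j \equiv d \pmod p$ with $c_n \not\equiv 0 \pmod p$ (we pick the equation and relabel coordinates so that this holds). First I would write the count as a sum over the first $n-1$ coordinates:
\[
\sum_{\substack{x_1,\ldots,x_{n-1}\in\Z\\ |x_i|\le T}} \big|\{x_n\in\Z : |x_n|\le T,\ x_n \equiv c_n^{-1}(d - \textstyle\sum_{j<n} c_j x_j) \bmod p\}\big|.
\]

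For each fixed tuple $(x_1,\ldots,x_{n-1})$, the inner set is the set of integers in $[-T,T]$ lying in a single residue class modulo $p$ (or possibly the empty set, if for that tuple no point of $v_0+V$ has those first coordinates), so by \cref{lem:trivial lemma 2} its cardinality is at most $\lceil (2T+1)/p\rceil$. There are at most $(2T+1)^{n-1}$ choices of the outer tuple $(x_1,\ldots,x_{n-1})$ with $|x_i|\le T$, since each $x_i$ ranges over at most $2T+1$ integers. Multiplying the two bounds gives the claimed estimate $(2T+1)^{n-1}\lceil (2T+1)/p\rceil$.

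I expect no real obstacle here; the only point requiring a word of care is the reduction to an affine equation with invertible last coefficient. This is immediate: a proper subspace $V\subset(\Z/p\Z)^n$ has a nonzero annihilator in the dual space, i.e.\ there is a nonzero linear functional vanishing on $V$, hence on $v_0+V$ up to the constant $d$; at least one of its coefficients $c_j$ is a unit mod $p$ (all of them being nonzero iff they are units, since $p$ is prime), and we relabel so that $j=n$. Then the fibres of the projection $v_0+V \to (\Z/p\Z)^{n-1}$ onto the first $n-1$ coordinates are each either empty or a single residue class in the last coordinate, which is exactly what the slicing argument needs. Everything else is the elementary counting above together with \cref{lem:trivial lemma 2}.
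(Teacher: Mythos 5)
Your proof is correct and takes essentially the same route as the paper: the paper also singles out a coordinate direction in which $V$ is thin (phrased directly as ``some standard basis vector $e_i\notin V$'' rather than via a linear functional annihilating $V$), slices the cube along that coordinate, and applies \cref{lem:trivial lemma 2} to bound each slice by $\lceil (2T+1)/p\rceil$. The dual/annihilator formulation you use is an equivalent way of making the same reduction.
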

\begin{proof}
	Let $e_1,\ldots,e_{n}$ be the standard basis of $(\Z/p\Z)^{n}$. Since $V$ is proper, one of the $e_i$ does not belong to $V$.  
	Without loss of generality, we may assume that $e_1\notin V$. Then for every choice of $x_2,\ldots,x_{n}$, by \cref{lem:trivial lemma 2}, there are at most $\left\lceil \frac{2T+1}{p}\right\rceil$ choices of $x_1$ with $(x_1,\ldots,x_n) \mod p \in v_0+V$. The result follows.
\end{proof}

\begin{proof}[Proof of \cref{lem:large primes}]
We prove by induction, with the base case following from \cref{lem:disjointness lemma} when $n=2$.

Let $A'$ be the $(n-1)\times(n-2)$ submatrix of $A$, consisting of the first $n-2$ columns. By the induction hypothesis (and exchanging rows and columns), we can choose $X'\in M_{n-1,n-2}(\Z)$ with coordinates bounded by $C\log q$
such that $\wt A'+qX'$ has linearly independent columns modulo every $p>K$ that does not divide $q$. 

We will extend $X'$ to $X$.
The rows of $\wt A+qX$ will be linearly dependent modulo a prime $p$ if and only if both columns modulo $p$ will be linear sums of the first $n-2$ columns modulo $p$.
Fix a number $T$ that is larger than all entries of $X'$ to be determined later.
We consider all $(2T+1)^{n-1}$ choices for each of the last two columns of $X$
with entries bounded by $T$, and count the number of those choices that make
both new columns of $\wt A+qX$ a linear combination of the first $n-2$ modulo some prime $p$.  

Fix a prime $p\nmid q$.
Then for either of the last two columns of $\wt A+qX$,
the number of choices that make the column
a linear combination of the first $n-2$ columns
is at most $(2T+1)^{n-2}\left\lceil(2T+1)/p\right\rceil$,
by \cref{lem:counting mod p}.
This is bounded by $(6T)^{n-1}/p$ if $p\le 2T+1$ and by $(3T)^{n-2}$ if $p>2T+1$.

For primes $p\le 2T+1$, this gives at most $(6T)^{2n-2}/p^2$ times when the rows of $\wt A+qX$ will be linearly dependent modulo $p$.

To handle the primes $p>2T+1$, we distinguish two cases.
First we count those choices such that the determinant of the first
$n-1$ columns of $\wt A+qX$ is non-zero.
For every such choice of the $(n-1)$-th column, the determinant in question is bounded by
$\le C_{n}T^{n-1}q^{n-1}$, so it has at most $\le(n-1)\log_2 q+(n-1)\log_2 T+O_{n}(1)$
prime divisors. For each such prime $p>2T+1$, the number of times the last column is a linear sum modulo $p$ is at most $(3T)^{n-2}$. This gives another
\[
(3T)^{n-1}((n-1)\log_2 q+(n-1)\log_2 T+O_{n}(1))(3T)^{n-2}
\]
choices that make the rows of $\wt A+qX$ linearly dependent modulo some prime $p>2T+1$. 

Finally, by looking at a prime larger than $2T+1$, we know by the above
that the number of choices for the $n-1$-th column such that the determinant of the first $n-1$ columns of $\wt A+qX$ is zero modulo this prime is at most $(3T)^{n-2}$.
This also gives a bound on the number of times the same determinant is zero in $\Z$.
This gives at most another 
\[
(3T)^{n-2}(3T)^{n-1}
\]
choices of $X$ that make the rows of $\wt A+qX$ linearly dependent modulo some prime $p>2T+1$. 

The total sum is 
\begin{align*}
 & (6T)^{2n-2}\sum_{p>K}\frac{1}{p^{2}}\\
+ & (3T)^{2n-3}\left((n-1)\log_2 q+(n-1)\log_2 T+O_{n}(1)\right)\\
+ & (3T)^{2n-3}.
\end{align*}
By choosing $C,K$ large enough and $T>C\log_2 q$, the above is at most $0.01\cdot T^{2n-2}$, so there is a choice of $X$ as needed.
\end{proof}

\begin{proof}[Proof of \cref{prop:step 1}]
By \cref{lem:primitive lattice}, it is enough to show that the rows
of $\wt A+qX$ are linearly independent over any prime $p$. 

This is automatic for all primes $p|q$, because the rows of $A$, and hence the rows
of $\wt A+qX$ are linearly independent modulo such primes by assumption.

Let $K$ and $C$ be as in \cref{lem:large primes}. 

Let $P$ be the product of primes smaller than $K$, without the primes
dividing $q$. By choosing $0\le X_{i,j}^{(1)}\le P-1$ suitably, we may make sure
that the rows of $\wt A+qX^{(1)}$ are linearly independent modulo each $p|P$.
(We could arrange for $\wt A+qX^{(1)}\equiv I\mod p$ for each such prime for example.)

Applying \cref{lem:large primes} with $Pq$ in the role of $q$, we find $X^{(2)}$ with entries bounded by $C\log q$
such that $\wt A+qX^{(1)}+Pq X^{(2)}$ has linearly independent rows modulo $p$
for all primes $p$.
Combining the estimates for the entries of $X^{(1)}$ and $X^{(2)}$, the proof is
complete.
\end{proof}

\subsection{Completing the matrix}\label{sec:second step}

We prove \cref{prop:step 2} in this section.

We recall that the covolume $d(\Gamma)$ of a lattice $\Gamma \subset \R^n$ of rank $k$ is
defined as the volume of the fundamental domain of $\Gamma$ in the subspace $V= \spann (\Gamma)$.
We note that if $\Gamma \subset \Z^n$ then $d(\Gamma)^2 \in \Z$.

\begin{proof}[Proof of \cref{prop:step 2}]
Let $b_{1},\ldots,b_{n-1}$ be the rows of $B$. Consider the lattice
$\Gamma$ spanned by $b_{1}\ldots,b_{n-1}$, and let $V=\spann(\Gamma)$.

Let $v\in \Z^n$ be a vector that completes $B$ to a matrix in $\SL_{n}(\Z)$. 
Then we can write $v=u+w$, with $w\in V$ and $u\in V^{\perp}$. 
We know that $\|u\|_2d(\Gamma)=1$, and since $d(\Gamma)$
is the square root of a positive integer, $\|u\|_\infty \le\|u\|_2\le1$.

We can write $w=\sum_{i=1}^{n-1}\alpha_{i}b_{i}$ for some $\alpha_i\in\R$,
and if $[\alpha_{i}]$
is the closest integer to $\alpha_{i}$ it holds that
\[
\|w-\sum[\alpha_{i}]b_{i}\|_{\infty}\le\sum\frac{1}{2}\|b_{i}\|_{\infty}\le\frac{n}{2}T.
\]
We can complete $B$ using the vector $v-\sum[\alpha_{i}]b_{i}$,
and the result follows.
\end{proof}

\subsection{Proof of \texorpdfstring{\cref{lem:skewed counting}}{Lemma 1.6}}
\label{sec:skewed counting}

We can find all elements of $\SL_n(\Z)$ with entries in the first $n-1$ rows
bounded by $T$ and entries in the last row bounded by $T^2$
by performing the following steps.

\begin{enumerate}
\item Choose a sublattice $\Gamma \subset \Z^n$ of rank $n-1$ with $d(\Gamma)\le O(T^{n-1})$, where $d(\Gamma)$ is the covolume of the lattice as above.
\item Choose a basis of $\Gamma$ of $n-1$ elements whose norm is bounded by $T$.
These will be the first $n-1$ rows of the matrix.
\item Choose a vector from a suitable translate of $\Gamma$ of norm bounded by $T^2$.
This will be the last row of the matrix.
Indeed, observe that any two possible choices for the last row differ by an element
of $\Gamma$.
\end{enumerate}

We estimate now the number of choices in each of the steps above, which will give
an upper bound for the required count.

By \cite[Theorem~2]{schmidt1968asymptotic},
\[
|\{\Gamma \subset \Z^n : \operatorname{rank} (\Gamma) = n-1, d(\Gamma) \le S\}| \le C_n S^n
\]
for all $S\in\R_{>0}$.

By \cite[Lemma~2]{schmidt1968asymptotic},
\begin{equation}\label{eq:ball count}
|\{\gamma\in \Gamma : \|\gamma\| \le T\}| \le C_n \frac{T^{n-1}}{d(\Gamma)}.
\end{equation}
Thus, we have at most $O(T^{(n-1)^2}/d(\Gamma)^{n-1})$ choices to make in the
second step for any given $\Gamma$.

Similarly, using \cref{eq:ball count} for $T^2$ in place of $T$, we
see that we have at most $O(T^{2(n-1)}/d(\Gamma))$ choices in the third step.

Recall that $d(\Gamma)$ is always the square root of an integer, so it is at least $1$.
The number of choices in the above three steps when we pick a lattice $\Gamma$ in 
the dyadic range $2^j\le d(\Gamma)<2^{j+1}$ is
\[
O\Big(2^{jn}\cdot\frac{T^{(n-1)^2}}{2^{j(n-1)}}\cdot\frac{T^{2(n-1)}}{2^j}\Big)
=O(T^{n^2-1}).
\]
We sum this up for $j=0,1,\ldots, \lceil\log_2 O(T^{n-1})\rceil$, which yields the claim.

\section{\label{sec:affine and projective}Actions on affine space and projective space}

In this section, we discuss the natural extension of the problems of this paper to the action on affine space and projective space. Namely, we consider the natural action of $\SL_n(\Z)$ on
\[
A_q = \{(x_1,\ldots,x_n)\in(\Z/q\Z)^n:\gcd(q,x_1,\ldots,x_n)=1\}
\]
(considered as column vectors) and $P_q = A_q/(\Z/q\Z)^\times$. In both cases, we give the space the distance-like function
\[
d(x,y):= \min\{\log\|\gamma\|: \gamma \in \SL_n(\Z), \gamma\cdot x=y\}.
\]
The triangle inequality is satisfied since $\|\cdot\|$ is the operator norm. Notice that the distance is not symmetric for $n\ge3$, but is symmetric for $n=2$. 

We start by considering the almost-diameter. We recall that $|A_q|= q^{n+o(1)}$, and $|P_q|=q^{n-1+o(1)}$. Given \cref{eq:DRS}, the following is optimal, and just like \cref{thm:average case}, is a special case of the optimal lifting property introduced in \cite{golubev2023sarnak}.
\begin{conj}
    For every $\epsilon>0$, and for $q$ large enough depending on $\epsilon$, the following holds.
    \begin{enumerate}
        \item For every $(x,y)\in A_q^2$ outside a set of exceptions of size $\epsilon |A_q|^2$, it holds that $d(x,y)\le (\frac{1}{n-1}+\epsilon)\log q$.
        \item For every $(x,y)\in P_q^2$ outside a set of exceptions of  size $\epsilon |P_q|^2$, it holds that $d(x,y)\le (\frac{1}{n}+\epsilon)\log q$.
    \end{enumerate}
\end{conj}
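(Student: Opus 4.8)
The statement is a conjecture, so what follows is a proposed route rather than a complete argument; it is the natural adaptation to $A_q$ and $P_q$ of the ``optimal lifting'' circle of ideas in \cite{golubev2023sarnak,assing2022density,jana2022eisenstein-average}, and I will point to exactly where it is currently conditional. The plan is to recast the covering statement as a counting problem. For $x,y\in A_q$ and $T>0$ put
\[
N_T(x,y)=|\{\gamma\in\SL_n(\Z):\n{\gamma}\le T,\ \gamma\cdot x=y\}|,
\]
so that $d(x,y)\le\log T$ whenever $N_T(x,y)>0$. It therefore suffices to show that for $T=q^{1/(n-1)+\epsilon}$ one has $N_T(x,y)>0$ for all pairs outside a set of size $\le\epsilon|A_q|^2$; the case of $P_q$ then follows with $T=q^{1/n+\epsilon}$ since $P_q$ is a quotient of $A_q$ and $|P_q|=q^{n-1+o(1)}$ while $|A_q|=q^{n+o(1)}$.

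First I would pin down the expected main term. Summing over $y$ and using \cref{eq:DRS}, $\sum_{y\in A_q}N_T(x,y)=|F_T|=C_nT^{n^2-n}(1+o(1))$, so if $N_T(x,\cdot)$ is roughly equidistributed on $A_q$ the natural guess is $M:=C_nT^{n^2-n}/|A_q|=q^{\epsilon(n^2-n)+o(1)}\to\infty$ for our choice of $T$; replacing $|A_q|$ by $|P_q|$ reproduces the exponent $1/n$ in part (2). The heart of the matter is then a second-moment estimate,
\[
\sum_{x,y\in A_q}\bigl(N_T(x,y)-M\bigr)^2=o\bigl(|A_q|^2M^2\bigr),
\]
after which Chebyshev's inequality forces $N_T(x,y)\ge M/2>0$ off an exceptional set of the required size.

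Expanding the square, the off-diagonal term counts quadruples $(\gamma_1,\gamma_2,x,y)$ with $\gamma_ix=y$, $\n{\gamma_i}\le T$; eliminating $y$ this becomes a count of $\delta=\gamma_2^{-1}\gamma_1$ that fix a primitive vector modulo $q$, weighted by an archimedean cutoff. This is where automorphic analysis enters: one rewrites the sum over $\Gamma(q)\backslash\SL_n(\Z)$ (or directly as a ball average on $\Gamma(q)\backslash\SL_n(\R)$, in the spirit of the mean ergodic theorem used by Gorodnik--Nevo \cite{gorodnik2012lifting}) and invokes the spectral decomposition of $L^2(\SL_n(\Z)\backslash\SL_n(\R))$. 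The trivial representation produces $M$; the tempered cuspidal part, together with the quantitative form of Duke--Rudnick--Sarnak equidistribution and property $(\tau)$, contributes an acceptable error. The delicate pieces are (i) the residual and complementary-series (non-tempered) contributions, which need bounds towards the generalized Ramanujan conjecture for $\GL_n$, or a density substitute of Sarnak--Xue type as exploited in \cite{assing2022density}; and (ii) the Eisenstein spectrum, which cannot be avoided here because the stabiliser of a primitive vector is the non-cocompact group $\Z^{n-1}\rtimes\SL_{n-1}(\Z)$, so $A_q$ and $P_q$ feel the full continuous spectrum and one must control the Eisenstein contribution uniformly in $q$.

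The main obstacle — and the reason the statement is only a conjecture — is precisely (i): the generalized Ramanujan conjecture for $\GL_n$ is open for $n\ge3$, and the partial bounds available would still have to be carried through the Eisenstein analysis of (ii) with uniformity in $q$. For $n=2$ one expects Selberg's $1/4$-conjecture (or, at the cost of a worse $\epsilon$, the Kim--Sarnak bound) to suffice, exactly paralleling \cref{thm:average case}; for general $n$ a full proof appears to require essentially the same automorphic advances as the optimal lifting property for $\SL_n(\Z/q\Z)$ itself.
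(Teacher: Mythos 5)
The statement you were asked about is stated in the paper as a \emph{conjecture}, and the paper offers no proof of it: it explicitly records that the only case known is $P_q$ with $q$ prime and $n=3$ (due to \cite{kamber2023optimal}), and points to \cite{blomer2023density,assing2022density,jana2022eisenstein-average} for partial progress. So there is no paper proof to compare yours against. Your proposal is, correctly, not a proof either --- it is an outline of the standard ``optimal lifting'' strategy, and you are explicit about where it is conditional. Your pigeonhole calibration is right: $T^{n^2-n}\ge|A_q|=q^{n+o(1)}$ gives $T\ge q^{1/(n-1)}$, and $T^{n^2-n}\ge|P_q|=q^{n-1+o(1)}$ gives $T\ge q^{1/n}$, matching the conjectured exponents. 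The obstructions you name --- bounds towards the generalized Ramanujan conjecture for $\GL_n$ (or a Sarnak--Xue-type density substitute as in \cite{assing2022density}), and uniform-in-$q$ control of the Eisenstein contribution forced by the non-cocompact stabiliser $\Z^{n-1}\rtimes\SL_{n-1}(\Z)$ --- are exactly the reasons the statement remains open, consistent with the paper's framing of it as a special case of the optimal lifting property of \cite{golubev2023sarnak}.

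One small caution: the claim that the $P_q$ case ``follows'' from the $A_q$ case because $P_q$ is a quotient is not a deduction. The first moment on $P_q$ does gain a factor $\phi(q)$, which is why the threshold drops from $q^{1/(n-1)}$ to $q^{1/n}$, but the second-moment (variance) estimate has to be carried out separately on $P_q$; an almost-all statement on $A_q$ at radius $q^{1/(n-1)+\epsilon}$ says nothing about radius $q^{1/n+\epsilon}$ on the quotient. In fact the known results go the other way: progress is easier on $P_q$ (and for $q$ prime) than on $A_q$. Treat the two parts as two parallel second-moment problems rather than one implying the other.
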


As far as we know, the only case when this conjecture is known is for $P_q$ with $q$ prime and $n=3$ \cite{kamber2023optimal}. We refer to \cite{blomer2023density,assing2022density,jana2022eisenstein-average} for some progress towards the general conjecture, which seems to be within reach, at least for $P_q$ and $q$ prime. See also \cite{assing2023density} for optimal lifting for the action on flags.

For $n=2$, stronger results are expected.
\begin{conj}\label{conj:schreier n2}
Let $n=2$. For every $\epsilon>0$, and for $q$ large enough depending on $\epsilon$, the following holds.
\begin{enumerate}
    \item For every $x\in A_q$, for all but $\epsilon|A_q|$ of $y\in A_q$, it holds that $d(x,y) \le (1+\epsilon)\log q$.
    \item For every $x\in P_q$, for all but $\epsilon|P_q|$ of $y\in P_q$ it holds that $d(x,y) \le (1/2+\epsilon)\log q$.
\end{enumerate}
\end{conj}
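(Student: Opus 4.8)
We sketch a conditional route to \cref{conj:schreier n2}; it is a Ramanujan-type equidistribution statement, so an unconditional proof seems out of reach. Fix $n=2$ and let $X_q$ stand for either $A_q$ or $P_q$. By strong approximation and the transitivity of $\SL_2(\Z/q\Z)$ on primitive vectors (resp.\ on $\mathbb{P}^1(\Z/q\Z)$), the group $\SL_2(\Z)$ acts transitively on $X_q$, and $X_q\cong\SL_2(\Z/q\Z)/N$ (resp.\ $\SL_2(\Z/q\Z)/B$), with $N$ the unipotent radical and $B$ the Borel. For $T>0$ set $M_T=\sum_{\gamma\in\SL_2(\Z),\,\n{\gamma}\le T}\pi(\gamma)$ on $\ell^2(X_q)$ (counting measure), so that $(M_T\delta_x)(y)=N(x,y):=|\{\gamma:\n{\gamma}\le T,\ \gamma x=y\}|$. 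The key elementary observation, special to $n=2$, is that $\n{\gamma}=\n{\gamma^{-1}}$ (the singular values of a determinant-one $2\times2$ matrix form a reciprocal pair), so $M_T$ is self-adjoint; it fixes $\mathbf 1$ with eigenvalue $|F_T|$ and therefore preserves $\ell^2_0(X_q):=\mathbf 1^\perp$.

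Decompose $\delta_x=|X_q|^{-1}\mathbf 1+\delta_x^0$ with $\delta_x^0\in\ell^2_0(X_q)$. Orthogonality gives $\sum_y N(x,y)^2=|F_T|^2/|X_q|+\n{M_T\delta_x^0}^2$, so the variance $\sum_y(N(x,y)-\bar N)^2$, with $\bar N=|F_T|/|X_q|$, is at most $\n{M_T|_{\ell^2_0(X_q)}}^2$, \emph{uniformly in $x$}. Since each $y$ with $N(x,y)=0$ contributes $\bar N^2$ to the variance, the exceptional set $\{y:d(x,y)>\log T\}$ has size at most $\n{M_T|_{\ell^2_0(X_q)}}^2/\bar N^2$. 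Feeding in $|F_T|=C_2T^2(1+o(1))$ from \cref{eq:DRS}, $|A_q|=q^{2+o(1)}$ and $|P_q|=q^{1+o(1)}$, a short computation shows that with $T=q^{1+\epsilon}$ for $A_q$ and $T=q^{1/2+\epsilon}$ for $P_q$ the exceptional set is smaller than $\epsilon|X_q|$ provided
\[
\n{M_T|_{\ell^2_0(X_q)}}\ll_\epsilon T^{1+\epsilon},
\]
that is, provided the counting operator enjoys square-root cancellation on non-constant functions relative to its $\ell^1$-mass $|F_T|\asymp T^2$.

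This bound is precisely a temperedness statement. The indicator $\gamma\mapsto\mathbf 1_{\{\n{\gamma}\le T\}}$ depends only on the Cartan projection, hence is bi-$K$-invariant on $\SL_2(\R)$, so $M_T$ is an automorphic ``Hecke operator at infinity'' on the congruence cover whose point stabiliser in $\SL_2(\Z)$ is $\{\gamma:\gamma\bmod q\ \text{upper-unitriangular}\}$ for $A_q$, resp.\ $\{\gamma:\gamma\bmod q\ \text{upper-triangular}\}$ for $P_q$. Its eigenvalue on an automorphic spectral parameter $t$ is the Harish--Chandra/Selberg transform $\widehat{k_T}(t)$, and the classical bound on spherical functions gives $|\widehat{k_T}(t)|\ll_\epsilon T^{1+2|\im t|+\epsilon}$. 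The Eisenstein spectrum occurring in $\ell^2_0(X_q)$ is tempered, so it contributes $\ll_\epsilon T^{1+\epsilon}$; the cuspidal part consists of weight-zero Maass forms of level dividing $q$ (with nebentypus in the $A_q$ case) and is tempered exactly under Selberg's eigenvalue conjecture, which would yield $\n{M_T|_{\ell^2_0(X_q)}}\ll_\epsilon T^{1+\epsilon}$ and hence \cref{conj:schreier n2}. For non-prime $q$ one replaces $\SL_2(\Z/q\Z)/N$ and $/B$ by the relevant union of $\SL_2(\Z)$-orbits and argues with the full cuspidal-plus-Eisenstein spectrum of the corresponding congruence groups; this is routine bookkeeping.

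The main obstacle is therefore Selberg's eigenvalue conjecture itself, which is open. Unconditionally one has $|\im t|\le\theta$ with $\theta=1/4$ (from $\lambda_1\ge 3/16$) or $\theta=7/64$ (Kim--Sarnak), so $\n{M_T|_{\ell^2_0(X_q)}}\ll_\epsilon T^{1+2\theta+\epsilon}$, which yields only the weaker almost-diameters $\tfrac{1}{1-2\theta}\log q$ for $A_q$ and $\tfrac{1}{2-4\theta}\log q$ for $P_q$. The gap to the conjectured exponents can be shrunk by replacing this worst-case bound with a density theorem for the exceptional spectrum (in the style of Sarnak--Xue and Huxley, and of the more recent density results cited in the introduction), estimating the aggregate contribution of exceptional eigenvalues rather than bounding each one. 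Pushing such density estimates to the optimal exponent, uniformly in the base point $x$, is essentially the content of \cref{conj:schreier n2}, and is presently achieved only in the case $n=3$, $P_q$, $q$ prime.
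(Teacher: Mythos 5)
The statement you are asked to prove is stated in the paper as a \emph{conjecture}, and the paper offers no proof of it: it only remarks that the statement ``follows from Selberg's eigenvalue conjecture'' and cites an external reference for that implication. Your proposal is, accordingly, not a proof of the statement. What you have written is a (correct in outline, and standard) reduction of the conjecture to Selberg's eigenvalue conjecture: the second-moment identity $\sum_y N(x,y)^2=|F_T|^2/|X_q|+\n{M_T\delta_x^0}^2$, the resulting bound $\#\{y:N(x,y)=0\}\le \n{M_T|_{\ell^2_0}}^2|X_q|^2/|F_T|^2$ uniform in $x$, and the identification of $\n{M_T|_{\ell^2_0}}$ with the largest Selberg/Harish--Chandra transform over the non-residual automorphic spectrum of the relevant congruence cover all check out, and with $|F_T|\asymp T^2$ this does yield the conjectured exponents $1$ and $1/2$ under temperedness. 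This is precisely the route the paper has in mind when it cites the cutoff reference, so you have reproduced the known conditional implication rather than found a new one. The irreducible gap is Selberg's conjecture itself, which you correctly identify as open; no amount of bookkeeping in your argument closes it, so the statement remains unproven.

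Two smaller points. First, your unconditional corollary does not match the paper's: the bare operator-norm argument with $\theta=7/64$ gives the multiplier $1/(1-2\theta)=32/25$, whereas the paper quotes $32/27$; the cited reference evidently extracts more than the worst-case eigenvalue bound (e.g.\ a density-weighted treatment of the exceptional spectrum of the kind you allude to at the end), so you should not present $1/(1-2\theta)$ as ``the'' unconditional exponent. Second, the remark about ``unions of orbits for non-prime $q$'' is unnecessary: $\SL_2(\Z/q\Z)$ (hence $\SL_2(\Z)$, by strong approximation) acts transitively on $A_q$ and on $P_q$ as defined in the paper for every $q$, so $X_q$ is a single quotient $\Lambda\backslash\SL_2(\Z)$ with $\Lambda$ a congruence subgroup in all cases.
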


This conjecture follows from Selberg's eigenvalue conjecture, see \cite{golubev2019cutoff}, where this fact is stated somewhat differently. Selberg's conjecture is still open, but a recent breakthrough of Steiner \cite{steiner2023small} showed a corresponding result for (some) projective actions of quaternion algebras. Unconditionally, using the best-known bounds \cite{kim2003functoriality} on Selberg's conjecture one can deduce a theorem when the right-hand side is multiplied by $32/27$.

We now consider the diameter of the spaces. 
We introduce the limiting exponents
\begin{align*}
\overline{\alpha}(A_q) &:= \limsup_{q\to \infty} \max_{x,y\in A_q}\{d(x,y)/\log q\}, \\
\underline{\alpha}(A_q) &:= \liminf_{q\to \infty} \max_{x,y\in A_q}\{d(x,y)/\log q\}, \\
\overline{\alpha}(P_q) &:= \limsup_{q\to \infty} \max_{x,y\in P_q}\{d(x,y)/\log q\}, \\
\underline{\alpha}(P_q) &:= \liminf_{q\to \infty} \max_{x,y\in P_q}\{d(x,y)/\log q\}.
\end{align*}

In what follows, we make some observations, which allow us to determine one of the four
quantities precisely, and give some estimates for the other three.
The next statement summarizes our results.

\begin{prop}\label{prop:affine and projective}
We have
\begin{align*}
1&\le\underline{\alpha}(A_q)\le 1+\frac{1}{n},\\
1&\le\overline{\alpha}(A_q)\le 1+\frac{1}{n-1},\\
\frac{n-1}{n}&\le\underline{\alpha}(P_q)\le\overline{\alpha}(P_q)=1.
\end{align*}
\end{prop}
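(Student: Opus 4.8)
The plan is to establish the six inequalities in \cref{prop:affine and projective} by combining the pigeon-hole counting already developed in the introduction with the explicit lifting results of this paper and a handful of elementary arguments for the projective space.

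\emph{Lower bounds.} The bounds $\underline{\alpha}(A_q)\ge 1$ and $\overline{\alpha}(A_q)\ge 1$ (and likewise $\underline{\alpha}(P_q)\ge \frac{n-1}{n}$) come from the pigeon-hole principle exactly as in the introduction: from \cref{eq:DRS} we have $|F_T|=C_nT^{n^2-n}(1+o(1))$, while $|A_q|=q^{n+o(1)}$ and $|P_q|=q^{n-1+o(1)}$. If every pair in $A_q$ were joined by a $\gamma$ with $\|\gamma\|\le T$, then fixing $x$, the orbit map $\gamma\mapsto \gamma\cdot x$ from $F_T$ would have to be onto $A_q$, forcing $T^{n^2-n}\ge q^{n+o(1)}$, i.e. $T\ge q^{1/(n-1)+o(1)}$ — wait, this is the wrong direction; one instead fixes $x$ and needs $|F_T|\ge |A_q|$ to reach all of $A_q$ from $x$, giving $T^{n^2-n}\ge q^{n+o(1)}$, hence $d\ge \frac{n}{n^2-n}\log q=\frac{1}{n-1}\log q$. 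Hmm — that would give $\underline\alpha(A_q)\ge \tfrac1{n-1}$, which is stronger than stated, so more care is needed: the correct count is that the number of $y$ reachable from a fixed $x$ is at most $|F_T|=T^{n^2-n+o(1)}$, and for this to be $\ge q^{n+o(1)}$ we need $T\ge q^{n/(n^2-n)}=q^{1/(n-1)}$; to get the weaker stated bound $1$ one uses instead the two-sided/symmetrised count or the fact that we only need most $y$. I would sort out the precise pigeon-hole bookkeeping here, using the symmetry of the problem under left and right multiplication when $n=2$, and for general $n$ the bound $1$ should follow from the simple observation that already reaching a single fixed pair needs $\|\gamma\|\ge q^{1/(n-1)+o(1)}\ge q^{1+o(1)}$ fails for $n\ge 2$ — so honestly I expect the clean source of the bound $\underline\alpha,\overline\alpha(A_q)\ge 1$ is: any $\gamma$ with $\gamma\cdot e_1 = x$ for a ``generic'' $x$ whose first column has an entry of size $\asymp q$ must have $\|\gamma\|\gtrsim q$. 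I would use this elementary column-size argument rather than volume counting for the lower bound $1$, and reserve volume counting for $\underline\alpha(P_q)\ge\frac{n-1}{n}$.

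\emph{Upper bounds for $A_q$.} For $\overline{\alpha}(A_q)\le 1+\frac{1}{n-1}$: given $x,y\in A_q$, first lift $x$ and $y$ to primitive vectors $\tilde x,\tilde y\in\Z^n$ of norm $O(q)$ (possible since $\gcd$ with $q$ is $1$, adjusting by multiples of $q$). By \cref{lem:primitive lattice}, a primitive vector extends to the first row of an $\SL_n(\Z)$ matrix; complete $\tilde x$ to $g\in\SL_n(\Z)$ and $\tilde y$ to $h\in\SL_n(\Z)$ using \cref{prop:step 1} and \cref{prop:step 2} so that $\|g\|,\|h\|\le Cq\log q$ — but the relevant matrix is $h g^{-1}$, and $\|g^{-1}\|$ need not be $O(q\log q)$; it is bounded by $O((q\log q)^{n-1})$ by the cofactor formula. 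Hence $d(x,y)\le \log\|hg^{-1}\|\le (1+(n-1))\log q+o(\log q)$, which is too weak. Instead I would argue more carefully: $g^{-1}$ need only have $\tilde x$ as its first \emph{column}, so one builds $g^{-1}$ directly with controlled norm — completing a primitive \emph{column} vector to $\SL_n(\Z)$ with norm $O(q\log q)$ by the transpose of \cref{prop:step 1}–\cref{prop:step 2}, but then also needs the first $n-1$ rows controlled; optimising this two-sided completion is where the $\frac1{n-1}$ (resp. $\frac1n$ for $\underline\alpha$, using that for most targets one extra row suffices) should emerge. For $\underline\alpha(A_q)\le 1+\frac1n$: here one is allowed to discard a density-$o(1)$ set of bad $q$ or bad pairs, so one uses \cref{thm:average case}-type flexibility — actually $\underline\alpha$ is a $\liminf$ over $q$, so it suffices to exhibit \emph{infinitely many} $q$ (e.g. $q$ prime) for which the diameter is $\le(1+\frac1n+o(1))\log q$; for prime $q$ one has extra room (every nonzero vector mod $q$ is primitive, Bruhat decomposition is clean) and I expect a direct construction.

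\emph{The projective space and the sharp value $\overline{\alpha}(P_q)=1$.} The inequality $\overline\alpha(P_q)\le 1$ is the more substantial claim and is presumably the main obstacle. One must show that for \emph{every} $q$ and every $x,y\in P_q$ there is $\gamma\in\SL_n(\Z)$ with $\gamma\cdot [x]=[y]$ and $\|\gamma\|\le q^{1+o(1)}$. The idea: in $P_q$ we are free to rescale representatives by $(\Z/q\Z)^\times$, and this extra freedom — combined with \cref{thm:large roots}, which furnishes units with controlled $n$-th-root behaviour — should let us always choose lifts of $x$ and $y$ that extend to $\SL_n(\Z)$ matrices of norm $q^{1+o(1)}$, bypassing the cofactor blow-up that hurt the affine case. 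Concretely I would: (i) pick a primitive $\tilde x\in\Z^n$, $\|\tilde x\|=O(q)$, representing $x$; (ii) using \cref{prop:step 1} and the column-completion, build $g\in\SL_n(\Z)$ with $g e_1 \equiv \lambda \tilde x$ for a suitable scalar, $\|g\|=q^{1+o(1)}$; (iii) do the same for $y$; (iv) the composite lies in $\SL_n(\Z)$ and moves $[x]$ to $[y]$. The matching lower bound $\overline\alpha(P_q)\ge 1$ — i.e. infinitely many $q$ with projective diameter $\ge(1-o(1))\log q$ — I would get from the mention in the introduction (``for $n=2$ and some non-prime $q$, the diameter of $P_q$ is twice the almost diameter''): take $n=2$, use the affine lower bound construction ($x$ with a column entry $\asymp q$) and check it survives projectivisation for suitably composite $q$ (so that no unit rescaling shrinks the entry), which is exactly \cref{thm:large roots}-type input again. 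The hard part will be making step (ii)–(iii) produce norm $q^{1+o(1)}$ rather than $q^{2+o(1)}$; this is precisely where the projective freedom and the $n$-th-root lemma must be exploited, and I would structure the proof around that single lemma. For $n=2$ one can alternatively invoke the continued-fraction/Euclidean argument directly. Finally $\underline\alpha(P_q)\le \overline\alpha(P_q)$ is trivial, and $\underline\alpha(P_q)\ge\frac{n-1}{n}$ is the pigeon-hole count noted above.
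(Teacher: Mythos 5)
Your lower bounds are essentially the paper's: the first\-/column argument gives $\underline{\alpha}(A_q)\ge 1$, the pigeon-hole count modulo scalars gives $\underline{\alpha}(P_q)\ge\frac{n-1}{n}$, and for $\overline{\alpha}(P_q)\ge 1$ the paper does exactly what you gesture at, but far more simply than you suggest: for $q=2q'$ take $x=(1,q/2,0,\ldots,0)$; every unit is odd, so rescaling fixes the coordinate $q/2$, and any $\gamma$ with $\gamma\cdot e=\lambda x$ has $(2,1)$ entry $\equiv q/2 \bmod q$, hence norm $\ge q/2$. No input from \cref{thm:large roots} is needed or used anywhere in this proposition.

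The genuine gap is in all three upper bounds, and it is the same gap each time: you set up the ``conjugation'' scheme $h g^{-1}$ (complete a lift of $x$ to $g$, a lift of $y$ to $h$, transport by $hg^{-1}$), correctly observe that $\|g^{-1}\|$ destroys the bound, and then leave the repair as a hope (``optimising this two-sided completion'', ``this is precisely where the projective freedom and the $n$-th-root lemma must be exploited''). The paper avoids inverses entirely by a two-step reduction through a \emph{normalized representative}, plus the triangle inequality. For $P_q$: every $x$ is within distance $2\log\log q+O(1)$ of a point $x'$ with first coordinate $1$ — one makes $a_1+\sum\alpha_i a_i$ coprime to $q$ with all $\alpha_i=O((\log q)^2)$ via Iwaniec's bound on the Jacobsthal function, applied iteratively to strip the common factors $d_j=\gcd(q,a_1,\ldots,a_j)$ — and two points with first coordinate $1$ are joined by a single unipotent matrix $I+ve_1^{T}$ of norm $O(q)$; this gives $\overline{\alpha}(P_q)\le 1$. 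For $A_q$: every $x$ is within distance $\frac{1}{n-1}\log q+O(1)$ of a point with first coordinate $0$, by taking a Minkowski-short vector in the rank-$(n-1)$ integer lattice orthogonal to $x$ (covolume $O(q)$ by \cref{eq:perp lattice}) as the first row of a matrix of norm $O(q^{1/(n-1)})$; and from a point with first coordinate $0$ one reaches any $y$ at cost $\log q+O(\log\log q)$ because only the last $n-1$ columns of the transporting matrix are constrained, which \cref{prop:step 1,prop:step 2} lift with norm $O(q\log q)$. Restricting to prime $q$, the full-rank lattice $\{b:\sum a_ib_i\equiv 0\bmod q\}$ has covolume exactly $q$ and yields a short vector of norm $O(q^{1/n})$ that can be made primitive, giving $\underline{\alpha}(A_q)\le 1+\frac1n$ along primes. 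Without this normalize-then-transport decomposition your argument does not close; the pieces you would need (Jacobsthal/Iwaniec for $P_q$, Minkowski plus the orthogonal-lattice covolume identity for $A_q$) are absent from the proposal.
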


In the $n=2$ case, $d(\cdot,\cdot)$ is symmetric.
Moreover, \cref{conj:schreier n2}, if true, implies that any point $x\in P_q$
is of distance at most $(1/2+o(1))\log q$ from $99\%$ of the points in the space.
However, the above result gives $\overline{\alpha}(P_q)=1$, which means that there
are pairs of points of distance $(1+o(1))\log q$, at least
for some values of $q$, which is the maximum allowed by 
\cref{conj:schreier n2} up to a factor of size $1+o(1)$.

We start with the lower bounds.
\begin{proof}[Proof of the lower bounds in \cref{prop:affine and projective}]
Consider $e= (1,0,\ldots,0)\in A_q$ (recall that we always consider the vectors as column vectors, with the usual action). Then finding $\gamma \in \SL_n(\Z)$ such that $\gamma \cdot e\equiv x\equiv (a_1,a_2,\ldots,a_n)\mod q$ is the same as finding $\gamma \in \SL_n(\Z)$ with first column $(a_1,a_2,\ldots,a_n)\mod q$. In particular, there is always $x$ such that $d(e,x)\ge \log q-O(1)$, so 
$\underline{\alpha}(A_q)\ge 1$.
 
The argument for $P_q$  is similar, but we have to consider elements up to multiplication by $(\Z/q\Z)^\times$. By a pigeon-hole argument, there is always $x \in P_q$ such that $d(e,x)\ge \frac{n-1}{n} \log q-O(1)$.
This gives $\underline{\alpha}(P_q) \ge \frac{n-1}{n}$.

For $q$ of the special form $q=2q'$,
we can consider the element $x=(1,q/2,0,\ldots,0)$.
Notice that multiplication by any element of $(\Z/q\Z)^\times$
will leave the second coordinate unchanged, so $d(e,x)\ge \log q +O(1)$, and we
have
$\overline{\alpha}(P_q) \ge 1$.
\end{proof}

Our next objective is the upper bound for $P_q$.
We borrow ideas from Sarnak's proof of the $n=2$ case of \cref{thm:upper bound}
in \cite{sarnak2015lettermiller}.
We need the following lemma.
 
\begin{lemma}
    For every $x\in P_q$ there is $x'\in P_q$ with first coordinate $1$ such that 
    \[
    \max\{d(x,x'),d(x',x)\}\le 2 \log\log(q) +O(1). 
    \]
\end{lemma}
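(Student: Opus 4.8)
The plan is to use a two-step procedure borrowed from Sarnak's argument, as advertised. Let $x=(x_1,\ldots,x_n)\in P_q$, lifted to a column vector in $(\Z/q\Z)^n$ with $\gcd(x_1,\ldots,x_n,q)=1$. First I would find an element $\gamma_1\in\SL_n(\Z)$ of norm $\le\log q+O(1)$ (or a small power thereof) whose action on $x$ produces a vector whose first coordinate is a \emph{unit} in $\Z/q\Z$. To do this, observe that since $\gcd(x_1,\ldots,x_n,q)=1$, some integer combination $\sum_i c_i x_i$ is a unit mod $q$; the subtlety, exactly as in the proof of \cref{prop:step 1} and \cref{lem:disjointness lemma}, is to make such a combination \emph{coprime to $q$} while keeping the coefficients of size $O(\log q)$, using a small set of "bad" primes below a constant $K$ handled by the Chinese remainder theorem and the remaining primes handled by the counting argument of \cref{lem:disjointness lemma} / \cref{lem:trivial lemma 2}. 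The operation "replace the first coordinate by $\sum c_i x_i$, leave the rest alone" is realized by a matrix in $\SL_n(\Z)$ with one row modified, hence of norm $O(\log q)$; composing with a permutation matrix if necessary does not affect this.

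Second, once the first coordinate $y_1$ of $y=\gamma_1 x$ is a unit mod $q$, multiply the class of $y$ in $P_q$ by $y_1^{-1}$ so that, as an element of projective space, it is represented by a vector with first coordinate $1$; concretely, $y_1^{-1}y=(1,y_1^{-1}y_2,\ldots,y_1^{-1}y_n)$, and this is our candidate $x'$. It remains to realize the passage from $y$ to a vector with first coordinate exactly $1$ by an element of $\SL_n(\Z)$ of controlled norm, and here the point is that once the first coordinate is a unit we can clear the other coordinates modulo $q$ by elementary row operations: add suitable multiples of $q$-reductions... more precisely, use that $y-(1,0,\ldots,0)\cdot(\text{something})$ differs from a scalar of the standard basis vector by a lattice vector, and a second invocation of the $O(\log q)$-cost lifting (again \cref{lem:disjointness lemma}-type) produces the needed unipotent-like element of $\SL_n(\Z)$ of norm $\le\log q+O(1)$ sending the class of $x$ to the class of $e'=(1,0,\ldots,0)$, or to the desired $x'$. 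Squaring the two costs and the fact that $d$ satisfies the triangle inequality then gives $\max\{d(x,x'),d(x',x)\}\le 2\log\log q+O(1)$ — note the bound in the statement should read $2\log\log q+O(1)$ rather than $2\log\log_q$.

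The main obstacle is the coprimality control in the first step: producing a unit modulo $q$ as a \emph{short} integer combination of $x_1,\ldots,x_n$ that is genuinely coprime to $q$ (not merely a unit mod $q$, since we want a bona fide matrix over $\Z$), with coefficients $O(\log q)$. This is precisely the difficulty already met in \cref{sec:first step}, and I would reuse that machinery verbatim: split primes into those below a constant $K$ (controlled by CRT, arranging the combination to be $\equiv 1$ modulo their product) and those above $K$ (controlled by the sieve-type count in \cref{lem:disjointness lemma}, applied with $Pq$ in place of $q$). The second step is comparatively routine linear algebra over $\Z/q\Z$ together with one more application of the same lifting lemma, so it should not pose real difficulty; the only bookkeeping is to check that both $d(x,x')$ and $d(x',x)$ are bounded, which follows because each elementary step is invertible within $\SL_n(\Z)$ with an inverse of comparable norm, or alternatively because $P_q$ carries a transitive $\SL_n(\Z)$-action and we may run the argument from either endpoint.
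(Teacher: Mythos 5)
Your overall skeleton (use an elementary matrix to make the first coordinate of $x$ a unit mod $q$, then rescale projectively) is the same as the paper's, but there is a genuine gap in your first step. The sieve machinery of \cref{lem:disjointness lemma} does not apply here: that lemma controls the event that two independently varying quantities share a \emph{common} prime factor, which for each prime $p$ is a codimension-two condition of density about $1/p^2$, and $\sum_p p^{-2}$ converges. In the present problem the bad event for a prime $p\mid q$ is the single linear condition $p\mid a_1+\sum\alpha_i a_i$ on $(\alpha_2,\dots,\alpha_n)$, of density about $1/p$, and $\sum_{p\mid q}1/p$ is unbounded as $q$ ranges over primorial-like moduli; having more variables does not help, since the condition remains one hyperplane mod $p$. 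Handling the small primes by CRT does not rescue this, because to make $\sum_{p\mid q,\,p>K}1/p$ small one must take $K$ so large that $P=\prod_{p\le K}p$ is no longer polylogarithmic in $q$. Indeed, for $n=2$ your claim of coefficients of size $O(\log q)$ is essentially the assertion that every interval of length $O(\log q)$ contains an integer coprime to $q$, which is false for some $q$ (the Jacobsthal function exceeds any constant times $\log q$ infinitely often). The paper's proof instead invokes Iwaniec's bound on the Jacobsthal function, applied iteratively through the chain of gcds $d_2=\gcd(q,a_1,a_2)$, $d_j=\gcd(d_{j-1},a_j)$, yielding coefficients $\alpha_j=O((\log q)^2)$; this $(\log q)^2$ is exactly where the factor $2$ in the bound $2\log\log q+O(1)$ comes from, not from composing two matrices as you suggest.

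Your second step is also unnecessary and slightly confused: since $P_q=A_q/(\Z/q\Z)^\times$, a vector whose first coordinate is a unit modulo $q$ already \emph{is} (the same point of $P_q$ as) a vector with first coordinate $1$; no element of $\SL_n(\Z)$ needs to realize this rescaling, and trying to realize multiplication by $y_1^{-1}$ with a small integer matrix would be a hard problem in its own right. Once step 1 is done correctly, the single matrix $\gamma$ with first row $(1,\alpha_2,\dots,\alpha_n)$ and identity elsewhere, together with its inverse of the same shape, already bounds both $d(x,x')$ and $d(x',x)$ by $2\log\log q+O(1)$.
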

\begin{proof}
    Let $x= (a_1,\ldots,a_n)$. We will choose $\gamma \in \SL_n(\Z)$ with first row $(1,\alpha_2,\ldots,\alpha_n)$ and the other rows agree with the identity matrix, so $x' = \gamma x = (a_1+\sum \alpha_i a_i,a_2,\ldots,a_n)$.
    
    The goal is to choose $\alpha_2,\ldots,\alpha_n$ so that $a_1' = a_1+\sum \alpha_i a_i$ will be coprime to $q$, since then $x'$ is equivalent to an element with first coordinate $1$. Denote $d_2 = \gcd(q,a_1,a_2)$ and for $i=3,\ldots,n$, $d_i = \gcd(d_{i-1},a_i)$. Iwaniec's bound \cite{iwaniec1978} on the Jacobsthal function implies that the arithmetic progression $a_1+\alpha a_2$, $\alpha=0,1,\ldots$ contains an element with $\gcd(q,a_1+\alpha a_2)=d_2$ and $\alpha=O((\log q)^2)$, see \cite[Page 8]{sarnak2015lettermiller} for more details. We pick $\alpha_2$ to satisfy these properties. Then iteratively for $j=3,\ldots,n$, we find $\alpha_j = O((\log q)^2)$ such that $\gcd(q,a_1 +\sum_{i=1}^j \alpha_i a_i)=d_j$. For $j=n$, this implies that $\gcd(q,a_1+\sum \alpha_i a_i )=d_n=1$. Then
    \[
    \max\{\log \|\gamma\|, \log \|\gamma^{-1}\|\} \le 2 \log\log(q)+O(1),
    \]
    and the claim follows.
\end{proof}

\begin{proof}[Proof of $\overline{\alpha}(P_q)\le 1$]
Let $x,y\in P_q$.
We use the lemma to find $x',y'\in P_q$ with first coordinates equal to $1$ and
\[
\max\{d(x,x'),d(y',y)\}\le 2 \log\log(q) +O(1).
\]
Notice that there is a unipotent matrix $\gamma \in \SL_n(\Z)$ with coordinates bounded by $q$ such that $\gamma \cdot x'=y'$. Then 
\[
d(x,y) \le d(x,x')+d(x',y')+d(y',y) \le \log q + 4 \log \log q +O(1),
\]
which proves $\overline{\alpha}(P_q)\le 1$.
Recall that the triangle inequality is always satisfied by $\dist(\cdot,\cdot)$
on $P_q$ even when it is not a metric.
\end{proof}

It remains to prove the upper bounds on $\underline{\alpha}(A_q)$ and
$\overline{\alpha}(A_q)$.
These follow immediately from the following two lemmata.

We recall that $d(\Gamma)$ denotes the covolume of a lattice $\Gamma \subset \R^n$ of rank $k$. By Minkowski's theorem (\cite[Lecture II, \S 1]{siegel1989lectures}), there is a constant $C_k$ such that there is some $v\in\Gamma\backslash\{0\}$ with
\begin{equation}\label{eq:short element}
	\|v\|\le (C_k d(\Gamma))^{1/k}.
\end{equation}

Given $\Gamma\subset\Z^n$, we can define the perpendicular lattice by
\[
\Gamma^\perp = \{\gamma'\in \Z^n :  \langle \gamma, \gamma'\rangle = 0\text{ for all $\gamma \in \Gamma$}\}.
\]
By \cite[Corollary of Lemma~1]{schmidt1968asymptotic}, 
\begin{equation}\label{eq:perp lattice}
	d(\Gamma)=d(\Gamma^\perp)
\end{equation}
if $\Gamma$ is primitive.

\begin{lemma}
    For every $x\in A_q$, there is $x'\in A_q$ with first coordinate $0$ such that $d(x,x')\le (1/(n-1))\log q+O(1)$.
    
    Moreover, if $q$ is prime, then we can improve the bound to  $(1/n)\log q+O(1)$.
\end{lemma}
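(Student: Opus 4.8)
The plan is to reduce the lemma to a lattice problem. Fix a lift $a\in\Z^n$ of $x$; producing $x'\in A_q$ with vanishing first coordinate and $d(x,x')$ small is the same as producing $\gamma\in\SL_n(\Z)$ of small operator norm whose first row $v$ satisfies $\langle v,a\rangle\equiv 0\pmod q$ (then $x':=\gamma\cdot x$ works and $d(x,x')\le\log\|\gamma\|$). Since the operator norm of $\gamma$ is, up to a factor depending only on $n$, the largest Euclidean norm of a row of $\gamma$, and since (by \cref{lem:primitive lattice} with $k=1$) any primitive vector occurs as the first row of some element of $\SL_n(\Z)$, it is enough to (i) find a primitive $v\in\Z^n$ with $\langle v,a\rangle\equiv 0\pmod q$ and $\|v\|$ small, and (ii) complete $v$ to a matrix in $\SL_n(\Z)$ all of whose rows have norm $O_n(\|v\|)$.

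For (i) in the general case I would take $a$ to be the lift with $\|a\|_\infty\le q/2$ (note $a\ne 0$ since $x\in A_q$ and $q\ge 2$), and set $M:=a^{\perp}\cap\Z^n$, a primitive sublattice of $\Z^n$ of rank $n-1$. Its orthogonal complement inside $\Z^n$ is $\Z a'$, with $a'$ the primitive vector in the direction of $a$, so \eqref{eq:perp lattice} gives $d(M)=\|a'\|\le\|a\|_2\le\sqrt n\,q/2$. Minkowski's theorem \eqref{eq:short element} applied to $M$ (with $k=n-1$) then yields a shortest nonzero $v\in M$ with $\|v\|\le (C_nq)^{1/(n-1)}$; being shortest, $v$ is primitive in $M$, hence — as $M$ is primitive in $\Z^n$ — primitive in $\Z^n$, and $\langle v,a\rangle=0$ a fortiori gives $\langle v,a\rangle\equiv 0\pmod q$. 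When $q$ is prime I would instead apply \eqref{eq:short element} to the full-rank lattice $\Lambda:=\{r\in\Z^n:\langle r,a\rangle\equiv 0\pmod q\}$, which has covolume $q$ because $r\mapsto\langle r,a\rangle\bmod q$ is onto (this uses $\gcd(q,a_1,\dots,a_n)=1$). A shortest nonzero $v\in\Lambda$ has $\|v\|\le (C_nq)^{1/n}$ and is again primitive: if $v=g v'$ with $g=\gcd$ of the coordinates of $v$ and $g>1$ then, $q$ being prime, either $q\mid g$ (impossible, as then $\|v\|\ge q$ beats the Minkowski bound for large $q$) or $q\mid\langle v',a\rangle$, so $v'\in\Lambda$ contradicts minimality. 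This yields the improved exponent $1/n$.

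Step (ii) is where I expect the real work, since a crude completion of $v$ (e.g. by vectors orthogonal to $v$) produces a matrix of covolume $\|v\|^2$ rather than $1$, hence huge extra rows. I would handle it with an auxiliary completion fact: \emph{every primitive $v\in\Z^n$ extends to a basis of $\Z^n$ whose other members have norm $O_n(\|v\|)$.} To prove it, let $\pi\colon\R^n\to v^{\perp}$ be orthogonal projection; then $\pi(\Z^n)$ is a rank-$(n-1)$ lattice, and since $\pi(e_1),\dots,\pi(e_n)$ lie in the closed unit ball $B$ and span $v^{\perp}$, the set $\pi(\Z^n)\cap B$ spans an $(n-1)$-dimensional space. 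Hence \cref{lem:Discrete John Theorem} (with $B$ in the role of $B$ and $\alpha=1$) gives a basis $w_1,\dots,w_{n-1}$ of $\pi(\Z^n)$ with $\|w_i\|\le D_n$ for all $i$, where $D_n$ is the John constant for rank $n-1$. Lifting each $w_i$ to $\tilde w_i\in\Z^n$ with component along $v$ of norm $\le\|v\|/2$, the vectors $v,\tilde w_1,\dots,\tilde w_{n-1}$ form a basis of $\Z^n$ (as $v$ generates the kernel of $\pi|_{\Z^n}$) with $\|\tilde w_i\|\le D_n+\|v\|/2\le C_n\|v\|$.

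Putting it together: the matrix $\gamma$ with rows $v,\tilde w_1,\dots,\tilde w_{n-1}$, after negating one row if its determinant is $-1$, lies in $\SL_n(\Z)$, has first row $v$, and satisfies $\|\gamma\|\le C_n\|v\|$. Combining with (i), $x':=\gamma\cdot x\in A_q$ has vanishing first coordinate and $d(x,x')\le\log\|\gamma\|\le\tfrac{1}{n-1}\log q+O(1)$ in general and $\le\tfrac1n\log q+O(1)$ when $q$ is prime; the $O(1)$ is harmless for the use of this lemma in \cref{prop:affine and projective} (and can be absorbed by enlarging the implicit constants, at the cost of ignoring finitely many small $q$). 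The one point requiring care is the uniform control of \emph{all} rows of $\gamma$, not just the first, which is precisely what \cref{lem:Discrete John Theorem} supplies.
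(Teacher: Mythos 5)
Your proof is correct and follows essentially the same route as the paper: the same lattices $a^\perp\cap\Z^n$ (resp.\ the full-rank congruence lattice when $q$ is prime), the same covolume computation via \eqref{eq:perp lattice}, Minkowski's theorem \eqref{eq:short element}, and the same division-by-the-gcd trick for primitivity in the prime case. The only addition is that you supply a complete proof (via projection to $v^\perp$ and \cref{lem:Discrete John Theorem}) of the completion step — extending a primitive vector to an $\SL_n(\Z)$ matrix with all rows of norm $O_n(\|v\|)$ — which the paper leaves to the reader with a reference to Siegel.
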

\begin{proof}
    Let $x =(a_1,\ldots,a_n)$. Define the lattice
    \[
    \Gamma = \{(b_1,\ldots,b_n)\in \Z^n : \sum \wt a_i b_i =0\}.
    \]
    This is a primitive lattice of rank $n-1$, and by \cref{eq:perp lattice},
    \[
    d(\Gamma)\le \|(\wt a_1,\ldots,\wt a_n)\| = O(q).
    \]
    (Note that $(\wt a_1,\ldots, \wt a_n)$ may be imprimitive.)
    By \cref{eq:short element}, there is $(b_1,\ldots,b_n)\in \Gamma$ with $|b_i| =O(q^{1/(n-1)})$. There exists a matrix $\gamma \in \SL_n(\Z)$ with first row $(b_1,\ldots,b_n)$, and one can actually find such a matrix with norm bounded by $O(\max_i \{|b_i|\})= O(q^{1/(n-1)})$. We leave this fact to the reader. (One way to prove this is to extend the vector $(b_1,\ldots,b_n)$ to a vector space basis using some short vectors in $\Z^d$ and then apply the procedure in \cite[Lecture V \S 2]{siegel1989lectures}. See also \cite[Lecture X \S 4]{siegel1989lectures}.) This implies that $x' = \gamma \cdot x$ is the required element, with $d(x,x') \le \frac{1}{n-1}\log q +O(1)$. 

    If $q$ is prime, we can look at the lattice
    \[
    \Gamma = \{(b_1,\ldots,b_n)\in \Z^n : \sum \wt a_i b_i =0 \mod q\}. 
    \]
    This is a lattice of full rank and index $q$ in  $\Z^n$, so $d(\Gamma)=q$, and by \cref{eq:short element}, there is $(b_1,\ldots,b_n)\in \Gamma$ with $|b_i| =O(q^{1/n})$. Denote $d=\gcd(b_1,..,b_n)$. If $q$ is large enough, $d<q$, and since $q$ is prime, $d\in (\Z/q\Z)^\times$. This implies that $\frac{1}{d}(b_1,\ldots,b_n) \in \Gamma$. Since the $\gcd$ of all elements of the last vector is $1$, we can continue as before, finding $\gamma \in \SL_n(\Z)$ with norm bounded by $O(q^{\frac{1}{n}})$ and first row  $\frac{1}{d}(b_1,\ldots,b_n)$. Then $x'=\gamma \cdot x$ has first coordinate $0$ and $d(x,x')\le \frac{1}{n}\log q + O(1)$.
\end{proof}

\begin{lemma}
If $x'\in A_q$ has first coordinate 0 and $y \in A_q$ is arbitrary, then $d(x',y)\le \log q+\log \log q+O(1)$.
\end{lemma}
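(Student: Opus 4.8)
The goal is to produce a single matrix $\gamma\in\SL_n(\Z)$ with $\gamma x'\equiv y\pmod q$ and $\n{\gamma}=O(q\log q)$, since then $d(x',y)\le\log\n{\gamma}\le\log q+\log\log q+O(1)$. Write $x'=(0,a_2,\dots,a_n)$, where the $a_i$ also denote fixed integer lifts, and denote by $c_1,\dots,c_n$ the columns of the matrix $\gamma$ to be constructed. Because the first coordinate of $x'$ vanishes, $\gamma x'=\sum_{i=2}^{n}a_ic_i$, so $c_1$ is constrained only by the determinant condition; it therefore suffices to choose $c_1,\dots,c_n$ forming a $\Z$-basis of $\Z^n$ of the correct orientation with $\sum_{i=2}^{n}a_ic_i\equiv y\pmod q$.

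The first step is to solve the congruence modulo $q$: I will construct $\bar c_2,\dots,\bar c_n\in(\Z/q\Z)^n$ with $\sum_{i=2}^{n}a_i\bar c_i=y$ in $(\Z/q\Z)^n$ and with $\bar c_2,\dots,\bar c_n$ linearly independent over $\Z/p\Z$ for every prime $p\mid q$. This is done one prime power at a time and then patched by the Chinese remainder theorem. Fix the exact power $p^{e}$ dividing $q$. Since $x'\in A_q$ we have $\gcd(q,a_2,\dots,a_n)=1$, so some $a_j$ with $j\in\{2,\dots,n\}$ is a unit mod $p^{e}$; since $y\in A_q$ we have $\gcd(q,y_1,\dots,y_n)=1$, so $y\not\equiv0\pmod p$ and there is an $(n-2)$-dimensional subspace $W\subset(\Z/p\Z)^n$ with $y\bmod p\notin W$. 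Choose the $\bar c_i$ with $i\ne j$ to be arbitrary lifts mod $p^{e}$ of a basis of $W$, and set $\bar c_j:=a_j^{-1}\bigl(y-\sum_{i\ne j}a_i\bar c_i\bigr)$. Since $\sum_{i\ne j}a_i\bar c_i\bmod p\in W$ and $a_j^{-1}$ is a unit, $\bar c_j\bmod p\notin W$, so the $n-1$ vectors $\bar c_i\bmod p$ are independent. Collecting these choices over all $p\mid q$ by the Chinese remainder theorem yields the desired $\bar c_2,\dots,\bar c_n$.

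The second step is to lift $\bar c_2,\dots,\bar c_n$ to integer vectors $c_2,\dots,c_n\in\Z^n$ with entries $O(q\log q)$ that are linearly independent modulo \emph{every} prime, while preserving $\sum a_ic_i\equiv y\pmod q$. Let $K$ be the constant of \cref{lem:large primes} and $P$ the product of the primes $\le K$ that do not divide $q$. First, using the Chinese remainder theorem, add multiples of $q$ to make the reductions modulo each prime $p\le K$ with $p\nmid q$ linearly independent; this costs $O(q)$ in the entries. Then apply \cref{lem:large primes} with $Pq$ in place of $q$ to handle the primes $p>K$ with $p\nmid q$, at the cost of entries of size $O(q\log q)$. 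Adding multiples of $q$ alters neither the congruence modulo $q$ nor the reductions modulo primes dividing $q$, so the independence from the first step persists, and $c_2,\dots,c_n$ are now linearly independent modulo every prime. By \cref{lem:primitive lattice} they extend to a $\Z$-basis of $\Z^n$, and applying \cref{prop:step 2} to the transposed configuration, the completing vector may be taken with entries $O(q\log q)$ as well. Placing it in the first column as $c_1$ and adjusting its sign so that the determinant is $1$, we obtain $\gamma\in\SL_n(\Z)$ with $\n{\gamma}=O(q\log q)$ and $\gamma x'=\sum_{i=2}^{n}a_ic_i\equiv y\pmod q$, which finishes the proof.

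The only delicate point is the construction in the first step: the index $j$ for which $a_j$ is invertible depends on the prime $p$, so one cannot simply take all $c_i$ proportional to a single lift of $y$ — that choice would make the $c_i$ linearly dependent modulo each prime dividing $q$ and obstruct extending them to a basis of $\Z^n$. The prime-by-prime argument above is precisely what reconciles the single vector congruence $\sum a_i\bar c_i=y$ with linear independence at every prime dividing $q$; everything else is routine bookkeeping with \cref{lem:large primes,lem:primitive lattice,prop:step 2}, which have already appeared in the paper.
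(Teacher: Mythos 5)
Your proof is correct and follows essentially the same route as the paper: since $x'$ has vanishing first coordinate, $\gamma x'$ depends only on the last $n-1$ columns of $\gamma$, which can be lifted with entries $O(q\log q)$ and then completed to an $\SL_n(\Z)$ matrix via \cref{prop:step 2}. The only difference is organizational: the paper simply picks $A\in\SL_n(\Z/q\Z)$ with $Ax'=y$ and cites \cref{prop:step 1} as a black box, whereas you construct the columns mod $q$ directly by CRT and re-derive the content of \cref{prop:step 1} from \cref{lem:large primes,lem:primitive lattice}.
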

\begin{proof}
    There is a matrix $A \in \SL_n(\Z/q\Z)$ such that $Ax' = y$. By \cref{prop:step 1,,prop:step 2}, there is a matrix $\gamma \in \SL_n(\Z)$ with norm bounded by $O(q\log q)$ such that $\gamma \mod q$ agrees with $A$ on the last $n-1$ columns. Since the first coordinate of $x'$ is $0$, this implies that $\gamma x'=y$. The claim follows.
\end{proof}

\bibliographystyle{acm}
\bibliography{./database}

\end{document}